  \documentclass[12pt]{amsart}
\usepackage[utf8]{inputenc}
\usepackage[OT2,T1]{fontenc}
\DeclareSymbolFont{cyrletters}{OT2}{wncyr}{m}{n}
\usepackage{enumerate, amssymb, amsmath, amsthm}
\usepackage{amscd}
\usepackage{graphicx}
\usepackage{hyperref}
\usepackage{xcolor}
\hypersetup{
    colorlinks=true,
    linkcolor=blue,
    filecolor=blue,      
    urlcolor=blue,
    citecolor=magenta
}
 
\usepackage[a4paper,hmargin=2.5cm,vmargin=2.5cm]{geometry}
\usepackage{marginnote}
\usepackage[foot]{amsaddr}
\usepackage[english]{babel}
\graphicspath{ {images/} }
\usepackage{wrapfig,textcomp}
\usepackage[matrix,arrow,curve]{xy}\usepackage{mathabx}
\usepackage[OT2, T1]{fontenc}
\usepackage{tabularx}

\usepackage[a4paper,hmargin=2.5cm,vmargin=2.5cm]{geometry}
\usepackage[english]{babel}
\graphicspath{ {images/} }
\usepackage{wrapfig}
\usepackage[matrix,arrow,curve]{xy}\usepackage{mathabx}
\usepackage{booktabs}
\usepackage{array}

\newtheorem{prop}{Proposition}[section]
\newtheorem{thrm}[prop]{Theorem}
\newtheorem{lemma}[prop]{Lemma}
\newtheorem{cor}[prop]{Corollary}
\newtheorem{mainthm}{Theorem}

\theoremstyle{definition}
\newtheorem{df}{Definition}
\newtheorem{ex}{Example}
\newtheorem{rmk}{Remark}

\newtheorem{q}{Question}

\renewcommand{\phi}{\varphi}
\renewcommand{\epsilon}{\varepsilon}

\renewcommand{\C}{\mathbb C}
\newcommand{\Z}{\mathbb Z}
\newcommand{\Q}{\mathbb Q}
\newcommand{\R}{\mathbb R}

\renewcommand{\H}{\mathbb H}

\renewcommand{\O}{\mathcal O}

\newcommand{\E}{\mathcal E}

\renewcommand{\i}{\sqrt{-1}}
\renewcommand{\o}{\otimes}
\newcommand{\di}{\partial}

\newcommand{\dibar}{\overline{\partial}}
\newcommand{\im}{\operatorname{im}}
\renewcommand{\ker}{\operatorname{ker}}

\newcommand{\rk}{\operatorname{rk}}

\newcommand{\Supp}{\operatorname{Supp}}

\newcommand{\CP}{\mathbb{C}\mathbf{P}}

\newcommand{\Ker}{\operatorname{Ker}}
\newcommand{\Tot}{\operatorname{Tot}}
\newcommand{\id}{\operatorname{id}}
\newcommand{\Grp}{\operatorname{Gr}_{++}}
\newcommand{\arr}{\xrightarrow}
\newcommand{\eps}{\varepsilon}
\DeclareMathSymbol{\Bl}{\mathalpha}{cyrletters}{"59}

\renewcommand{\U}{\operatorname{U}}

\newcommand{\Kn}{\operatorname{Kn}}
\newcommand{\ev}{\operatorname{ev}}
\newcommand{\vel}{\operatorname{vel}}

\newcommand{\Diff}{\operatorname{Diff}}
\newcommand{\F}{\mathbf F}
\usepackage{eucal}

\begin{document}
\title{Holomorphic families of knots}

\author{Rodion D\'eev}

\author{Vasily Rogov}

\begin{abstract}
Let $(M, [g])$ be a $3$-dimensional conformal manifold. The space of knots $\Kn(M)$ in $M$ is an infinite-dimensional manifold that is  known to carry an almost complex structure. This structure is formally integrable by a result of Brylinski. We study finite dimensional holomorphic submanifolds in $\Kn(M)$. We define an holomorphic family of knots in $(M, [g])$ parametrised by a finite-dimensional complex manifold $(X, I_X)$, and construct several series of examples. We show that the base $(X, I_X)$ is K\"ahler, and if $X$ is compact, it is a projective variety of complex dimension at most $2$. In this case the conformal structure $[g]$ uniquely determines the complex structure $I_X$ and vice versa. We prove that if an holomorphic family of knots in $(M, [g])$ over a compact base $(X, I_X)$ defines a foliation on $\mathbf{S}(TM)$, then $(X, I_X) \simeq \CP^1 \times \CP^1$, the manifold $(M, [g])$ is conformally equivalent to either $S^3$ or $\R\mathbf{P}^3$ with round metric, and all knots are geodesic in some round metric in the class.

MSC classes: 53C28, 32L25.
\end{abstract}

\maketitle
\vspace{-10mm}
\tableofcontents

\newpage

\section{Introduction}

\subsection{Geometry of the space of knots}\label{infinite-subsection}

Let $M$ be a smooth oriented $3$-dimensional manifold. By a {\it knot} we mean 
a smooth embedding $\gamma \colon S^1 \to M$ viewed up to orientation-preserving 
reparametrisation (we often denote both the map $S^1 \to M$ and its image by the 
same letter $\gamma$). The quotient $\Kn(M) := \operatorname{Emb}(S^1, M)/\Diff^{+}(S^1)$ 
is called the {\it space of knots} in $M$. Its topology is the key object of 
study in the theory of Vassiliev invariants (\cite[Chapter~15]{CDM}). Different 
geometric structures on $M$ are known to yield geometric structures on this 
space, viewed as an infinite-dimensional (Fr\'echet) manifold. Their 
correspondence may be summarised as follows:
\begin{table}[h]
\centering
\renewcommand{\arraystretch}{1.6}
\begin{tabular}{>{\raggedright\arraybackslash}p{75mm} | 
                >{\raggedright\arraybackslash}p{75mm}}
\toprule
\textbf{Geometric datum on $M^3$} & \textbf{Geometric datum on $\Kn(M)$} \\
\midrule
volume form $\nu \in \Omega^3(M,\mathbb{R})$ & 
    symplectic form $\omega_\nu$ on $\Kn(M)$ \\[4pt]
surface $Y^2 \subset M^3$ & 
    Lagrangian submanifold $\Kn(Y) \subset \Kn(M)$ \\[4pt]
conformal structure $[g]$ & 
    almost complex structure $I_{[g]}$ \\[4pt]
Riemannian metric $g$ & 
    almost K\"ahler structure on $\Kn(M)$ \\[4pt]
point $m \in M$ & 
    divisor $D_m = \{\gamma \in \Kn(M) \mid m \in \gamma\}$ \\
\bottomrule
\end{tabular}
\label{tab:knot-correspondence}
\end{table}

Surprisingly, the finite-dimensional complex submanifolds of $\Kn(M)$ and the 
geometric structures on $M$ they correspond to appear to have never been studied. 
The present paper aims to fill this gap. In particular, we never actually work 
with infinite-dimensional manifolds, and in fact we exercise this privilege right 
away by deflecting the inquisitive reader's questions (undoubtedly already raised) 
of the kind: ``What does it mean for a $2$-form to have non-degenerate top 
exterior power on an infinite-dimensional manifold?'', ``What do you mean by a 
submanifold of half the infinite dimension?'', or even ``Well, how does one 
commute vector fields up there?''. For formal definitions and constructions of 
some of the correspondences in Table~\ref{tab:knot-correspondence}, 
see~\cite[Chapter~III]{Br2}; the reader will also see, in the course of this 
paper, how all these structures arise on finite-dimensional submanifolds of $M$.

In order to do this, we introduce the notion of a \emph{family of knots in $M$ parametrised by a base $X$}, which morally corresponds to a smooth embedding 
$X \hookrightarrow \Kn(M)$ (see Definition~\ref{definition of a family}). What 
follows in the present section is a purely motivational exposition, which will 
nonetheless be quite useful to keep in mind later on.

The key observation is that a conformal structure on $M$ determines a complex 
structure operator on $\Kn(M)$. Indeed, the tangent space to $\Kn(M)$ at a 
knot $\gamma$ is the space of sections of its normal bundle:
\[
T_{[\gamma]} \Kn(M) = \Gamma(S^1, N_{\gamma/M}).
\]
Fix a conformal structure $[g]$ on $M$. It induces a splitting of the exact 
sequence
\begin{equation} \label{basic short}
0 \to  d\gamma(TS^1) \to TM|_{\gamma} \to N_{\gamma/M} \to 0
\end{equation}
via the identification $N_{\gamma/M} \xrightarrow{\sim} d\gamma(TS^1)^{\perp_{[g]}}$. 
The rank-$2$ vector bundle $N_{\gamma/M}$ then inherits a conformal structure and 
an orientation (recall that both $d\gamma(TS^1)$ and $TM$ are oriented). A 
conformal structure together with an orientation on a real plane is the same 
datum as an isomorphism with $\mathbb{C}$, so this turns $N_{\gamma/M}$ into a 
complex line bundle over $S^1$, and thus gives a complex structure operator on 
the space of sections $\Gamma(S^1, N_{\gamma/M})$.

In simple terms: the complex structure operator acts on a normal vector field 
by rotating it around the knot by angle $\frac{\pi}{2}$ in the positive 
direction. 
 
Pointwise application of this construction leads to an almost complex structure operator $I_{[g]}$ on the space $\Kn(M)$. Brylinski showed that it is formally integrable.

\begin{thrm}[Brylinski, \cite{Br2}\footnote{It is worthy to mention that authors like Lempert \cite{Lemp} and Wang \cite{Wang} ascribe this theorem to Drinfeld and LeBrun, nevertheless citing Brylinski's preprints. This likely stems from someone's false memory, since Prof.~Drinfeld confirmed to the authors that his contribution to the infinite-dimensional K\"ahler geometry was related to the Kirillov--Yuriev space ${\Diff^+}(S^1)/\mathrm{U}(1)$, while the complex structure on the quotient {\it by} the ${\Diff^+}(S^1)$-action was something he was not even aware of.}]\label{formal integrability}
Let $(M, [g])$ be a conformal $3$-manifold. There is a splitting $T\Kn(M) \otimes \C = T^{1,0}\Kn(M) \oplus T^{0,1}\Kn(M)$ such that $I_{[g]}$ acts on them by multiplication by $\i$ and $-\i$, respectively. Moreover,
\[
[T^{0,1}M, T^{0,1}M] \subseteq T^{0,1}M.
\]
\end{thrm}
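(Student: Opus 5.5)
The plan is to realise $\Kn(M)$ locally as a quotient of a space of maps and to reduce involutivity of $T^{0,1}$ to the involutivity of a distribution on a finite-dimensional manifold. (The displayed inclusion is meant for $T^{0,1}\Kn(M)$.) Let $\pi\colon \mathbf{S}(TM)\to M$ be the unit sphere bundle, defined up to scaling by $[g]$, i.e. the bundle of oriented directions. The key object is the \emph{twistor-type} correspondence used by LeBrun. Consider the space $\mathcal{P}$ of oriented directions along knots. A knot $\gamma$ lifts canonically to $\tilde\gamma\colon S^1\to \mathbf{S}(TM)$ via $t\mapsto \dot\gamma/|\dot\gamma|$. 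A normal vector field $v\in\Gamma(S^1,N_{\gamma/M})$ is of type $(0,1)$ iff, pointwise, $v(t)\in N_{\gamma(t)}\otimes\C$ lies in the $-\i$ eigenline of the rotation $J_t$ by $\pi/2$ about $\dot\gamma(t)$. Equivalently, $v(t)$ is a null vector $w\in T_{\gamma(t)}M\otimes\C$ with $g(w,w)=0$, orthogonal to $\dot\gamma$ and of a fixed orientation type. The set of such null lines, as $u$ ranges over $\mathbf{S}(T_mM)$, is exactly the $(0,1)$-tangent space of the sphere $\mathbf{S}(T_mM)\cong\CP^1$ at $u$ (null lines in $T_mM\otimes\C$ correspond to points of the conic).

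First I would write $T^{0,1}_{[\gamma]}\Kn(M)=\{v\in\Gamma(S^1,N\otimes\C): v(t)\in L_{\tilde\gamma(t)}\}$. Here $L\subset \pi^*TM\otimes\C$ is the complex line bundle on $\mathbf{S}(TM)$ whose fibre over $(m,u)$ is the null line orthogonal to $u$ with the prescribed orientation. This $L$ depends only on $[g]$. Next I would lift the problem to the space of parametrised embeddings $\operatorname{Emb}(S^1,M)$. There I define the distribution $\mathcal{D}=\{v\in \Gamma(S^1,\gamma^*TM\otimes\C): v(t)\in L_{\tilde\gamma(t)}+\C\dot\gamma(t)\}$. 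It is invariant under $\Diff^+(S^1)$, contains the vertical directions $\C\dot\gamma$, and projects to $T^{0,1}\Kn(M)$. Hence involutivity of $T^{0,1}$ reduces to involutivity of $\mathcal{D}$ modulo the reparametrisation directions, and it suffices to compute the brackets of vector fields of the form $V=\big(\gamma\mapsto f(\tilde\gamma(t))\big)$, where $f$ is a local section of $L$ or of $\C u$ over $\mathbf{S}(TM)$. Such "local" vector fields span $\mathcal{D}$ pointwise, and tensoriality of the Frobenius form reduces the bracket computation to them.

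For such fields, the bracket at $\gamma$ is computed pointwise in $t$. If $V$ is induced by $f$, its flow moves $\gamma$, and the variation of $\tilde\gamma$ involves $\dot v$, i.e. the derivative along the curve. A direct computation gives $[V_f,V_h](t)=D_{V_f}h-D_{V_h}f$, evaluated using the first jet of $\gamma$, where the derivative of $h$ in the fibre direction of $\mathbf{S}(TM)$ is taken along the variation of $u$ induced by $\frac{d}{dt}f/|\dot\gamma|$ projected orthogonally to $u$. So the needed statement is finite-dimensional: for sections $f,h$ of $L\oplus\C u$, the expression above lies in $L\oplus \C u$. Two terms must be controlled. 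The $M$-direction derivatives: choosing a metric $g$ in the class with Levi-Civita connection $\nabla$, the term $\nabla_f h-\nabla_h f$ lands in $L\oplus\C u$ because $L\oplus\C u$ is the complexified $u^\perp$-null plane. The relevant condition is that the span of a null vector $w$ and $u$ with $w\perp u$ is closed under torsion-free differentiation up to $u$, using $g(w,w)=0$. The fibre-direction derivatives: since $f\in L$ has $(0,1)$ type with respect to the complex structure of the $\CP^1$ fibre, moving $u$ in a direction belonging to $L$ moves $L_u$ holomorphically-antiholomorphically in a compatible way. This is because $u\mapsto L_u$ is antiholomorphic as a map $\CP^1\to$ conic, so the fibre derivative of $h$ in direction $f$ stays in $L\oplus\C u$.

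The main obstacle will be this last point: checking that the $\dot v$-dependent terms, which come from the derivative along the knot and have no finite-dimensional analogue, cancel modulo $L\oplus\C u$. Concretely, $\frac{d}{dt}$ of a section of $L$ along $\tilde\gamma$ produces a component along $\bar L$, and I must show that its antisymmetrisation in $f,h$ vanishes. My first attempt would be a moving-frame computation. I would pick an adapted frame $(u, w, \bar w)$ along $\gamma$ with $g(w,\bar w)=1$, write $f=a w+\alpha u$, $h=bw+\beta u$, and compute the $\bar w$-component of the bracket. I expect it to be $\propto(a\dot b-b\dot a)\,g(w,w)$ plus curvature-free terms involving $g(w,w)$, which vanish since $w$ is null; this is where rank one of $L$ and nullity are used essentially. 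Conformal invariance then follows because $L$ and the whole construction depend only on $[g]$.
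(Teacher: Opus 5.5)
The paper does not prove this statement; it quotes it from Brylinski. The only related argument in the text is the later appeal (Section~\ref{CR-section}) to Lempert's identification of $\Kn(M)$ with positive transverse knots in LeBrun's CR twistor space. Your framework is sound and close in spirit to that twistor picture: lifting knots to $\mathbf{S}(TM)$, encoding $T^{0,1}$ by the null line bundle $L$, passing to $\operatorname{Emb}(S^1,M)$ with the $\Diff^+(S^1)$-invariant distribution $\mathcal{D}$, and reducing by tensoriality to the fields $V_f$.

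The gap is in the one step that carries the content. You never actually show $[V_f,V_h]\in L\oplus\C u$, and the mechanism you anticipate is wrong. Your claim that $\nabla_t$ of a section of $L$ along $\tilde\gamma$ acquires a $\bar L$-component is false. It also contradicts your preceding paragraph, where you need the fibre direction of the variation generated by $V_f$ to lie in $L$; that direction is $|\dot\gamma|^{-1}(\nabla_t f)^{\perp}$, not $f$. The justifications you give for the horizontal and fibre terms (torsion-freeness; antiholomorphicity of $u\mapsto L_u$) are not what makes them work either. The cancellation you predict, of the form $(a\dot b-b\dot a)\,g(w,w)$, is not what actually happens.

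The missing observation is metric compatibility plus rank one. Write $h=bw$ with $w$ a local frame of $L$. Since $g(w,w)\equiv 0$ for the complex-bilinear extension of $g$, the following holds for every complexified direction $X$ on $\operatorname{Emb}(S^1,M)$, and equally for $\nabla_t$ along the knot:
\[
g(\nabla_X w, w)=\tfrac12\, X\, g(w,w)=0 .
\]
Hence $\nabla_X h=(Xb)w+b\nabla_X w\in w^{\perp}=\C w\oplus\C u=L\oplus\C u$. The equality $w^\perp=\C w\oplus\C u$ is exactly where $\dim M=3$ enters. Torsion-freeness is needed only for $[V_f,V_h]=\nabla_{V_f}V_h-\nabla_{V_h}V_f$. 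The three cases then close as follows.

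For $f,h\in\Gamma(L)$, each of the two terms lies in $L\oplus\C u$ separately. No horizontal/vertical splitting, antisymmetrisation, or holomorphicity of $u\mapsto L_u$ is involved.

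For $f\in\Gamma(L)$ and $h=\beta u$, one has $\nabla_{V_f}u=|\dot\gamma|^{-1}(\nabla_t V_f)^{\perp}\in L$, by the same nullity argument applied along $t$. So both terms again lie in $L\oplus\C u$.

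For $f=\alpha u$ and $h=\beta u$, the only terms leaving $L\oplus\C u$ are $\beta\alpha|\dot\gamma|^{-1}\nabla_t u$ and $\alpha\beta|\dot\gamma|^{-1}\nabla_t u$, and they cancel. Equivalently, both fields are tangent to the reparametrisation orbits.

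So the sole $\bar L$-contribution in the whole computation is the curvature term $\nabla_t u$, entering through the $u$-components, and it cancels by symmetry in $\alpha,\beta$. With this step inserted, your argument proves the theorem.
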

We refer the reader once again to \cite{Br2} for both the definition of the Lie bracket of vector fields on $\Kn(M)$ and the proof of the theorem.

Brylinski's theorem does not imply that $\Kn(M)$ is a complex manifold in the usual sense of an atlas with holomorphic transition functions, as Newlander--Nirenberg theorem fails in infinite dimensions. Quite the opposite, Lempert proved (\cite{Lemp}) that $I_{g}$ does not come from any structure of a complex analytic Fr\'echet manifold on $\Kn(M)$. Nevertheless, an immediate corollary of Theorem \ref{formal integrability} is the following. Suppose that $X \subset \Kn(M)$ is a finite-dimensional manifold such that $TX \subset T\Kn(M)$ is preserved by $I_{[g]}$. Then $(X, I_{[g]})$ is a complex manifold.

The goal of this paper is to study complex manifolds that arise this way, with special attention to the case where $X$ is compact.

Various other aspects of geometry of $\Kn(M)$ and its close relatives are studied in \cite{Has, MW, LeBrun93, Lemp, Verb, LL, FL}

\subsection{Main results}

First thing that we do in this paper is define finite-dimensional holomorphic families of knots in a conformal manifold $(M, [g])$ without referring to the infinite-dimensional space $\Kn(M)$. Roughly speaking a family of knots is given by a tuple $\mathcal{X}=(X_{\bullet}, X, \pi, \ev)$, where $X$ is a smooth manifold, $\pi \colon X_{\bullet} \to X$ and orientable $S^1$-bundle and $\ev \colon X_{\bullet} \to M$ a smooth map that is injective on the fibers of $\pi$ (\emph{evaluation map}). We also impose some non-degeneracy conditions, see Defintion \ref{definition of a family}. 

For a fixed conformal structure $[g]$ on $M$ and a complex structure $I_X$ on $X$, we define what does it mean for $\mathcal{X}$ to be holomorphic (Definiton \ref{df holomorphic family}). We give multiple examples  of such families and prove several general results about them.

Our first result says that the base of an holomorphic family of knots is always K\"ahler, and projective if compact. This is perharps not so surprising, as the infinite-dimensional space $\Kn(M)$ is itself ``K\"ahler'' in a certain sense (\cite[Proposition 3.6.1]{Br2}).

\begin{mainthm}[Theorem \ref{kaehlerness of families}]\label{main kaehlerness} Let $(M, [g])$ be a conformal $3$-manifold and $(X_{\bullet}, X, I_X, \pi, \ev)$ an holomorphic family of knots in $(M, [g])$. Then $(X, I_X)$ is K\"ahler. If $M$ and $X$ are compact, $(X, I_X)$ is biholomorphic to the analytification of a complex projective variety.
\end{mainthm}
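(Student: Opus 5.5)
The plan is to build the Kähler form on $X$ by pulling back Brylinski's symplectic form on $\Kn(M)$ along the family, and then to show it is compatible with $I_X$. Fix a metric $g$ in the conformal class and let $\nu = \mathrm{vol}_g$. The symplectic form $\omega_\nu$ on $\Kn(M)$ from Table~\ref{tab:knot-correspondence} is, at a knot $\gamma$,
\[
\omega_\nu(u,v) = \int_\gamma \nu(\dot\gamma, u, v)
\]
for normal fields $u,v$. For the family this is a fibre integral: define
\[
\omega_X := \pi_*\,\ev^*\nu \in \Omega^2(X).
\]
Since $\pi$ is an oriented $S^1$-bundle and $d\nu=0$, fibre integration commutes with $d$ up to sign, so $d\omega_X = \pm\pi_*\ev^*d\nu = 0$ and $\omega_X$ is closed.

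Next I will check that $\omega_X$ is positive of type $(1,1)$ with respect to $I_X$. Take $x\in X$, $v\in T_xX$, and let $\gamma=\ev(\pi^{-1}(x))$. The tangent vector $v$ gives a section $\xi_v$ of $N_{\gamma/M}$, and holomorphicity of the family (Definition~\ref{df holomorphic family}) says $\xi_{I_Xv} = J\xi_v$. Here $J$ is rotation by $\pi/2$ in the oriented conformal normal plane. Pointwise we have $\nu(\dot\gamma, u, Ju) = |\dot\gamma|\,|u|^2 \ge 0$, with equality only where $u=0$. So $\omega_X(v, I_Xv) = \int_\gamma |\dot\gamma|\,|\xi_v|^2\,dt$. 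This is strictly positive when $v\ne 0$, because the non-degeneracy condition in Definition~\ref{definition of a family} makes $v\mapsto\xi_v$ injective. $J$-invariance of the pairing $\nu(\dot\gamma,\cdot,\cdot)$ on the normal plane (it is the area form, and $J$ is a rotation) gives $\omega_X(I_Xv,I_Xw)=\omega_X(v,w)$. Integrability of $I_X$ is already given, via the corollary of Theorem~\ref{formal integrability}, or directly from the definition. Hence $(X,I_X,\omega_X)$ is Kähler.

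For projectivity, when $M$ and $X$ are compact, I would use the Kodaira embedding theorem. It suffices to make the class $[\omega_X]$ rational, or to find some rational Kähler class. Take instead $\nu$ a closed $3$-form with $\int_M\nu\in\Q$; on a compact oriented $3$-manifold, $H^3(M,\R)\cong\R$, so rescaling makes $[\nu]$ integral. Then $[\omega_X] = \pi_*\ev^*[\nu]$ lies in the image of $H^3(X_\bullet,\Z)\to H^2(X,\Z)$ under integration along the fibre, so it is an integral class. This uses only that the scalar rescaling $\lambda\,\mathrm{vol}_g$ with $\lambda>0$ keeps positivity. Kodaira then gives a projective embedding, and Chow/GAGA identifies $X$ with the analytification of a projective variety.

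The main obstacle is the positivity step. I need to verify carefully that the holomorphicity condition really identifies $I_X$ with $J$ on normal sections, modulo tangent directions along the knot: $d\ev$ of a lift of $v$ has a component along $\dot\gamma$ that must be discarded, and $\nu(\dot\gamma,\cdot,\cdot)$ kills it automatically. I also need that $v\mapsto\xi_v$ is injective, which I expect to be exactly one of the non-degeneracy conditions in the definition of a family. A smaller concern is the orientation signs in fibre integration.
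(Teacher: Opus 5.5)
Your proposal is correct and is essentially the paper's own proof. The paper likewise takes $\omega_g = R(\nu_g) = \pi_*\ev^*\nu_g$ and gets positivity of $\omega_g(v, I_X v)$ from the positive orientation of $(\delta(v), \dot\gamma_x\times_g\delta(v), \dot\gamma_x)$. In the compact case it rescales $\nu_g$ to an integral class, so that $[\omega_g]$ is integral and Kodaira's embedding theorem applies; your explicit check that $\omega_X$ is $I_X$-invariant (since $J$ is a rotation preserving the normal area form), from which symmetry and non-degeneracy of $\omega_X(\cdot, I_X\cdot)$ follow, fills in a step the paper leaves implicit.
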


The next theorem is a bit more unexpected and can be thought of as a bound on the dimension of compact holomorphic submanifolds of $\Kn(M)$.

\begin{mainthm}[Theorem \ref{at most two}]\label{main at most two}
Let $(M, [g])$ be a conformal $3$-manifold and $(X_{\bullet}, X, I_X, \pi, \ev)$ an holomorphic family of knots in it. If $X$ is compact, $\dim_{\C}X \le 2$.
\end{mainthm}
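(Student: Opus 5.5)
We bound $\dim_{\C} X$ by restricting the family to the knots through a fixed point of $M$ and using the fact that a positive line bundle on $X$ cannot have a large zero locus.

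\textbf{Step 1: the divisors $D_m$.} For a point $m\in M$, consider the locus
\[
D_m=\{x\in X \mid m\in \ev(\pi^{-1}(x))\},
\]
the finite-dimensional trace of the divisor $D_m$ from Table~\ref{tab:knot-correspondence}. Near a generic point of $D_m$, the condition $m\in\gamma_x$ is cut out as follows. Pick a point $p$ on the knot, identify the normal plane $N_{\gamma,p}$ with $\C$ via $[g]$, and project $\ev$ onto it. The condition becomes the vanishing of one function $f\colon X\to N_{\gamma,p}\simeq\C$.

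The holomorphicity condition (Definition~\ref{df holomorphic family}) says that $d\ev$, composed with the projection to the normal bundle, is complex linear from $(TX,I_X)$ to $(N,I_{[g]})$. This should make $f$ holomorphic after the correction coming from moving $p$ along the knot. So $D_m$ is a complex hypersurface, i.e.\ it has real codimension $2$ (if $\ev$ restricted to $D_m\times S^1$ near $m$ is submersive). I would also check that $\ev$ being submersive onto $M$ forces $D_m\neq\emptyset$ for $m$ in an open set.

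\textbf{Step 2: intersecting divisors.} Take $k=\dim_{\C}X$ generic points $m_1,\dots,m_k$ and form $D_{m_1}\cap\dots\cap D_{m_k}$. Since all $D_m$ are homologous, this is a top intersection number $[D]^k$. By Theorem~\ref{kaehlerness of families}, the Kähler form on $X$ is built from the $D_m$ (e.g.\ via averaging currents of integration over $m\in M$). Hence $[D]$ is a Kähler class and $[D]^k>0$, so $k$ generic points of $M$ must lie on a common knot of the family.

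\textbf{Step 3: dimension count.} For $k=3$, three generic points are hit by some knot, and this gives a contradiction. The set of triples of points lying on a common knot is the image of the fibre-cube $X_\bullet\times_X X_\bullet\times_X X_\bullet\to M^3$. That space has real dimension $2k+3$, which for $k=3$ is $9=\dim M^3$, so a pure dimension count does not rule it out.

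So the count alone fails, and the argument must use holomorphicity more seriously. Restrict to $Z=D_{m_1}\cap D_{m_2}$, which has complex dimension $k-2$. The knots in $Z$ all pass through $m_1$ and $m_2$.

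\textbf{Step 4 (main obstacle): rigidity of knots through two fixed points.} Compactness of $X$ gives that $Z$ is a compact complex subvariety. Its tangent vectors are normal fields $v$ along $\gamma$ that vanish at $m_1,m_2$ and satisfy a first-order $\dibar$-type condition along $\gamma$ compatible with $I_{[g]}$. Viewing sections of the complex line bundle $N_\gamma$ over $S^1$ as boundary values of holomorphic data on the family, I would show that such a $v$ vanishing at two points has winding constraints that force it to vanish identically. Equivalently, one shows that $D_{m}$ restricted to $Z$ is trivial, which would force $\dim Z=0$ and hence $k\le 2$.

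The main obstacle is making this vanishing statement rigorous. I would first try an argument via positivity of $[D]$ restricted to $Z$: if $\dim Z\ge1$, then $[D]|_Z$ is Kähler, so a third generic point $m_3$ lies on a knot in $Z$. That yields a $(k-2)$-dimensional compact family of knots through two fixed points sweeping out an open set of $M$. I would then derive a contradiction from compactness near $m_1$: the tangent directions at $m_1$ of the knots in $Z$ give a holomorphic map from $Z$ to $\mathbf{S}(T_{m_1}M)\simeq\CP^1$. Analysing this map and its fibres, the tangent directions at both $m_1$ and $m_2$ together should determine the knot, and I would try to conclude from this.
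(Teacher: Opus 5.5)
\textbf{There is a genuine gap.} Step 4, which you flag as the main obstacle, carries the entire content of the theorem, and neither route you suggest can close it.

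\emph{The infinitesimal route is not viable.} $N_\gamma$ is a complex line bundle over a real circle, so there is no $\dibar$-type condition on its sections. The tangent space of the family at $\gamma$ is merely a finite-dimensional $I_{[g]}$-invariant subspace of $\Gamma(N_\gamma)$. In fact the local claim is false. In the non-compact holomorphic family $\mathcal{U}$ of all round circles in $S^3$, the circles through two fixed points form a positive-dimensional complex family. So there are non-zero tangent fields vanishing at two points.

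\emph{The second route fails for the same reason, more sharply.} It only uses information intrinsic to $Z=D_{m_1}\cap D_{m_2}$. In $\mathcal{U}$ one has $k=3$. There $Z$, the oriented circles through $m_1,m_2$, becomes the oriented lines through a point of $\R^3$ after stereographic projection from $m_1$. So $Z$ is a compact curve $\simeq\CP^1$ that sweeps out all of $S^3$, and the tangent direction at $m_1$ maps it bijectively onto $S_{m_1}$. Everything you derive about $Z$ is therefore consistent with $k=3$. Moreover, even if the directions at $m_1$ and $m_2$ determined the knot, injectivity of $Z\to\CP^1\times\CP^1$ would only give $k\le 4$.

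\textbf{The missing idea.} What you need is the opposite of injectivity: the velocity map at $m_1$ must be \emph{constant} on connected components of $Z$. This requires compactness of all of $X$, not just of $Z$. The paper obtains it by letting the second point collide with the first.
\begin{enumerate}
\item Fix a generic $m$ and work in the compact K\"ahler manifold $D_m$ with the holomorphic map $\vel\colon D_m\to S_m\simeq\CP^1$.
\item For $z$ on a knot $\gamma\in D_m$, the loci $D_{m,z}=D_m\cap D_z$ are homologous in $D_m$ and have constant mass (Wirtinger).
\item As $z\to m$ along $\gamma$, limits of knots through $m$ and $z$ are knots through $m$ tangent to $\gamma$. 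So a Federer--Fleming limit current is supported on the union $D_{m,\pm\xi}$ of the two fibres of $\vel$ over $\pm\xi$, where $\xi=\dot\gamma(m)$.
\item Hence $\int_{D_{m,z}}\vel^*\omega_{FS}=0$, so $\vel$ is locally constant on $D_{m,z}$. The component of $D_{m,z}$ through $\gamma$ is therefore the component $E$ of $D_{m,\xi}$ through $\gamma$, for every $z\in\gamma$.
\item Thus every knot in $E$ contains all of $\gamma$, so $E=\{\gamma\}$ by injectivity of the moduli map. This gives $\dim_{\C}X-2=\dim D_{m,\xi}=0$.
\end{enumerate}
In $\mathcal{U}$ this argument breaks exactly because circles through $m$ and $z$ that are not nearly tangent to $\gamma$ shrink to points and leave the family, so mass escapes.

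Your Steps 1--2 are essentially correct but are not where the difficulty lies. The positivity $[D]^k>0$ plays no role in the argument above.
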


In subsection \ref{examples in sphere subsec} we produce many holomorphic families of knots in a round sphere parametrized by complex projective surfaces (i.e. $\dim_{\C} X=2$). It turns out that this situation is quite specific.

First of all, we prove that if $(X, I_X)$ is a compact complex surface parametrising holomorphic family of knots in $(M, [g])$, the complex structure $I_X$ uniquely determines the conformal structure $[g]$ and vice versa (Theorem \ref{two-complex-structures-proposition}).

Second, we show that under some mild assumptions every holomorphic compact family of knots of complex dimension $2$ comes from the family of geodesics on the round sphere. This result can be considered as a converse of theorems of Ono \cite{Ono} and Hitchin \cite{Hit}, as we will know explain. 

Assume that $(M, g)$ is a compact Riemannian $3$-manifold that is \emph{Zoll}, i.e.~all its geodesics are closed and of the same minimal period. Then the geodesic flow determines a foliation with closed orbits $\mathcal{F}$ on the spherisation of the tangent bundle $\mathbf{S}M$ and the quotient $X=\mathbf{S}M/\mathcal{F}$ is a smooth manifold. There is a natural family of knots in $M$ parametrised by $X$ (\emph{a priori} not holomorphic).

\begin{thrm}[Ono, \ {\cite{Ono}}]\label{ono thrm}
If $M$ is a $3$-sphere, $X$ is homeomorphic to $S^2 \times S^2$. 
\end{thrm}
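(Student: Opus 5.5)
We need to prove Ono's theorem: for a Zoll metric on $S^3$, the space of geodesics $X=\mathbf{S}M/\mathcal{F}$ is homeomorphic to $S^2\times S^2$. This is a cited result, but we sketch a proof.

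The plan is to analyse $X$ as the base of a circle fibration and to pin down its topology by algebraic topology. First, $\mathbf{S}M = \mathbf{S}(TS^3)\cong S^3\times S^2$, since $S^3$ is parallelisable. The Zoll condition (all geodesics closed with the same minimal period) makes the geodesic flow a free $S^1$-action on $\mathbf{S}M$. Hence $p\colon S^3\times S^2\to X$ is a principal $S^1$-bundle over a closed smooth $4$-manifold. Moreover, $X$ is oriented and carries a symplectic form, the reduction of $d\alpha$ where $\alpha$ is the contact form on $\mathbf{S}M$.

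The next step is to compute the homotopy type of $X$ via the long exact sequence of the fibration $S^1\to S^3\times S^2\to X$. Since $\pi_1(S^3\times S^2)=0$, we get that $\pi_1(X)$ is a quotient of $\pi_0(S^1)=0$, so $X$ is simply connected. For $\pi_2$ we have the exact sequence
\[
0\to \pi_2(S^3\times S^2)=\Z\to \pi_2(X)\to \pi_1(S^1)=\Z\to \pi_1(S^3\times S^2)=0,
\]
so $\pi_2(X)\cong\Z^2$. By Hurewicz, $H_2(X)\cong\Z^2$, and hence $b_2(X)=2$. Freedman's classification of simply connected closed topological $4$-manifolds then reduces the claim to showing that the intersection form on $H_2(X)$ is the even form $H=\begin{pmatrix}0&1\\1&0\end{pmatrix}$, rather than $\pm I$, $\mathrm{diag}(1,-1)$, or $-H$. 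Even forms have the Kirby–Siebenmann invariant determined, since in the even case it equals $\sigma/8 \bmod 2$, which is $0$ here.

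The main obstacle is determining the intersection form. I would use the Gysin sequence of the circle bundle, with Euler class $e\in H^2(X)$:
\[
H^0(X)\xrightarrow{\cup e}H^2(X)\to H^2(S^3\times S^2)\to H^1(X)=0 .
\]
Since $H^2(S^3\times S^2)=\Z$ and $H^2(X)=\Z^2$, the class $e$ is primitive; in fact $\Z^2/\langle e\rangle\cong\Z$. Continuing the sequence,
\[
H^1(S^3\times S^2)=0\to H^0(X)\to\cdots,
\]
and further on we get $H^2(X)\xrightarrow{\cup e}H^4(X)\to H^4(S^3\times S^2)=0$. So cupping with $e$ maps onto $H^4(X)=\Z$, while $H^3(X)=0$ gives $e\cup e=0$, because $H^4(X)\to H^4(S^3\times S^2)$ is injective only in a way that kills $e^2$. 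Concretely, the next segment of the Gysin sequence,
\[
0=H^3(S^3\times S^2)\ \text{portion}\ \to H^2(X)\xrightarrow{\cup e}H^4(X),
\]
forces the appropriate ranks and leaves an isotropic primitive vector $e$ with $e^2=0$.

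A unimodular rank-$2$ form that contains a primitive isotropic vector is either $H$ or $\mathrm{diag}(1,-1)$. To exclude the odd case, I would show $w_2(X)=0$. The idea is that $p^*TX\oplus\R\oplus\R\cong T(S^3\times S^2)\oplus\R$ is stably trivial-ish, with $w_2(S^3\times S^2)=0$, and $p^*\colon H^2(X;\Z_2)\to H^2(S^3\times S^2;\Z_2)$ has kernel spanned by $e \bmod 2$. So $w_2(X)\in\{0,e\}$ mod $2$. The possibility $w_2=e$ would then be ruled out by Wu's formula, since $e\cdot e=0$ is consistent with $w_2\cdot x=x\cdot x$ only when the form is even on the complement of $e$. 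Alternatively, one can compare with the round metric: the space of Zoll metrics is connected to the round one through the geodesic-flow deformation, and for the round metric $X=\widetilde{\mathrm{Gr}}_2(\R^4)\cong S^2\times S^2$. This parity step is where I expect the real difficulty to lie, and it is the step where the argument above is least secure.
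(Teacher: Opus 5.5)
The paper gives no proof of this statement (it is quoted from Ono), so your sketch has to stand on its own. Your reduction to Freedman's classification is fine: $\pi_1(X)=0$, $H_2(X)\cong\Z^2$, and $w_2(X)\in\{0,\bar e\}$ with $\bar e=e \bmod 2$, which follows from $T(S^3\times S^2)\cong p^*TX\oplus\R$ and the mod~$2$ Gysin sequence. The first gap is the claim $e\cup e=0$, which is false. The segment $0=H^3(X)\to H^3(S^3\times S^2)\to H^2(X)\xrightarrow{\cup e}H^4(X)\to H^4(S^3\times S^2)=0$ only says that $\cup e$ is onto with rank-one kernel. That holds for every primitive class in a unimodular lattice, and it does not say that $e$ lies in the kernel. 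In fact $e$ never does: $\tfrac{2\pi}{L}\alpha$ is a connection form for the free circle action, so $d\alpha=p^*\omega$ with $[\omega]=\pm L\,e$, hence $e^2=L^{-2}\int_X\omega^2\neq 0$. For the round metric, $\mathbf{S}S^3=V_2(\R^4)$ is the unit circle bundle of $\O(-1)$ over the quadric $Q\cong\CP^1\times\CP^1\subset\CP^3$, so $e=\pm h|_Q$ and $e^2=2$. The reduction to ``$H$ or $\operatorname{diag}(1,-1)$ via a primitive isotropic vector'' therefore rests on a false premise. Indefiniteness can be recovered from the symplectic form you mention but never use: $b_2^+\ge 1$, and the almost complex parity condition $1-b_1+b_2^+\equiv 0 \pmod 2$ forces $b_2^+=1$.

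The decisive gap is the parity step, and it cannot be closed with the data you use. The Wu-formula remark is vacuous: if $w_2=\bar e$, Wu only gives $x\cdot x\equiv x\cdot e \pmod 2$, which is no contradiction. That case genuinely occurs. On $\CP^2\#\overline{\CP^2}$, with $h$ the line class and $E$ the exceptional class, take the circle bundle with Euler class $e=3h-E$; this class is ample, primitive, and $\equiv w_2 \pmod 2$. Its total space is simply connected, spin, with $H_2=\Z$, hence is $S^3\times S^2$ by Smale. By Boothby--Wang it carries a contact form whose Reeb flow is the circle action, and even $c_1(\ker\alpha)=p^*c_1(X)=p^*e=0$, exactly as for the unit cotangent bundle of $S^3$. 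So no argument built only from a Reeb-type free circle action on $S^3\times S^2$ over a symplectic base can separate $S^2\times S^2$ from $\CP^2\#\overline{\CP^2}$. Concretely, $c_1(\ker\alpha)=0$ gives $c_1(X)=ke$, and $c_1^2=2\chi+3\sigma=8$ leaves exactly $(|k|,e^2)=(2,2)$ or $(1,8)$. Excluding $k=\pm1$ needs information about the geodesics themselves: a Maslov-index computation along the closed geodesics, whose Morse index is $2$ as on the round sphere by Bott--Samelson theory. This yields $c_1(X)=\pm 2e$, hence $w_2(X)=0$, an even intersection form, and $X\approx S^2\times S^2$ by Freedman. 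That input is the substance of the cited theorem and is missing from your proposal. Your fallback of deforming to the round metric is not available either, since it presupposes that the space of Zoll metrics on $S^3$ is connected, which is not a known result.
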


Theorem \ref{ono thrm} essentially covers all geodesic families of knots: it is not hard to show that if a manifold admits Zoll metric, then its fundamental group is either trivial or equals $\Z/2\Z$. By Thurston's uniformisation, $M$ is either a sphere or $\R\mathbf{P}^3$. In the latter case, the space of geodesics is again isomorphic to $S^2 \times S^2$.

Ono's theorem does not speak about the complex structure (we should warn the reader that paper \cite{Ono} also considers certain symplectic form on $X$; this form \textbf{does not} coincide with the K\"ahler form given by Theorem \ref{main kaehlerness}). The complex structure on the space of geodesics appears in the work of Hitchin \cite{Hit}.

\begin{thrm}[Hitchin, {\cite{Hit}}]\label{hitchin thrm}
Let $(M, g)$ be a $3$-dimensional orientable Riemannian manifold. Then the space of geodesics admits a canonical almost complex structure that is integrable if and only if $g$ has constant sectional curvature.
\end{thrm}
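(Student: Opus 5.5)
We need to sketch a proof of Hitchin's theorem: the space of oriented geodesics of an oriented Riemannian 3-manifold carries a canonical almost complex structure, which is integrable exactly when $g$ has constant sectional curvature.

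\textbf{Defining the almost complex structure.} Work locally, on a neighbourhood in the space $X$ of oriented geodesics where $X$ is a smooth $4$-manifold, namely the quotient of $\mathbf{S}M$ by the geodesic flow. A tangent vector to $X$ at a geodesic $\gamma$ is a normal Jacobi field $J$ along $\gamma$. Such a field is determined by its initial data $(J(0), J'(0)) \in N_{\gamma(0)} \oplus N_{\gamma(0)}$.

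The oriented normal plane $N_{\gamma(t)} = \dot\gamma(t)^{\perp}$ carries the rotation $R_{\pi/2}$, as in the construction of $I_{[g]}$ in Section~\ref{infinite-subsection}. This rotation commutes with parallel transport along $\gamma$. Hitchin's structure is the pointwise rotation of the normal vector, exactly as above, but now applied to the \emph{pair} of initial data. Concretely, set
\[
I(J(0),J'(0)) = (R J(0), R J'(0)).
\]
Equivalently, $I$ takes $J$ to the unique Jacobi field with initial data $(RJ(0), RJ'(0))$.

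We must check that this is independent of the base point $t$. In the splitting $N \oplus N$, the Jacobi equation $J'' + \mathcal R(J) = 0$ has the form
\[
\tfrac{d}{dt}\begin{pmatrix} J\\ J'\end{pmatrix} = \begin{pmatrix}0&1\\ -\mathcal R_t&0\end{pmatrix}\begin{pmatrix} J\\ J'\end{pmatrix},
\]
where $\mathcal R_t = R(\cdot,\dot\gamma)\dot\gamma$ is a symmetric endomorphism of $N$. The operator $R\oplus R$ commutes with this flow only if $R$ commutes with $\mathcal R_t$, which forces $\mathcal R_t$ to be scalar. So $I$ is not well defined for general $g$, and the canonical structure must instead be defined at a chosen base point.

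The way out is to use the splitting $T(\mathbf{S}M) = \mathbb R\,\text{(flow)} \oplus H \oplus V$ into horizontal and vertical parts. Define an almost complex structure $\tilde I$ on $T(\mathbf{S}M)/\mathbb R$ by rotating both $H \cong N$ and $V \cong N$. The space of geodesics through each point is then an $I$-holomorphic curve. Descending $\tilde I$ to the quotient is exactly the question of flow invariance. I therefore expect the ``canonical almost complex structure'' to be a twistor-type structure, $I = R \oplus (-R)$ or $R \oplus R$ on $H \oplus V$, whose descent is guaranteed by flow-invariance of the spinor/conformal data. The flow-invariance check will be the first computation, and I would fix the signs there.

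\textbf{Integrability.} Next compute the Nijenhuis tensor $N_I$ on the 4-manifold $X$. The plan is to use Cartan's structure equations on the frame bundle. Choose an orthonormal coframe on $\mathbf{S}M$ with $e_3 = \dot\gamma$ the flow direction. This gives forms $\theta^1, \theta^2$ (horizontal normal) and $\omega^1, \omega^2$ (connection, i.e. vertical), with
\[
d\theta^i = -\omega^i \wedge \theta^3 + \dots, \qquad d\omega^i = K_{ij}\,\theta^j\wedge\theta^3 + \dots
\]
The candidate $(1,0)$-forms are $\zeta_1 = \theta^1 + \i\,\theta^2$ and $\zeta_2 = \omega^1 + \i\,\omega^2$ (up to the sign convention fixed above). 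These are basic up to the $U(1)$ rotation, so they descend.

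Integrability means that $d\zeta_1$ and $d\zeta_2$ lie in the ideal generated by $\zeta_1, \zeta_2$. For $d\zeta_1$ this is automatic from torsion-freeness. For $d\zeta_2$, the curvature term splits as
\[
\mathcal R_{\bar\zeta\bar\zeta}\,\bar\zeta_1\wedge\bar\zeta_1\text{-type term},
\]
whose coefficient is the trace-free part of $\mathcal R_t$ on $N$, plus terms with $R(e_1,e_2)$ components. Vanishing of the $(0,2)$-part for every unit direction $\dot\gamma$ then says that the restriction of the sectional curvature to every plane containing $\dot\gamma$ is isotropic. In dimension 3 this is equivalent to pointwise constant sectional curvature, and Schur's lemma (which holds in dimension $\ge 3$) upgrades it to constant curvature. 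Conversely, for constant curvature the obstruction term vanishes identically.

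\textbf{Main obstacle.} The hardest step is the first one: pinning down the correct definition and signs so that $\tilde I$ is genuinely invariant under the geodesic flow for every metric, making the structure canonical. I would first try $I = R \oplus R$ on $H \oplus V$ and check invariance via $\mathcal L_{e_3}$ of the coframe. If that fails, I would switch to the complex structure defined by the span of $\theta^1 + \i\theta^2$ and $\omega^1 + \i\omega^2$, which is manifestly $U(1)$-invariant, and then check that the flow preserves this span modulo $\theta^3$.
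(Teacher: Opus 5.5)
There is a genuine gap, and it sits where you put your ``main obstacle''. The paper does not reprove this theorem. It reads Hitchin's structure as the restriction of $I_{[g]}$ from $\Kn(M)$, i.e.\ pointwise rotation $R$ of normal Jacobi fields by $\pi/2$. It reads the theorem, understood locally, as: $RJ$ is again a Jacobi field for every Jacobi field $J$ if and only if $g$ has constant curvature.

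Your first computation already contains the core of that argument. Since $R$ is parallel along $\gamma$, $(RJ)''+\mathcal R_t(RJ)=[\mathcal R_t,R]J$ with $J(t)$ arbitrary. So the condition is that the symmetric operator $\mathcal R_t$ on $\dot\gamma^\perp$ is scalar for every geodesic. Then all planes through a given line have equal sectional curvature, and any two planes in $T_xM$ share a line. Hence the curvature is pointwise constant, and Schur finishes.

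Instead, you treat this as a defect of the definition and look for a structure that is flow-invariant for \emph{every} metric. That cannot work as proposed. Rotating both summands of $H\oplus V$ is the same operator as $R\oplus R$ on $(J(t),J'(t))$, because the horizontal/vertical splitting of $T\mathbf{S}M$ modulo the flow is precisely this identification. So it fails for the reason you yourself computed. The other candidate, $R\oplus(-R)$, does not commute with $\left(\begin{smallmatrix}0&1\\ 0&0\end{smallmatrix}\right)$, so it fails already on flat $\R^3$. The appeal to ``flow-invariance of the spinor/conformal data'' is not an argument: the descent question you isolate \emph{is} the curvature condition of the theorem.

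The same confusion invalidates the integrability step. The forms $\zeta_1=\theta^1+\i\theta^2$ and $\zeta_2=\omega^1+\i\omega^2$ are $\mathrm{U}(1)$-equivariant, but they are not basic for the geodesic spray $X$. One has $\mathcal L_X\zeta_1=\zeta_2$, while $\mathcal L_X\zeta_2$ is a combination of $\theta^1,\theta^2$. Its $\bar\zeta_1$-coefficient is, up to sign, $\tfrac12\bigl((a-c)+2\i b\bigr)$ for $\mathcal R_t=\left(\begin{smallmatrix}a&b\\ b&c\end{smallmatrix}\right)$. So the trace-free part of $\mathcal R_t$ is the obstruction to the span of $\zeta_1,\zeta_2$ descending to the space of geodesics at all. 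It is not a $(0,2)$-component of the Nijenhuis tensor of an existing structure, and the term $\bar\zeta_1\wedge\bar\zeta_1$ you write down is identically zero.

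For the converse you still need integrability in constant curvature $\kappa$, and your structure equations do give it. In that case the span is basic. Moreover $d\zeta_1$ and $d\zeta_2$ are combinations of $\omega^1_{\ 2}\wedge\zeta_j$ (with $\omega^1_{\ 2}$ the connection form of rotations about $e_3$), $\zeta_2\wedge\theta^3$ and $\kappa\,\zeta_1\wedge\theta^3$. Hence they lie in the ideal generated by $\zeta_1,\zeta_2$. Alternatively, in the paper's framework, integrability follows from Brylinski's Theorem~\ref{formal integrability}, since the space of geodesics is then a finite-dimensional $I_{[g]}$-invariant submanifold of $\Kn(M)$. It also follows from the explicit models: $T\CP^1$ for $\R^3$, and the quadric $H\cap Q\simeq\CP^1\times\CP^1$ for $S^3$.
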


The complex structure constructed by Hitchin is the same as the one on the space of knots: it acts by rotating a normal vector field by angle $\frac{\pi}{2}$ in the positive direction. As we mentioned above, the space of geodesics is rarely a smooth manifold, so Hitchin's theorem is to be understood locally: it says that the rotation of a Jacobi vector field by a right angle is again a Jacobi vector field if and only if the sectional curvature is constant.

Theorems \ref{ono thrm} and \ref{hitchin thrm} combined tell us that if the geodesics in $(M,g)$ form an holomorphic family of knots, then $(M, g)$ is conformally equivalent either to $S^3$ or $\R\mathbf{P}^3$ with round metric. Our last main result is that the opposite is also true: if an holomorphic family of knots in a weak sense resembles a family of geodesics, then it is conformally equivalent to the geodesic family on $S^3$ or $\R\mathbf{P}^3$ with round metric. We make the idea of a family ``resembling a family of geodesics'' precise by introducing the following two definitions.

We say that a family is \emph{flow-like} if for every point $p \in M$ and tangent vector $v \in T_pM$ there exists a unique knot $\gamma$ from the family that passes through $p$ tangent to $v$, and {\it real} if for vectors $v$ and $-v$ these knots coincide as subsets of $M$.

\begin{mainthm}[Theorems \ref{flowlike thrm} and \ref{flowlike real thrm}]\label{flowlike thrm main}
Let $\mathcal{X}=(X_{\bullet}, X, \pi, \ev)$ be a compact family of knots in $M$ and $\dim_{\R}X=4$. Suppose that the following conditions hold:
\begin{itemize}
\item[(i)] $\mathcal{X}$ is flow-like;
\item[(ii)] there exists a conformal structure $[g]$ on $M$ and a complex structure $I_X$ on $X$ such that $\mathcal{X}$ is holomorphic.
\end{itemize}
Then $(X, I_X) \simeq \CP^1 \times \CP^1$,  and $M$ is diffeomorphic to the quotient of $S^3$ by a cyclic group. If moreover
\begin{itemize}
\item[(iii)] $\mathcal{X}$ is real,
\end{itemize}
then $M$ is diffeomorphic to $S^3$ or $\R\mathbf{P}^3$, $[g]$ is the conformal class of the round metric, and $X$ is the space of geodesics for some round metric in $[g]$. 

The conclusion fails if one drops any of the conditions \textit{(i)-(iii)}.
\end{mainthm}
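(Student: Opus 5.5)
The plan is to pass from the family to a foliation of the unit tangent bundle and then identify everything through twistor-type geometry.

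\emph{Step 1: the foliation.} Flow-likeness (i) says that the lifted evaluation map $X_\bullet \to \mathbf{S}(TM)$, $x \mapsto (\ev(x), \text{unit tangent})$, is a bijection. Using the non-degeneracy conditions of Definition~\ref{definition of a family}, I would check that it is a diffeomorphism. Hence $X_\bullet \cong \mathbf{S}(TM)$, and the fibres of $\pi$ give a foliation of $\mathbf{S}(TM)$ by circles with leaf space $X$.

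\emph{Step 2: the topology of $X$ and of $M$.} Holomorphicity (ii), together with Theorem~\ref{main kaehlerness}, makes $X$ a compact Kähler (indeed projective) surface. The fibre of $\mathbf{S}(TM)$ over a point $p$ is a sphere $S^2_p$. It maps into $X$ injectively by uniqueness in flow-likeness, and its image is the set $\{\gamma : p \in \gamma\}$. I expect this image to be a smooth complex curve: it should be a component of the "divisor" $D_p$, with the orientation reversal $v \mapsto -v$ accounting for a second branch. The curves $S^2_p$ then form a $3$-parameter family of embedded rational curves covering $X$, with two of them meeting in at most the knots through both points.

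I would then compute self-intersections, aiming for $S_p \cdot S_p = 0$. The reason is that moving $p$ in the normal direction to a knot produces disjoint spheres, since two distinct points on one knot share only that knot. A computation of the normal bundle should show it is trivial, and I expect the identification of normal directions along a knot with $N_\gamma$ to give exactly this. A projective surface carrying a covering family of smooth rational curves of self-intersection $0$ is a ruled surface. The spheres $S^2_p$ carry two independent rulings, the "forward" and "backward" directions at $p$, so $X$ should be $\CP^1\times\CP^1$. Alternatively, I would use the fibration $\mathbf{S}(TM)\to X$ with circle fibres: the Gysin sequence together with $b_1(X)=0$ pins down $H^*(X)$, and then the classification of surfaces with a pencil of rational curves applies.

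Next, $\mathbf{S}(TM) \cong S^2\times M$ (since $TM$ is trivial) is a circle bundle over $\CP^1\times\CP^1$. Its total space has $\pi_1$ cyclic, given by the Euler class divisibility, and it is a quotient of $S^3\times S^2$. Therefore $\pi_1(M)$ is finite cyclic, and geometrisation gives a lens space.

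\emph{Step 3: the real case (iii).} The involution $v\mapsto -v$ on $\mathbf{S}(TM)$ descends to the identity on $X$. This should force every knot to be an unparametrised curve that is symmetric under reversal in the twistor picture, and I would show that the knots are then conformal geodesics. I would realise $X$ as a space of curves with $I_X$ integrable and rotation-invariant Jacobi fields. By Theorem~\ref{two-complex-structures-proposition}, $[g]$ is determined by $I_X$. The round metric on $S^3$ or $\R\mathbf{P}^3$ gives a family with $X\cong\CP^1\times\CP^1$ carrying the standard structure, by Theorems~\ref{ono thrm} and~\ref{hitchin thrm}. Rigidity of $\CP^1\times\CP^1$ (its complex structure is unique), combined with uniqueness, then identifies $[g]$ with the round class. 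The reality condition excludes lens spaces of order greater than $2$: there the antipodal identification of the two rulings fails to be compatible with the cyclic action. Showing that the knots are geodesics for a specific round metric, and not merely conformally so, needs an argument that conformal circles in $S^3$ forming a flow-like real family are great circles for some round metric. I would prove this using the Möbius group acting on $X$.

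\emph{Counterexamples.} For the sharpness claims I would exhibit the following:
\begin{itemize}
\item[(a)] a non-flow-like family, using the surface examples from subsection~\ref{examples in sphere subsec};
\item[(b)] a non-holomorphic flow-like family, namely geodesics of a non-round Zoll metric;
\item[(c)] Hopf-type families on lens spaces $S^3/\Z_k$ for $k>2$, which are flow-like and holomorphic but not real.
\end{itemize}

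The main obstacle is Step 2: proving that the curves $S^2_p$ are holomorphic with trivial normal bundle, and extracting $\CP^1\times\CP^1$ rather than some other ruled surface. I would attack this first.
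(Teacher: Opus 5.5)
Your Step 2 rests on a false computation. The pinned curves $D_p$ (your spheres $S^2_p$) satisfy $[D_p]^2=2$, not $0$, so their normal bundle is $\O(2)$ rather than trivial, and the ruled-surface/``two rulings'' route to $\CP^1\times\CP^1$ collapses.

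Your own heuristic already rules out $0$. If $p'$ is a generic point of a knot $\gamma$ through $p$, then $\gamma\in D_p\cap D_{p'}$. So two distinct homologous complex curves meet, and $[D_p]^2\ge 1$. In the round model $D_p$ is a $(1,1)$-curve: two generic points lie on one great circle, counted with both orientations.

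The missing idea is Lemma~\ref{square bound}. There $[D_p]^2$ is computed as the degree of the bievaluation map $X_\bullet\times_X X_\bullet\to M\times M$. This map lifts to the real oriented blow-up of the diagonal. On the exceptional divisor $\mathbf{P}M$ it becomes $\vel$ followed by $\mathbf{S}M\to\mathbf{P}M$, and by flow-likeness each $[v]$ has exactly the two preimages $\pm v$, so the degree is $2$. You then still need the following steps:
\begin{enumerate}
\item $X$ is rational, since pinned curves are rational and chain-connect any two points.
\item $b_2(X)=2$ from the homotopy sequence of $M\times S^2\cong X_\bullet\to X$, so $X\cong\F_n$.
\item Proposition~\ref{intersection on hirzebruch prop} leaves only $\F_0$ or $\F_2$.
\item $\F_2$ is excluded because its $(-2)$-curve $\Sigma$ has $\Sigma\cdot D_p=0$, contradicting surjectivity of $\ev$ on the subfamily over $\Sigma$ (Lemma~\ref{surjectivity of evaluation}).
\end{enumerate}

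Your $\pi_1$ argument is also incomplete. The homotopy sequence only makes $\pi_1(M)$ cyclic, and you must exclude Euler class $0$, i.e.\ $M\cong S^2\times S^1$. The paper does this by noting that $\ev\colon(D_p)_\bullet\cong S^2\times S^1\to M$ has non-zero degree, yet collapses the section $\ev^{-1}(p)$, which generates $H_2$.

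In Step 3, $v\mapsto -v$ does not descend to the identity of $X$: no oriented knot has tangent both $v$ and $-v$ at $p$. Under (iii) it descends to the antiholomorphic involution $\sigma$. The reduction to $S^3$ or $\R\mathbf{P}^3$ is simply that $\pi_1(M)$ is generated by the class of a knot, which reality makes equal to its own inverse.

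More seriously, ``rigidity of $\CP^1\times\CP^1$ plus Theorem~\ref{two-complex-structures-proposition}'' cannot identify $[g]$. That theorem compares two conformal structures for the \emph{same} family of knots in the \emph{same} $M$. An abstract biholomorphism $X\cong\CP^1\times\CP^1$ gives no map from $M$ to the round model sending your knots to great circles, so you must construct one. The paper does this using $[D_p]^2=2$ again:
\begin{enumerate}
\item The linear system $|D_p|$ embeds $X$ in $\CP^3$ as a real quadric without real points.
\item Sending $p$ to the real plane cutting out $D_p$ gives a diffeomorphism or double cover $M\to\R\mathbf{P}^3$ taking knots to projective lines.
\item Pulling back the induced round metric and applying Theorem~\ref{two-complex-structures-proposition} identifies $[g]$.
\end{enumerate}
This also shows directly that the knots are geodesics of a specific round metric, which you deferred to an unspecified M\"obius-group argument.

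Finally, your example (c) is unsupported. Hopf-type families (Lemma~\ref{line bundles on curves}) have one-dimensional base and are not flow-like. Round geodesics on $S^3/\Z_k$, $k>2$, do not form a family of knots: the projected Hopf circles are shorter, so you get a Seifert fibration with exceptional fibres. The example is not needed anyway, since the non-real flow-like family of Example~\ref{non-real ex} on $S^3$ already shows that (iii) cannot be dropped.
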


\begin{rmk}
There are several ways to generalise our notion of a family of knots.
 \begin{itemize}
\item[1)] The usual notion of a knot can be extended by allowing $\gamma$ to be an immersion with self-intersections; such knots are called \emph{singular knots} in \cite{Br2}. The normal bundle $N_{\gamma/M}$ is still well-defined in this setting, and the complex structure $I_{[g]}$ extends to the space of singular knots $\widehat{\Kn(M)}$, so the notion of a family of knots extends accordingly. Most of our results appear to carry over to this generality. We work with smooth knots throughout for three reasons: firstly, it simplifies the notation considerably and makes the proofs less technical, especially in Sections~\ref{pinned loci subsec} and~\ref{at most two subsec}; secondly, we are not aware of any positive-dimensional holomorphic family in which singular knots genuinely appear; thirdly, this completion of the space of knots is not unique, and for other completions of the space of knots the results of our paper may not hold. Whether positive-dimensional families other than ours exist at all in the completion by singular knots is an interesting open question.

\item[2)] One could also allow the base $X$ of a family to be a singular complex analytic space, which would be natural from the point of view of the intrinsic complex geometry of $\Kn(M)$. Finding a suitable definition of a finite-dimensional family over such a base would, however, require additional work. Such families seem nonetheless worth studying; see e.g.\ Remark~\ref{singular rmk}.

\item[3)] Yet another extension would be to allow $X$ to be a smooth complex orbifold, which would enable one to incorporate the remaining finite quotients of the round sphere into Theorem~\ref{flowlike thrm main}. In a broader direction, if in Hitchin's Theorem~\ref{hitchin thrm} one takes $M$ to be a flat or hyperbolic $3$-manifold, the space of geodesics of $M$ admits a structure of a complex analytic stack. Studying its properties and its relation to $\pi_1(M)$ would be very interesting, though this lies well beyond the scope of the present paper.
\end{itemize}
\end{rmk}


\subsection{Organisation of the paper} In Section \ref{families-section}, we introduce the notions of  families  and holomorphic families of knots. It is in fact equivalent to the notion of a finite-dimensional submanifold (resp. complex submanifold) in the infinite-dimensional space of knots. The only novelty is that we define it in purely finite-dimensional terms. 

In Section \ref{examples}, we several examples of finite-dimensional families of knots. We genuinely give two sources of examples: one coming from projective geometry of the space of circles on the round sphere, the other one is associated with holomorphic line bundles on Riemann surfaces. We also introduce the notions of \emph{real} and \emph{flow-like} families and illustrate them through our examples.

In Section \ref{CR-section}, we remind the reader of LeBrun's CR twistor spaces and Lempert's space of transversal knots in a CR manifold. We use them to give a characterisation of holomorphic families of knots in terms of CR geometry.

In Section \ref{geometry-of-families-section}, we study the geometry of compact complex families of knots. In particular, we prove Theorems \ref{main kaehlerness} and \ref{main at most two}. We also describe the holomorphic subfamilies formed by knots that pass through a given point.  As an application of results of this section we also prove Theorem \ref{two-complex-structures-proposition} that says that if $(X, I_X)$ is the base of a compact $2$-dimensional holomorphic family of knots in $(M, [g])$ then $I_X$ determines $[g]$ and vice versa.

Finally, in Section \ref{flowlike-section}, we conclude with our proof of the converse to Hitchin's theorem (theorem \ref{flowlike thrm main} above).

\paragraph{\bf Acknowledgments.} We would like to thank Anis Bousclet, Joel Fine, and Lev Sou\-kha\-nov for interesting comments on different early drafts of this paper. V.R. thanks the Max-Planck-Institut f\"ur Mathematik in den Naturwissenschaften (Leipzig) for the quality of its working environment, and R.D. also thanks the said institution for its cordial hospitality.

\section{Families of knots}\label{families-section}

\subsection{Families of knots}\label{families basic subsec}

Fix a smooth manifold $M$. Let $X$ be a smooth manifold and $\pi \colon X_{\bullet} \to X$ a locally trivial $S^1$-bundle on $X$. We say that $\pi$ is \emph{orientable} if the vertical line bundle $\ker d\pi$ is trivial. This can always be obtained after replacing $X$ with a $2$-to-$1$ cover. An orientation of an $S^1$-bundle is the choice of the orientation on $\ker d\pi$.

Suppose that $X_{\bullet}$ is endowed with a smooth map $\ev \colon X_{\bullet} \to M$ such that for every fibre $F_x=\pi^{-1}(x)$ the restriction of $\ev$ on $F_x$ is a smooth embedding, i.e. determines a knot $\ev|_{\pi^{-1}(x)} \colon S^1 \to M$. Then one can think about $X_{\bullet} \to X$ as a \emph{family} of knots in $M$ parametrised by the base $X$. In this situation, we denote the knot $\ev(\pi^{-1}(x))$ by $\gamma_x$.  
\[
\xymatrix{
& X_{\bullet} \ar[dl]_{\pi} \ar[dr]^{\ev} &\\
X & &M
}
\]

If $X_{\bullet} \to X$ is a family of knots as above, one can associate with it the \emph{variational map} $\delta \colon TX \to T\Kn(M)$ by the following construction. Let $v \in T_xX$ be a tangent vector. Choose a lifting $\widetilde{v} \in \Gamma(F_x, TX_{\bullet}|_{F_x})$ such that $d\pi(\widetilde{v}_m)=v$ for any $m \in \pi^{-1}(x)$. The differential of the evaluation map sends $\widetilde{v}$ to a section $d\ev(\widetilde{v}) \in \Gamma(F_x, \ev^{*}TM|_{F_x})=\Gamma(S^1, \gamma_x^*TM)$. Let $P \colon \gamma_x^*TM \to N_{\gamma/M} = \gamma_x^*TM/d\gamma(TF_x)$ be the natural projection. 
\begin{prop}\label{delta is well-defined}
The resulting section $P(d\ev(\widetilde{v})) \in \Gamma(S^1, N_{\gamma/M})$ depends on $v$ only. This defines a linear map $\delta_x \colon T_xX \to \Gamma{\left(N_{\gamma_x/M}\right)}, \ v \mapsto P(d\ev(\widetilde{v}))$.
\end{prop}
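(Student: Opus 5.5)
The plan is to compare two lifts of the same vector and show that their difference dies under the projection $P$. Linearity then comes for free from the fact that lifts can be added and rescaled.

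Let $\widetilde{v}$ and $\widetilde{v}'$ be two lifts of $v \in T_xX$ along the fibre $F_x$, so that $d\pi(\widetilde{v}_m) = d\pi(\widetilde{v}'_m) = v$ for every $m \in F_x$. Their difference $w = \widetilde{v} - \widetilde{v}'$ is a section over $F_x$ of $\ker d\pi|_{F_x}$. Since $F_x = \pi^{-1}(x)$ is a fibre of the submersion $\pi$, this vertical bundle restricted to $F_x$ is exactly $TF_x$.

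By linearity of $d\ev$, we have $d\ev(\widetilde{v}) - d\ev(\widetilde{v}') = d\ev(w)$. Because $w_m \in T_mF_x$, the vector $d\ev(w_m)$ equals $d(\ev|_{F_x})(w_m)$, and this lies in $d\gamma_x(TF_x)$, where $\gamma_x = \ev|_{F_x}$. That subbundle is precisely the kernel of $P \colon \gamma_x^*TM \to N_{\gamma_x/M}$, so $P(d\ev(\widetilde{v})) = P(d\ev(\widetilde{v}'))$ pointwise on $F_x$.

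Two routine checks finish the argument. First, lifts exist: choose a local trivialisation $\pi^{-1}(U) \cong U \times S^1$ and take the constant lift $(v, 0)$, which is smooth along $F_x$. This matters only to make sure that $\delta_x$ is defined on all of $T_xX$. Second, $\delta_x$ is linear: if $\widetilde{v}$ lifts $v$ and $\widetilde{u}$ lifts $u$, then $a\widetilde{v} + b\widetilde{u}$ lifts $av + bu$. Independence of the lift then gives $\delta_x(av + bu) = a\,\delta_x(v) + b\,\delta_x(u)$, using linearity of $d\ev$ and $P$.

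There is no real obstacle here. The only point needing care is identifying $\ker d\pi|_{F_x}$ with $TF_x$, together with the fact that $\ev|_{F_x}$ is an embedding. The embedding property is what makes $d\gamma_x(TF_x)$ a genuine rank-one subbundle, so that $N_{\gamma_x/M}$ and $P$ are well defined.
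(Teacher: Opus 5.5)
Your proof is correct and follows the paper's argument: the difference of two lifts is a section of $\ker d\pi|_{F_x} = TF_x$, so its image under $d\ev$ lies in $d\gamma_x(TF_x) = \ker P$, and linearity is then immediate. You also check that lifts exist, using a local trivialisation, and you spell out the linearity step; the paper leaves both of these implicit.
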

\begin{proof}
Let $\widetilde{v}$ and $\widetilde{v'}$ be two liftings of $v$. Then $\widetilde{u}=\widetilde{v}-\widetilde{v'}$ is a section of the vertical bundle $\ker d\pi=TF_x$. Therefore $d\ev(\widetilde{u})$ is a section of $d\gamma(TS^1) \subset \gamma^*TM$ and $P(d\ev(\widetilde{u}))$ constantly vanishes. The linearity of $\delta_x$ is immediate.
\end{proof}

\begin{rmk}
The heuristics behind this construction is the following. If $X_{\bullet} \to X$ is a family of knots as above, one might imagine that it gives rise to a \emph{moduli map} $\mathfrak{m} \colon X \to \Kn(M)$. The constructed map $\delta$ should be thus thought of as the differential of this moduli map. There are various ways to make this statement precise, see e.g. \cite[Section 3]{Steff}. 
\end{rmk}

\begin{df}\label{definition of a family}
Let $M$ be a smooth manifold. A \emph{family of knots} on $M$ is a quadruple $\mathcal{X}=(X_{\bullet}, X, \pi, \ev)$, where $X$ is a smooth manifold, $\pi \colon X_{\bullet} \to X$ a locally trivial oriented $S^1$-fibration and $\ev \colon X_{\bullet} \to M$ a smooth map that satisfy the following properties:
\begin{itemize}
\item[(i)] for every fibre $F_x=\pi^{-1}(x)$ the restriction $\gamma_x:=\ev|_{F_x} \colon S^1 \to M$ is a knot;
\item[(ii)] $\gamma_x$ and $\gamma_{x'}$  are distinct\footnote{Two knots are distinct if either they are geometrically distinct, i.e. $\im \gamma_x \neq \im \gamma_{x'}$, or their images coincide, but the knots have different orientation.} for $x\neq x'$ in $X$ (\emph{``the moduli map is injective''});
\item[(iii)] $\delta_x \colon T_xX \to \Gamma{\left(\gamma_x,N_{\gamma_x/M}\right)}$ is injective for every $x$ (\emph{``the moduli map is an immersion''}).
\end{itemize}
We refer to $X$ as the \emph{base} of the family $\mathcal{X}$ (alternatively, we say that the family $\mathcal{X}$ is \emph{parametrised} by $X$). We say that a family is \emph{compact} if $X$ is compact.
\end{df}

\subsection{Holomorphic families of knots}\label{holomorphic families subsec}

\begin{df}\label{df holomorphic family}
Let $\mathcal{X}=(X_{\bullet}, X, \pi, \ev)$ be a family of knots on a $3$-dimensional manifold $M$. Let $[g]$ be a conformal structure on $M$. Recall that for every knot $\gamma \subset M$ this defines a complex structure operator $I_{[g]}$ on its normal bundle $N_{\gamma/M}$. Fix $I_X$  an integrable complex structure operator on $X$. We say that the family $\mathcal{X}$ is $I_X$-\emph{holomorphic} if  one has
\[
\delta_x(I_Xv) = I_g\delta_x(v)
\]
for each $x \in X$ and $v \in T_xX$.
\end{df}
 More explicitly, the holomorphicity condition means the following. Let $x \in X$ and $v\in T_xX$. There exists a unique vector field $L(v)\in \Gamma(F_x, TX_{\bullet}|_{F_x})$ such that $d\pi(L(v))=v$ and $d\ev(L(v))$ is $g$-orthogonal to $\dot{\gamma}_x$. Then the family is holomorphic if and only if 
 \[
 \dot{\gamma_x} \times_g d\ev(L(v)) =d\ev(L(I_Xv)).
 \]

 \begin{rmk}
Since the map $\delta_x$ is required to be injective at every $x$, the complex structure $I_X$ is uniquely recovered from $I_{[g]}$. The definition of an holomorphic family makes sense even if $I_X$ is merely an almost complex structure on $X$, but Brylinski's Theorem \ref{formal integrability} forces it to be \textit{a posteriori} integrable.
\end{rmk}

The following proposition is immediate.

\begin{prop}\label{subfamilies}
Let $(X, X_{\bullet}, \pi, \ev)$ be an holomorphic family of knots in a Riemannian $3$-manifold $(M, g)$. Let $Y \subseteq X$ be a smooth holomorphic submanifold. Then $$\left(Y, Y_{\bullet}:=\pi^{-1}(Y), \pi|_{Y_{\bullet}}, \ev|_{Y_{\bullet}}\right)$$ is an holomorphic family of knots in $(M, g)$.
\end{prop}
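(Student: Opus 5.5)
The plan is to check the three conditions of Definition~\ref{definition of a family} for the restricted quadruple, and then the holomorphicity condition of Definition~\ref{df holomorphic family}, using the complex structure $I_Y := I_X|_{TY}$.

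\textbf{Step 1: the restricted bundle.} Since $Y\subseteq X$ is a smooth submanifold and $\pi$ is a locally trivial oriented $S^1$-fibration, $Y_\bullet=\pi^{-1}(Y)$ is a smooth submanifold of $X_\bullet$. Indeed, $\pi$ is a submersion, so it is transverse to $Y$. The restriction $\pi|_{Y_\bullet}\colon Y_\bullet\to Y$ is a locally trivial $S^1$-fibration, with local trivialisations obtained by restricting those of $\pi$. Its vertical bundle is $\ker d\pi|_{Y_\bullet}$, which is the restriction of the oriented line bundle $\ker d\pi$, so it carries the induced orientation. The map $\ev|_{Y_\bullet}$ is smooth.

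\textbf{Step 2: the family conditions.} Conditions (i) and (ii) concern individual fibres and pairs of fibres only. Every fibre of $\pi|_{Y_\bullet}$ is a fibre $F_y$ of $\pi$ with $y\in Y$, so both conditions are inherited directly from $\mathcal{X}$.

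For condition (iii), the main observation is that the variational map of the subfamily is the restriction $\delta_y|_{T_yY}$. To see this, take $v\in T_yY$ and a lift $\widetilde v$ of $v$ along $F_y$ tangent to $Y_\bullet$; such a lift exists because $d\pi|_{Y_\bullet}$ is surjective onto $TY$. This $\widetilde v$ is also a lift in $X_\bullet$. By Proposition~\ref{delta is well-defined}, the value $P(d\ev(\widetilde v))$ does not depend on the choice of lift. Hence the variational map of the subfamily at $y$ equals $\delta_y|_{T_yY}$, which is injective because $\delta_y$ is.

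\textbf{Step 3: holomorphicity.} Since $Y$ is a complex submanifold, $TY$ is $I_X$-invariant, and $I_Y=I_X|_{TY}$ is an integrable complex structure on $Y$. For $v\in T_yY$ we have $I_Yv\in T_yY$. Applying Step 2 to both $v$ and $I_Yv$ gives
\[
\delta^Y_y(I_Yv)=\delta_y(I_Xv)=I_g\,\delta_y(v)=I_g\,\delta^Y_y(v).
\]
This is exactly the holomorphicity condition for the subfamily.

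None of these steps is a real obstacle. The only point that needs care is the identification of the variational map of the subfamily with $\delta_y|_{T_yY}$, which depends on the independence of lifts from Proposition~\ref{delta is well-defined}.
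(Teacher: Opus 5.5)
Your proof is correct and is the routine verification the paper intends; the paper gives no argument and simply calls the proposition immediate. The one step with content is the one you isolate: the variational map of the subfamily equals $\delta_y|_{T_yY}$, by independence of lifts (Proposition~\ref{delta is well-defined}), and you handle it properly.
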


More generally, an holomorphic family over a base $X$ can be pullbacked along an injective holomorphic map $Y \to X$.

Let us give the first example of an holomorphic family of knots. 

\begin{ex}\label{Hopf}
    Let $(S^3, g)$ be the $3$-dimensional sphere with its round metric and $h \colon S^3 \to \CP^1$ be the Hopf fibration. This determines an holomorphic family of circles inside $(S^3, g)$ parametrised by the complex projective line. More explicitly, $\mathcal{X}_{\operatorname{Hopf}}:=\{S^3, \CP^1, h, \id_{S^3}\}$ is a compact holomorphic family of knots in $(S^3, g)$ of complex dimension $1$.
\end{ex}

In the Section \ref{examples} we provide more examples of holomorphic families of knots. In particular, the reader will see that the example above can be generalised in two different ways: first, this is a particular case of an holomorphic 1-dimensional family of knots associated with an holomorphic Hermitian bundle on a Riemann surface; second, this is a subfamily of a bigger holomorphic family of circles on a conformally flat sphere.

\section{Examples}\label{examples}

In this section, we present several examples of holomorphic families of knots. We also introduce two important properties of families of knots -- \emph{realness} and \emph{flow-likeness} and give examples and non-examples of families sharing these properties.

\subsection{Families of circles in conformally flat $S^3$}\label{examples in sphere subsec}

\begin{prop}
    Consider the $3$-sphere with its round metric $(S^3, g_S)$. All the round circles constitute an holomorphic family. 
\end{prop}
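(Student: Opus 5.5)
The plan is to realise the space of oriented round circles in $S^3$ as a concrete finite-dimensional manifold, compute the variational map $\delta$, and check $\delta(I_Xv)=I_{[g]}\delta(v)$ directly. A round circle is the intersection of $S^3\subset\R^4$ with an affine $2$-plane meeting the open ball. Oriented circles are therefore parametrised by an open subset $X$ of the Grassmannian of oriented $2$-planes in $\R^{4,1}$ (Minkowski space) that are spacelike. Here $S^3$ is the projectivised light cone and circles are the intersections of the cone with $3$-dimensional subspaces of signature $(2,1)$; dually, oriented spacelike $2$-planes $W$ give circles $\mathbf{P}(W^{\perp}\cap\text{cone})$. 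Thus $X\simeq SO^+(4,1)/(SO(2)\times SO^+(2,1))$, which has real dimension $6$. The total space $X_\bullet$ consists of pairs (circle, point on it), and $\ev$ is the projection to the point. Conditions (i)--(iii) of Definition~\ref{definition of a family} are routine: distinct oriented circles are distinct knots, and injectivity of $\delta$ follows because the only deformation of a circle fixing it pointwise is zero.

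Since the conformal group $SO^+(4,1)$ acts transitively on $X$ and preserves both $[g]$ and the family, it suffices to build a natural almost complex structure on $X$ and check the holomorphicity identity at one circle, say the great circle $\gamma_0=\{(\cos t,\sin t,0,0)\}$. The normal bundle of $\gamma_0$ is trivialised by $e_3,e_4$, and $I_{[g]}$ acts as the rotation $e_3\mapsto e_4$, $e_4\mapsto -e_3$ (orientation conventions fixed once). The tangent space $T_{\gamma_0}X$ is spanned by the infinitesimal conformal motions that move $\gamma_0$, namely the quotient $\mathfrak{so}(4,1)/(\mathfrak{so}(2)\oplus\mathfrak{so}(2,1))\cong \R^2\otimes\R^{2,1}$. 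The plan is to compute their normal components explicitly. Rotations in the $(e_i,e_j)$ planes with $i\in\{1,2\}$, $j\in\{3,4\}$ give the normal fields $\cos t\, e_j$ and $\sin t\, e_j$. The conformal (translation/boost) directions along $e_3,e_4$ give the constant normal fields $e_3,e_4$. So $\delta(T_{\gamma_0}X)$ is the $6$-dimensional space
\[
V=\{(a+b\cos t+c\sin t)\,e_3+(a'+b'\cos t+c'\sin t)\,e_4\}.
\]
This space is visibly invariant under the rotation $e_3\mapsto e_4$, $e_4\mapsto -e_3$. Hence $I_X:=\delta^{-1}\circ I_{[g]}\circ\delta$ is a well-defined almost complex structure at $\gamma_0$, and by $SO^+(4,1)$-equivariance it is defined everywhere and the holomorphicity identity holds by construction.

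It remains to see that $I_X$ is integrable. The quickest route is the Remark after Definition~\ref{df holomorphic family}: Brylinski's Theorem~\ref{formal integrability} forces it. Alternatively, one can identify $\R^2\otimes\R^{2,1}$ with a complex rank-$3$ space: $I_X$ corresponds to the complex structure on the $\R^2=\C$ factor, i.e.\ the complexified description $X\subset \{\text{null? no: isotropic-free complex lines}\}$. Concretely, $W\mapsto W\otimes\C$ yields an oriented spacelike plane $\mapsto$ the complex line spanned by $w_1+\i w_2$ in the complex quadric $Q^3\subset\CP^4$. This embeds $X$ as an open subset of the quadric, and $I_X$ matches the complex structure of $Q^3$ because both are induced by the $\R^2=\C$ factor.

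The main obstacle is the invariance computation at $\gamma_0$. One must check carefully that the constant normal fields really arise, coming from the conformal (not isometric) motions, and that no other Fourier modes appear, since $V$ must be exactly the span of modes $0$ and $\pm1$ in both normal directions. A mismatch would break rotation invariance. I would first verify it by writing the conformal vector field $\xi_j(p)=e_j-\langle p,e_j\rangle p$ on $S^3$ and restricting it to $\gamma_0$.
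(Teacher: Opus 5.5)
Your proof is correct, but it gets the key invariance by explicit computation, whereas the paper uses a symmetry argument.

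\textbf{The paper's route.} The paper never describes $\delta(T_{\gamma_0}X)$. It takes a conformal automorphism $f$ of $S^3$ that fixes $\gamma_0$ pointwise and acts on $N_{\gamma_0/S^3}$ as rotation by $\pi/2$; for a great circle this is the rotation in the orthogonal $(e_3,e_4)$-plane. It then observes that if $v$ is tangent to a family of circles $\gamma_t$, then $I_{[g]}v$ is tangent to the family of circles $f(\gamma_t)$.

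\textbf{Your route.} You identify $\delta(T_{\gamma_0}X)$ with the explicit space $V$ of normal fields having Fourier modes $0,\pm1$ in each normal direction. You check rotation-invariance by inspection and then propagate by transitivity of $SO^+(4,1)$.

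\textbf{What each buys.} Your invariance of $V$ is actually forced by the paper's symmetry, with no computation. The rotation $R_{34}$ preserves $\mathfrak{so}(4,1)$ under conjugation and acts on normal fields along $\gamma_0$ by $I_{[g]}$. Hence $V$, the normal projection of $\mathfrak{so}(4,1)|_{\gamma_0}$, is invariant. What your computation gives in addition is condition (iii), which the paper's proof does not check. Your six fields are linearly independent and $\dim X=6$, so $\delta_{\gamma_0}$ is injective with image exactly $V$. This replaces the vague ``only deformation fixing it pointwise is zero'' and also removes your worry about further Fourier modes.

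\textbf{Integrability.} The paper's proof is silent on this and implicitly relies on the Brylinski remark, as your first route does. Your second route is right, but ``both are induced by the $\R^2=\C$ factor'' needs one more line:
\begin{itemize}
\item The rotation $R$ by $\pi/2$ in $W$ lies in the stabiliser of $\gamma_0$.
\item It sends $w_1+\i w_2$ to $-\i(w_1+\i w_2)$ and fixes $W^{\perp}_{\C}\cong T_{[w]}Q$, so it acts on $T_{[w]}Q$ as multiplication by $\i$.
\item Through the equivariant map $\delta$, it acts on $T_{\gamma_0}X$ as $\pm I_X$.
\item Hence $I_X=\pm J_Q$ at $\gamma_0$, and everywhere by $SO^+(4,1)$-equivariance.
\end{itemize}

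\textbf{A small correction.} Delete the parenthetical ``null? no''. The lines $[w_1+\i w_2]$ are null for $b_{\C}$, which is exactly why they lie on $Q$.
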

\begin{proof}
    Assume that $\gamma_t$ is a family of round circles in $S^3$ smoothly parametrised by $t \in ]{-\epsilon};\epsilon[$. Let $v$ be the vector in $T_{[\gamma_0]}\Kn(S^3, g_S)$ tangent to it. We can find a conformal transformation $f \colon S^3 \to S^3$ that fixes the circle $\gamma_0$ and acts on $N_{\gamma_0/M}$ as a rotation by the angle $\pi/2$ in the positive direction. Then the vector $I_gv$ is tangent to a new family $\gamma'_t:=f(\gamma_t)$. Therefore, the set of normal vector fields on $\gamma_0$ tangential to families of circles is preserved by $I_g$.
\end{proof}

A simple dimension count shows that the real dimension of the space of circles in $S^3$ equals $6$. In fact, it carries a structure of a semialgebraic domain in a complex projective $3$-fold, as we explain below.

Let  $V$ is a finite-dimenisonal real vector space equipped with a non-degenerate symmetric product $b(\cdot,\cdot)$. The Grassmannian of oriented positive definite $2$-planes $\Grp(V)$ is identified with an open domain 
\[
U:=\mathbb{P}(\{v \in V \o \C \ | \ b(v,v)=0, \ b(v, \overline{v}) >0\}).
\]
inside the quadric $Q=\{b(v,v)=0\} \subset \mathbb{P}(V \o \C)$. The isomorphism is given by sending a line $\C\cdot v \in U$ to $\operatorname{Span}(v, \overline{v})\cap V_{\R}$.

Let $(V,b)=\R^{4,1}$ be a Minkowski space. The conformally round sphere can be represented as the projectivisation of the null-cone $\mathcal{C}=\{b(v,v)=0\}\subset V$. The circles in $S^3$ then identify with the $3$-dimensional subspaces $W \subset V$ of signature $(2,1)$. Replacing $W$ with $W^{\perp}$ we parametrise the space of round circles by $\Grp(\R^{4,1})$.

Upon a point $[v]$ in $U$ one associates the circle in $S^3 =\mathbb{P}(\{b(w,w)=0\})$ cut out by the plane $\langle \operatorname{Re} v, \operatorname{Im} v \rangle$. This induces an orientable $S^1$-bundle $\pi \colon U_{\bullet} \to U$. In other words, we came to the following conclusion.

\begin{prop}\label{all circles example}
Let $b$ be a bilinear non-degenerate form of signature $(4,1)$. Let
\[
U:=\mathbb{P}(\{v \in \C^5 \ | b_{\C}(v,v)=0, \ b_{\C}(v, \overline{v}) >0\}).
\]
There exists an holomorphic family of knots in the round sphere $(S^3, g_S)$ of the form $\mathcal{U}:=(U_{\bullet}, U, \pi, \ev)$ with the property that the map $\ev$ sends every fibre of $U_{\bullet} \to U$ to a circle. Vice versa, every circle in $(S^3, g_S)$ appears in this family.
\end{prop}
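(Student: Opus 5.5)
We will prove the statement by making the correspondence between $U$ and the space of oriented round circles explicit, and then reduce holomorphicity to the preceding proposition, which says that round circles form an holomorphic family.

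\textbf{Step 1: the bijection.} Write $S^3=\mathbb{P}(\mathcal{C})$ with $\mathcal{C}=\{w\in\R^{4,1}\mid b(w,w)=0\}$. For $[v]\in U$, put $v=a+\i c$. The conditions $b_\C(v,v)=0$ and $b_\C(v,\overline{v})>0$ give
\[
b(a,a)=b(c,c)>0,\qquad b(a,c)=0 .
\]
So $P_v:=\langle a,c\rangle$ is a positive definite $2$-plane. Multiplying $v$ by $\lambda\in\C^*$ rescales and rotates the basis $(a,c)$ inside $P_v$ in an orientation-preserving way. Hence $[v]\mapsto P_v$ is a well-defined map to $\Grp(\R^{4,1})$, and it is the identification $U\simeq\Grp(V)$ recalled above.

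Since $P_v$ is positive definite, $P_v^{\perp}$ has signature $(2,1)$. Therefore $\gamma_v:=\mathbb{P}(\mathcal{C}\cap P_v^{\perp})$ is a round circle, being the intersection of $S^3$ with a $2$-dimensional projective subspace meeting it transversally. Conversely, every round circle is cut out by a unique $W$ of signature $(2,1)$. An orientation of the circle corresponds to an orientation of $W^\perp=P$, using the fixed orientation of $V$ together with the time orientation of $\mathcal{C}$. This gives a bijection between $U$ and oriented round circles, which is condition (ii) of Definition~\ref{definition of a family} together with the claim that every circle appears.

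\textbf{Step 2: the total space.} Define
\[
U_\bullet:=\{([v],[w])\in U\times S^3 \mid w\in P_v^{\perp}\},
\]
with $\pi$ and $\ev$ the two projections. Local triviality of $\pi$ follows from $SO^+(4,1)$-equivariance and transitivity on $U$, which exhibits $U_\bullet$ as the homogeneous bundle $SO^+(4,1)\times_{H}\gamma_0$, where $H$ is the stabiliser of $[v_0]$. The fibres are oriented by the orientation of $P_v$, so $\pi$ is oriented. Condition (i) holds because $\ev$ restricted to $\pi^{-1}([v])$ is the inclusion $\gamma_v\hookrightarrow S^3$.

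\textbf{Step 3: immersivity.} For condition (iii), $\delta$ is $SO^+(4,1)$-equivariant, so it suffices to check injectivity at one circle, say a great circle. There, $T_{[v_0]}U\simeq\mathrm{Hom}(P,P^\perp)$ has real dimension $6$. A variation $\phi$ moves the plane $W=P^\perp$, and the corresponding normal field along $\gamma$ is the projection of $\phi^{*}$-type infinitesimal motion of $W$. This is a normal field on a great circle that is a restriction of a conformal Killing field. I would identify these explicitly as the fields $\alpha\cos\theta+\beta\sin\theta+\eta$, with $\alpha,\beta\in N$ and $\eta$ a constant normal vector in a parallel frame, and check that the image has dimension $6$.

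\textbf{Step 4: holomorphicity.} The complex structure on $U$ is the one inherited as an open subset of the quadric $Q$, and it must be shown to match $\delta$. The preceding proposition shows that the image $\delta(T_{[v]}U)$ is $I_g$-stable, so by injectivity it induces some almost complex structure $J$ on $U$ that is $SO^+(4,1)$-invariant. It remains to check $J=I_Q$ at a single point. Both structures are invariant under the isotropy $H\supset SO(2)\times SO(2,1)$, with $SO(2)$ rotating $P$. On $T_{[v_0]}Q\simeq\mathrm{Hom}(\C v_0,\dots)$, the complex structure is the action of the rotation of $P$ by $\pi/2$. Likewise, $I_g$ on $N_\gamma$ is the rotation of the normal planes, and at the level of $V$ this is the rotation of $P=W^\perp$. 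So the two agree, possibly up to sign, and orientation conventions fix the sign. This comparison is the main obstacle: it requires tracking orientation conventions carefully. If needed, I would verify it by a direct computation at the great circle $\{w_3=w_4=0\}$ with $v_0=e_3+\i e_4$.
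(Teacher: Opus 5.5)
Your proposal is correct and follows the paper's route. You identify $U$ with $\Grp(\R^{4,1})$ via $[v]\mapsto\langle \operatorname{Re} v,\operatorname{Im} v\rangle$, attach to $[v]$ the circle cut out by the orthogonal $(2,1)$-subspace, and get holomorphicity from the preceding proposition, i.e.\ a conformal map fixing a circle and rotating its normal bundle by $\pi/2$. Your Steps 3 and 4 also supply two points the paper leaves implicit: injectivity of $\delta$, via the six-dimensional space of normal fields $\alpha\cos\theta+\beta\sin\theta+\eta$, and the fact that the induced complex structure is $\pm$ that of the quadric, where the sign is absorbed by choosing the fibre orientation, since the conjugate family is holomorphic for the opposite structure.
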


\begin{cor}\label{subfamilies of circles}
In the notations as above, let $Q=\mathbb{P}(\{b_{\C}(v,v)=0\})$ be the quadric in $\CP^4$ determined by $b$, so that $U\subset Q$ is its open subset. Let $X \subset Q$ be a complex submanifold that is contained inside $U$. Then there is a family of knots $(X_{\bullet}, X, \pi, \ev)$ in $S^3$. 
\end{cor}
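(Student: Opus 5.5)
The natural approach is to obtain the family over $X$ by restricting the family $\mathcal{U}=(U_{\bullet},U,\pi,\ev)$ of Proposition~\ref{all circles example}. Set $X_{\bullet}:=\pi^{-1}(X)$, and let the projection and evaluation maps be $\pi|_{X_{\bullet}}$ and $\ev|_{X_{\bullet}}$. Proposition~\ref{subfamilies} says that holomorphic submanifolds of the base of an holomorphic family give holomorphic subfamilies. So everything reduces to two facts: that $X$ is a holomorphic submanifold of $U$, and that $\mathcal{U}$ itself satisfies the conditions of Definition~\ref{definition of a family}. The output is then even an holomorphic family, which is stronger than the corollary asks.

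First, I will check that the complex structure on $U$ making $\mathcal{U}$ holomorphic is the one induced from $Q\subset\CP^4$. This is implicit in Proposition~\ref{all circles example}, since $U$ is presented as an open subset of $Q$. Concretely, one computes $\delta$ at a circle $[v]$. A tangent vector to $Q$ at $[v]$ is a class $w\in v^{\perp_{b_\C}}/\C v$. The associated normal field along the circle $\mathbb{P}(\mathcal{C}\cap\langle \operatorname{Re} v,\operatorname{Im} v\rangle^{\perp})$ is obtained by differentiating the real $2$-plane $\langle\operatorname{Re} v,\operatorname{Im} v\rangle$. Multiplying $w$ by $\i$ rotates this plane in the normal directions, and I expect this to correspond to rotating the normal field by $\pi/2$. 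I would verify this at one circle, for instance a great circle through the poles, using the transitivity of $\mathrm{SO}(4,1)$ and the conformal rotation argument from the proof that all round circles form an holomorphic family. Granting this, a complex submanifold $X\subset Q$ contained in $U$ is a holomorphic submanifold of $U$.

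Next, I will check the axioms for $\mathcal{U}$, since they pass to restrictions.
\begin{itemize}
\item[(i)] $U_{\bullet}\to U$ is the tautological circle bundle, oriented by the complex orientation of $\operatorname{Span}(v,\overline{v})$. Each fibre maps to the embedded round circle described above.
\item[(ii)] Injectivity holds because the oriented positive $2$-plane $\langle \operatorname{Re} v,\operatorname{Im} v\rangle$ determines both the oriented circle and the line $\C v$. This is the stated isomorphism $U\simeq\Grp(\R^{4,1})$.
\item[(iii)] Immersivity follows because the space of round circles is a $6$-dimensional manifold, and its tangent space injects into the normal fields: a first-order deformation of circles with vanishing normal field is trivial, as a circle is determined by its $2$-jet.
\end{itemize}
All three properties restrict to $X_{\bullet}\to X$, and local triviality and orientability of $\pi^{-1}(X)\to X$ are inherited from $U_{\bullet}\to U$.

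The main obstacle is the identification of $I_X$ with the quadric structure in the first step, that is, the sign and orientation conventions linking multiplication by $\i$ on $v^{\perp}/\C v$ to the positive rotation of the normal field. If it turns out that the conjugate structure appears, I would replace $X$ by its image under the antiholomorphic involution $v\mapsto\overline{v}$, or reverse the fibre orientation. Either change still yields a family, which is all the corollary requires.
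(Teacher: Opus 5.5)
Your proposal is correct and is the argument the paper intends; the paper records no separate proof. You restrict the family $\mathcal{U}$ of Proposition~\ref{all circles example} to $X\subset U$, which is a holomorphic submanifold because $U$ is open in $Q$, and apply Proposition~\ref{subfamilies}; the axiom checks and the identification of the complex structure that you plan to redo are already supplied by Proposition~\ref{all circles example}.

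One side correction to your item (i): the fibre of $U_{\bullet}$ over $[v]$ is the circle of null lines in $\langle\operatorname{Re}v,\operatorname{Im}v\rangle^{\perp}$, i.e.\ $U_{\bullet}$ is the incidence bundle. It is not the unit circle of the tautological line $\C v$, and these two circle bundles are not isomorphic: already over $H\cap Q\simeq\CP^1\times\CP^1$ their Euler classes have squares of opposite sign. So $U_{\bullet}$ should not be described as the tautological circle bundle.
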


Corollary \ref{subfamilies of circles} can be illustrated by the following important examples. Among all circles in $S^3$ there is a distinguished family of great circles (geodesics in the round metric). They correspond to the $(2,1)$-subspaces containing a line spanned by a fixed vector inside the negative cone. The orthogonals to such subspaces are all contained in a fixed subspace of signature $(4,0)$. They are therefore parametrised by the locus of points in $Q$ cut out by a hyperplane $H \subset \CP^4$. In particular, we realise the union of all great circles in $S^3$ as an holomorphic family over
\[
H \cap Q \simeq \CP^1 \times \CP^1.
\]

A different choice of a metric on $S^3$ inside the round conformal class corresponds to a different choice of a line in the negative cone in $\R^{4,1}$. The group of conformal automorphisms of $S^3$ acts transitively on the projectivisation of the interior of the negative cone, therefore every two such families differ by a conformal automorphism.

\begin{rmk}
This construction is very classical and goes back to the work of Laguerre from 1880s, although its presentation with the null cone in the Minkowski space is undoubtedly not as old, see \cite{Baird,Heller}. This space not just extends the space of great circles on $S^3$ but unifies the spaces of geodesics in all the three simply connected geometries of constant curvature. The other two, the spaces of geodesics in Euclidean $\R^3$ and hyperbolic $\H^3$, show up as surfaces in $U$ as follows. $\R^3$ is conformally equivalent to $S^3 \setminus \{\infty\}$, so the family of straight lines is biholomorphic to the space of circles on $S^3$ passing through a given point, while $\H^3$ is conformally equivalent to the interior of a ball, so the family of hyperbolic geodesics is biholomorphic to the space of circles perpendicular to a given round sphere $S^2 \subset S^3$. As hyperplane sections, they correspond to the case when $H$ is a real hyperplane with degenerate restriction of $b$, and a hyperplane of signature $(3,1)$, respectively.
\end{rmk}

\begin{prop}\label{surfaces of circles}
Let $d>0$. There exists a degree $d$ hypersurface $V_d$ in $\CP^4$ such that the intersection $X_d:=V_d \cap Q$ is smooth and contained inside $U\subset Q$. In particular, there is a compact holomorphic family of knots in the conformally round $S^3$ parametrised by $X_d$.
\end{prop}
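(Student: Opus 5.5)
Set $\rho := b_{\C}(v,\overline{v})$, a real-valued Hermitian form on $\C^5$ of signature $(4,1)$. Then $U=\{[v]\in Q : \rho(v)>0\}$. The plan is to find a degree $d$ polynomial $F$ whose zero locus on $Q$ lies in $U$ and is smooth. The final claim then follows from Corollary~\ref{subfamilies of circles}, together with Proposition~\ref{subfamilies} applied to $X_d\subset U$, which makes the family holomorphic.

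The starting point is the great-circle case, which already handles $d=1$. Choose real coordinates with
\[
b=x_0^2+x_1^2+x_2^2+x_3^2-x_4^2 .
\]
Take $H=\{x_4=0\}$. On $Q\cap H$ the vanishing of $x_4$ leaves only the positive terms, so $\rho(v)=\sum_{i\le 3}|x_i|^2>0$ for any nonzero $v$. Hence $Q\cap H\subset U$. For general $d$ I would use the degenerate hypersurface $\{x_4^d=0\}$, whose set-theoretic intersection with $Q$ is this same quadric surface. The idea is to perturb it to
\[
F_\epsilon=x_4^d+\epsilon G,
\]
with $G$ a generic degree $d$ form.

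The required properties are then checked in two steps.

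\emph{Containment in $U$.} The complement $Q\setminus U=\{\rho\le 0\}\cap Q$ is compact. On it we have $|x_4|^2\ge\sum_{i\le 3}|x_i|^2$, and therefore $|x_4|^2\ge \tfrac12\|v\|^2$. So $|x_4^d|$ is bounded below there by $c\|v\|^d$ with $c>0$. For $|\epsilon|$ small, $F_\epsilon$ therefore has no zeros on $Q\setminus U$, and $X_d:=\{F_\epsilon=0\}\cap Q\subset U$.

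\emph{Smoothness.} By Bertini, a general member of the linear system $|\mathcal O_Q(d)|$ is smooth. The singular members form a proper closed subvariety of the parameter space, the discriminant. The line $\epsilon\mapsto x_4^d+\epsilon G$ through the (non-reduced) point $x_4^d$ meets the discriminant in finitely many $\epsilon$, provided the line is not contained in it. For generic $G$ it is not, because $G$ itself is a smooth member, corresponding to $\epsilon=\infty$ projectively. So for all but finitely many small $\epsilon$ the surface $X_d$ is smooth. Both conditions are open in $\epsilon$, so a common small $\epsilon$ works.

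The only subtle point is the smoothness step. Here I must ensure that the pencil spanned by $x_4^d$ and $G$ avoids the discriminant generically, and Bertini gives exactly this since the pencil contains a smooth member. $X_d$ is then a smooth compact complex surface in $U$, and Corollary~\ref{subfamilies of circles} produces the compact holomorphic family.
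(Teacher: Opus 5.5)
Your proof is correct and follows the same route as the paper. You perturb the non-reduced hypersurface $\{x_4^d=0\}$ (the $d$-th power of the great-circle hyperplane) by a generic degree $d$ form, use Bertini along the pencil for smoothness, and use smallness of the perturbation for containment in $U$; your bound $|x_4|^2\ge\tfrac12\|v\|^2$ on $Q\setminus U$ and the discriminant argument make rigorous the two steps the paper states in one line each.
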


\begin{proof}
For $d=1$ one can take a hyperplane $H\subset \CP^4$ such that $H \cap Q$ is the family of great circles for the round metric.

Let $H$ be given by a linear equation $\left\{\sum_{i=0}^{4} c_ix_i=0\right\}$.  Consider the degree $d$ (degenerate) hypersurface $V^{0}_d:=\left(\sum_{i=0}^{4}c_ix_i\right)^{d}=0$. Set-theoretically, $V_d^{0}\cap Q = H \cap Q$. By Bertini lemma, one can perturb it inside the space of degree $d$ hypersurfaces and obtain a family of hypersurfaces $V_d^{\epsilon}$ such that $X_d^{\epsilon}=V_d^{\epsilon} \cap Q$ is smooth for a general value of $\epsilon$.  At the same time, $U$ is open, therefore for a sufficiently small perturbation the variety $X^{\epsilon}_d$ is still contained in $U$.
\end{proof}

Interestingly, taking $d=3$ one obtains holomorphic families of knots in the round $3$-sphere parametrised by K3-surfaces.

 The infinite-dimensional space of all knots comes equipped with an anti-holomorphic involution $\gamma \mapsto \bar{\gamma}$ (switching the orientation of the knot). Whether a finite-dimensional family $X \subset \Kn(M)$ is preserved by such involution, is the question of existence of an antiholomorphic family on $X$ that acts accordingly on the marked knots $X_{\bullet}$. Whenever it exists, it is unique.

\begin{df}\label{real def}
    Let $\mathcal{X}=(X_{\bullet}, X, \pi, \ev)$ be a family of knots in $M$. The \emph{conjugate family} is the  family $\overline{\mathcal{X}}$ that differs from $\mathcal{X}$ only by choice of orientation. 
    
    A family $\mathcal{X}$ is holomorphic if and only if $\overline{\mathcal{X}}$ is antiholomorphic. an holomorphic family $\mathcal{X}$ is called \emph{real} if there exists an antiholomorphic involution $\sigma \colon X \to X$ such that $\overline{\mathcal{X}}==(X'_{\bullet}, X, \pi', \ev \circ \sigma')$, where $X'_{\bullet}:=X_{\bullet} \times_{\sigma} X$ and $\pi'$ and $\sigma'$ are the natural maps
    \[
    \xymatrix{
    X'_{\bullet} \ar[d]_{\pi'} \ar[r]^{\sigma'} & X_{\bullet} \ar[d]^{\pi} \ar[r]^{\ev} & M\\
    X \ar[r]_{\sigma} & X &
    }
    \]
\end{df}

\begin{ex}\label{non-real ex}
 The family of all circles $\mathcal{U}$ is real and the involution $\sigma$ acts by complex conjugation. A subvariety $X \subseteq U$ gives a real family of circles if and only if it itself is preserved by the complex involution. 
 
 This gives many interesting examples of non-real families. Indeed, choose $H' \subset \CP^4$ to be a non-real hyperplane sufficiently close to $H$ and consider the intersection $H \cap Q$
 
 The resulting family of circles may be described as follows. Identify $S^3$ with the ideal boundary of $H^4$, and let $Y \subset H^4$ be a Veronese surface (that is, a projection of a complex line in the twistor space which is not vertical but is sufficiently close to one). For each point $y \in Y$, take all the hyperbolic planes $\tau \subset H^4$ through $y$ whose tangent planes $T_y\tau \subset T_yH^4$ induce the same complex structure on $T_yH^4$ as $T_yY$. Their ideal boundaries $\partial\tau$ form a surface $\Sigma_Y$ of knots in $\partial H^4$ parametrised by $\CP^1\times Y$. The link between the twistor space of $H^4$ and holomorphic families of knots in $S^3$ is to be investigated in a separate paper.
 
 There is no metric in the round conformal class for which this is the Hitchin's surface of geodesics.
\end{ex}

The family consisting of the fibres of the Hopf fibration (see Example \ref{Hopf}) is a subfamily of the family of great circles (it is non-real, by the way). Its base is a $\CP^1$ that is embedded into the $2$-dimensional quadric $X = H \cap Q$ as one of the generators.

Another important notion is the following. 

\begin{df}\label{flowlike def}
A family $\mathcal{X}$ in $M$ is \emph{flow-like} if for every point $m \in M$ and a vector $v \in T_mM$ there exists a unique knot $\gamma$ from the family $\mathcal{X}$ that passes through $m$ and the velocity  vector of $\gamma$ at $m$ belongs to $\R_{>0}\cdot v$.
\end{df}

Roughly speaking, this means that  given a point and a direction, one can uniquely extend it to a path along a knot from the family (see also Proposition \ref{flow-like prop} for more compact characterisation). A flow-like family is real if and only if the knots corresponding to $(m, v)$ and $(m, -v)$ differ only  by switch of the orientation.

\begin{ex}
The family of great circles is flow-like and real. The family discussed in the Example \ref{non-real ex} is flow-like but not real. Families constructed in Proposition \ref{surfaces of circles} are not flow-like for $d>1$ and can be real or non-real depending on choices.
\end{ex}

Proposition \ref{surfaces of circles} implies that there are many compact holomorphic families of knots in the round $S^3$ with base of complex dimension $2$. Most of them give rise to families that are either not real, or not flow-like. As we will see, there is a reason behind it. First of all, no conformal 3-manifold admits a compact holomorphic family of knots of complex dimension greater than two. Second, the only examples of flow-like real compact families are essentially obtained from the family of great circles.

\subsection{Families obtained from Hermitian line bundles on complex curves}

Now we discuss a different method of constructing holomorphic families of knots that is based on complex geometry and leads to families of knots in $3$-manifolds of more interesting topology. The idea is to come back to the initial example of the Hopf fibration $h \colon S^3 \to \CP^1$ and view it as the spherisation of the tautological line bundle $\O_{\CP^1}(1) \to \CP^1$.

Let $X$ be a K\"ahler manifold with a K\"ahler Riemannian metric $g$ and $p \colon \mathcal{E} \to X$ an holomorphic vector bundle on it equipped with an Hermitian metric $h$. The \emph{Chern connection} is the unique Hermitian connection $\nabla_h$ on $\mathcal{E}$ such that $\nabla_h^{0,1}=\dibar_{\mathcal{E}}$ (\cite[Proposition 3.12]{Vois}). The Chern connection splits the tangent bundle of the total space $\Tot(\E)$ into a direct sum of \emph{vertical} and \emph{horizontal} components 
\begin{equation}\label{Chern split}
T\operatorname{Tot}(\mathcal{E})=\mathcal{V} \oplus \mathcal{H}^{\nabla},
\end{equation}
where $\mathcal{V}=\ker dp$. There are natural isomorphisms $p^*\mathcal{E} \xrightarrow{\sim} \mathcal{V}$ and $p^*TX \xrightarrow{\sim} \mathcal{H}^{\nabla}$. One ends up with a K\"ahler Riemannian metric $g^{\nabla}= p^*(\operatorname{Re} h) \oplus p^*g$ on $T\Tot(\E)$. 

\begin{lemma}\label{line bundles on curves}
Let $(X, I_X)$ be a smooth complex curve and $g_X$ a K\"ahler Riemannian metric on it. Let $p \colon \mathcal{L} \to X$ be an holomorphic line bundle,  $h$  an Hermitian metric on $\mathcal{L}$ and $\nabla$ its Chern connection. Denote by $S^1\mathcal{L}:=\{(x,v) \in \mathcal{L} \ | \ ||v||_h=1\}$  the bundle of unit circles in this metric. Let $\widetilde{g}$ be the restriction of the metric $g^{\nabla}$ on $S^1\mathcal{L}\subset \Tot(\mathcal{L})$ and $\pi:= p|_{S^1\mathcal{L}}$.

Then $\mathcal{X} = (X_{\bullet}=S^1\mathcal{L}, X,  \pi, \id_{S^1\mathcal{L}})$ is an holomorphic family of knots in the three-dimensional Riemannian manifold $M=(S^1\mathcal{L}, \widetilde{g})$.
\end{lemma}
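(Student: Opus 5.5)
We check the conditions of Definition~\ref{definition of a family} and then the holomorphicity condition of Definition~\ref{df holomorphic family}, using the Chern splitting \eqref{Chern split} restricted to $M=S^1\mathcal{L}$.

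\textbf{Family axioms.} Each fibre $\pi^{-1}(x)$ is the unit circle in the complex line $\mathcal{L}_x$. The map $\ev=\id$, so each fibre is embedded and distinct points give disjoint knots. This gives (i) and (ii). The circle bundle is oriented by the $U(1)$-action $v\mapsto e^{i\theta}v$. The vertical vector field is $\xi(v)=\i v\in \mathcal{V}_v\simeq \mathcal{L}_x$.

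\textbf{Tangent space of $M$.} Since $\nabla$ is Hermitian, parallel transport preserves $\|\cdot\|_h$, so $\mathcal{H}^\nabla$ is tangent to $S^1\mathcal{L}$. Hence
\[
T_vM=\R\,\xi(v)\oplus \mathcal{H}^\nabla_v,\qquad \mathcal{H}^\nabla_v\simeq T_xX.
\]
The vector $\xi(v)$ is $\widetilde g$-orthogonal to $\mathcal{H}^\nabla_v$, by the definition of $g^\nabla$ as an orthogonal sum. It is also tangent to the knot $\gamma_x$, so the normal bundle is
\[
N_{\gamma_x/M}\simeq \mathcal{H}^\nabla|_{\gamma_x}\simeq \gamma_x^*\pi^*T_xX,
\]
which is the trivial bundle with fibre $T_xX$.

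\textbf{Variational map.} For $u\in T_xX$, the horizontal lift $L(u)$ along $\gamma_x$ satisfies $d\pi(L(u))=u$ and is orthogonal to $\dot\gamma_x$. It is therefore exactly the field appearing in the explicit form of holomorphicity. Under the identification above, $\delta_x(u)$ is the constant section $u$ of $\gamma_x\times T_xX$. This already gives condition (iii), injectivity of $\delta_x$.

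\textbf{Holomorphicity.} It remains to show that the rotation $I_{[\widetilde g]}$ on $N_{\gamma_x/M}$ agrees with $I_X$ on $T_xX$ under the identification $\mathcal{H}^\nabla\simeq p^*TX$. Then
\[
\dot\gamma_x\times_{\widetilde g} L(u)=L(I_Xu),
\]
which is exactly the required condition. Two things need checking.
\begin{itemize}
\item[(a)] \emph{Conformality.} $\widetilde g|_{\mathcal{H}^\nabla}=p^*g_X$, and $g_X$ is $I_X$-Hermitian because $X$ is a curve with a Kähler metric. So $I_X$ is a rotation by $\pi/2$ on $N_{\gamma_x/M}$ in $\widetilde g$.
\item[(b)] \emph{Orientation.} The sense of this rotation must be positive.
\end{itemize}

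\textbf{Main obstacle: the orientation check (b).} Give $\Tot(\mathcal{L})$ its complex orientation. Orient $M$ as the boundary of the unit disc bundle, with the outward radial vector $v\in\mathcal{V}_v$ first. Then $(v,\xi,\text{oriented basis of }\mathcal{H})$ is complex-oriented, since $(v,\i v)$ is a complex basis of $\mathcal{V}_v$ and $\mathcal{H}$ carries the complex structure of $TX$. It follows that $(\xi, e, I_Xe)$ is a positive frame of $TM$ for any $e\in\mathcal{H}$. The normal plane $\mathcal{H}$ with orientation induced from $(\dot\gamma,\cdot,\cdot)=(\xi,\cdot,\cdot)$ is therefore oriented by $(e,I_Xe)$, so $I_X$ is the positive rotation.

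If the paper's conventions for the orientation of $M$ or of the fibres turn out to be the opposite ones, one of the orientations is flipped. In that case I would either reverse the fibre orientation or replace $\mathcal{L}$ by its dual, which amounts to the same thing. This sign bookkeeping, rather than any analysis, is the only delicate point.

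Integrability of $I_X$ is automatic on a curve, so the family is $I_X$-holomorphic.
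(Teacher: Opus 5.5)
Your proof is correct and follows the same route as the paper. Both arguments identify $N_{\gamma_x/M}$ with $\mathcal{H}^\nabla|_{\gamma_x}$ and $\delta_x$ with the horizontal lift, use that $d\pi\colon(\mathcal{H}^\nabla,\widetilde g)\to(\pi^*TX,g_X)$ is an isometry to get the rotation equal to $\pm I_X$, and then fix the sign. Your explicit orientation computation (boundary orientation of the unit disc bundle, fibres oriented by the $U(1)$-action) settles the sign more transparently than the paper's appeal to $\nabla^{0,1}=\overline{\partial}_{\mathcal{L}}$, since the sign is purely a matter of orientation conventions. The one slip is in your fallback remark: the natural antilinear identification $S^1\mathcal{L}\cong S^1\mathcal{L}^*$ reverses both the fibre orientation and the boundary orientation of $M$, so passing to $\mathcal{L}^*$ does not change the sign, whereas reversing the fibre orientation alone does.
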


\begin{proof} Since $\nabla$ is Hermitian, the horizontal distribution $\mathcal{H}^{\nabla}$ restricts to a distribution on $S^1\mathcal{L} \subset \Tot(\mathcal{L})$ that we also denote $\mathcal{H}^{\nabla}$. One has $TS^1\mathcal{L}=\mathcal{V} \oplus \mathcal{H}^{\nabla}$ with $\mathcal{V}=\Ker d\pi$.

Let $x \in X$ be a point on the base and $F_x=\pi^{-1}(x)$ a fibre over it. Then the bundle $N_{F_x/\mathcal{L}^1}=N_{\gamma/M}$ can be identified with $\mathcal{H}^{\nabla}|_{F_x}$. The map $\delta_x \colon T_xX \to \Gamma(F_x, N_{F_x/\mathcal{L}^1})$ then identifies with the horizontal lift $v \mapsto \widetilde{v}, \ \nabla\widetilde{v}=0$. In particular,  $\delta_x$ is injective; since the evaluation map is a bijection, this is indeed a family in the sense of Definition \ref{definition of a family}. 

Notice that for a point $y \in F_x$ and an horizontal vector $u \in \mathcal{H}^{\nabla}_y$ the inner product $\dot{\gamma}(y) \times_{\widetilde{g}} u$ is again horizontal, because $\mathcal{V} \perp_{\widetilde{g}} \mathcal{H}^{\nabla}$. The operation $u \mapsto \dot{\gamma}(y) \times_g u$ defines a complex structure operator on $\mathcal{H}^{\nabla}_y$ that we denote by $I_{\nabla}$. This turns $\mathcal{H}^{\nabla}$ into a complex rank one vector bundle over $S^1\mathcal{L}$.

The differential $d\pi$ induces an isomorphism $d\pi \colon \mathcal{H}^{\nabla} \xrightarrow{\sim} \pi^*TX$. This is an isomorphism of real vector bundles of rank $2$. Both of these vector bundles are endowed with complex structure operators and to prove the holomorphicity of family $\mathcal{X}$ one needs to show that this isomorphism commutes with the complex structure:
\[
d\pi \circ I_{\nabla}=I_X \circ d\pi.
\]
The isomorphism $d\pi$ is an isometry with respect to Hermitian metrics $\widetilde{g}|_{\mathcal{H}^{\nabla}}$ and $p^*g_X$. But on a given $2$-dimensional real vector space there exists only two complex structures which are Hermitian with respect to a given positive scalar product. Both of them should be elements of order $4$ in $\U(1)$, and there are only two of such elements. Therefore $d\pi$ sends $I_{\nabla}$ either to $I_X$ or to $-I_X$. The latter is ruled out by the fact that $\nabla^{0,1}=\dibar_{\mathcal{L}}$.
\end{proof}

\begin{rmk}
Such families are neither real nor flow-like.
\end{rmk}

It is worthy to mention a generalisation of this principle for non-compact curves.

\begin{prop}\label{horizontal-conformality}
    Let $(M,g)$ be a Riemannian 3-manifold, and $f \colon M \to U \subset \R^2$ a proper differentiable map. The connected components of its fibres form a complex curve iff there exists an embedding of $U$ into $\C$ which makes $g(df,df) = 0$.
\end{prop}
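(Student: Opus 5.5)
The plan is to reduce the statement to the holomorphicity criterion of Definition~\ref{df holomorphic family}, in the spirit of Lemma~\ref{line bundles on curves}. Since $f$ is proper, the connected components of its fibres over regular values are compact $1$-manifolds, i.e.\ knots. Locally on the set of regular values these knots form a family parametrised by an open piece of $U$: locally the base is $U$ itself, and $\pi$ is the restriction of $f$ to a component. At a point $m$ of a fibre $\gamma$, the differential $df_m \colon T_mM \to \R^2$ has kernel $\R\dot\gamma$. So $df_m$ descends to an isomorphism $N_{\gamma/M,m}\cong \dot\gamma^{\perp}\to T_{f(m)}U$. Under this isomorphism, the variational map $\delta$ of the family is exactly the inverse of $df|_{\dot\gamma^\perp}$. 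Indeed, a lift $L(v)$ of $v$ orthogonal to $\dot\gamma$ satisfies $df(L(v))=v$. In particular, condition (iii) of Definition~\ref{definition of a family} holds automatically.

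The holomorphicity condition $\delta(I_Uv)=I_{[g]}\delta(v)$ then reads as follows: at every point $m$, the linear isomorphism $df_m\colon \dot\gamma^\perp \to T_{f(m)}U\cong\R^2$ intertwines the rotation by $\pi/2$ in $\dot\gamma^\perp$ (the operator $u\mapsto\dot\gamma\times_g u$) with a fixed complex structure $I_U$ on $U$. As in the proof of Lemma~\ref{line bundles on curves}, a real-linear map between oriented $2$-planes commutes with the complex structures iff it is conformal and orientation preserving. The key step is to translate this into the equation $g(df,df)=0$, for a complex coordinate $z=f_1+\i f_2$ on $U$. Write $df=df_1+\i\,df_2$, where $df_1,df_2$ vanish on $\dot\gamma$ and so, via $g$, are covectors on $\dot\gamma^\perp$. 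Then
\[
g(df,df)=|df_1|^2-|df_2|^2+2\i\, g(df_1,df_2).
\]
This vanishes iff $df_1,df_2$ are orthogonal of equal length, i.e.\ iff $df|_{\dot\gamma^\perp}$ is conformal onto $\C$. Orientation can be fixed by orienting the knots appropriately; if needed, one conjugates $z$, which still satisfies $g(d\bar z,d\bar z)=0$.

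For the direction ``complex curve $\Rightarrow$ embedding'', start from an integrable $I_U$ making the family holomorphic. A Riemann surface structure on a (simply connected, or suitable) open $U\subset\R^2$ admits local holomorphic coordinates $z$. The argument above then gives $g(dz\circ f, dz\circ f)=0$ locally, and the pointwise condition is independent of the choice of holomorphic chart. To obtain a global embedding $U\hookrightarrow\C$, one uses uniformisation of the planar domain $U$, or reads the statement locally, as the paper does for Hitchin's theorem. For the converse, pull back the complex structure of $\C$; the computation above gives holomorphicity directly.

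The main obstacle I expect is bookkeeping rather than analysis. First, critical points of $f$ and disconnected fibres must be handled: one restricts to regular values and lets the base be the space of components, which is locally diffeomorphic to $U$. Second, the global existence of the embedding into $\C$ must be justified. I would state clearly that the equivalence is pointwise and local, with globalisation via uniformisation of planar domains.
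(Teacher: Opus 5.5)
Your proposal is correct and is essentially the paper's argument: identify the variational map with the $g$-orthogonal horizontal lift (the inverse of $df|_{\dot\gamma^{\perp}}$), note that holomorphicity means $df$ intertwines the rotation by $\pi/2$ around the knot with a complex structure on $U$, i.e.\ $f$ is horizontally weakly conformal, and translate this into $g(df,df)=0$. The differences are minor: you verify the last equivalence directly via $g(df,df)=|df_1|^2-|df_2|^2+2\sqrt{-1}\,g(df_1,df_2)$ instead of quoting the literature on horizontally conformal maps, and you are more explicit than the paper about restricting to regular values and about whether the embedding $U\hookrightarrow\C$ is local or global.
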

\begin{proof}
    Let $u \in U$ be a point in the image of $f$, and $v \in T_uU$ a tangent vector. One may consider its horizontal lift, that is, a unique vector field along $f^{-1}(z)$ which is orthogonal to it with respect to $g$ and maps to $v$ by the differential of the projection at each point on $f^{-1}(z)$. Then the fibres form a holomoprhic family iff the 90 degrees rotation of any horizontal lift is an horizontal lift again. This defines a complex structure on $U$, and thus its map to $\C$ for which $df$ is horizontally weakly conformal at each point of $M$. This condition is equivalent to $g(df,df) = 0$, see \cite{Baird} and references therein.
\end{proof}

Combined with the results of \cite{GL}, this allows to construct a lot of non-compact one-dimensional holomorphic families of knots in conformally round $S^3$ whose members are not circles.

\section{Holomorphic families of knots and CR-geometry}\label{CR-section}
In this section, we obtain a nice criterion for a family of knots to be holomorphic in terms of LeBrun's CR structure on the spherisation of the cotangent bundle of a conformal $3$-manifold. This approach is essentially a reformulation of a paper of Lempert \cite{Lemp} that connects holomorphic families of knots in a conformal $3$-manifold to holomorphic families of transverse negative knots in it's LeBrun's CR twistor space.

\subsection{The velocity map}\label{velocity basics}

Let  $(M, [g])$ be a conformal $3$-manifold and $\mathcal{X}=(X_{\bullet}, X, \pi, \ev)$ a family of knots in it. Let 
\[
\mathbf{S}M:=(T^*M \setminus 0_{T^*M})/(\R^{\times}_{>0})
\]
be the spherisation of the cotangent bundle of $M$ (here $0_{T*M}$ denotes the zero section). It is equipped with a structure of $S^2$-bundle $p \colon \mathbf{S}M \to M$ that is in fact trivial by Stiefel's parallelisability theorem (\cite{Stief, Kirb}). Thus one has a non-canonical diffeomorphism $\mathbf{S}M \simeq S^2 \times M$.

The kernel of the Liouville $1$-form $\lambda$ on $T^*M$ descends to a well-defined corank $1$ distribution $\Lambda$ on $\mathbf{S}M$. Explicitly, if $m \in M$ and $[\alpha] \in (T^*_mM \setminus\{0\})/\R^{\times}_{>0}$, the hyperplane $\Lambda_{m,[\alpha]} \subset T_{m,[\alpha]}\mathbf{S}M$ is described as
\[
\Lambda_{m, [\alpha]}:=\{v \in T_{m, [\alpha]}\mathbf{S}M \ | \alpha(dp(v)) =0\}.
\]

\begin{df}
The \emph{velocity map} of a family is the map $\vel \colon X_{\bullet} \to \mathbf{S}M$ is the map that sends a point on a knot to the plane orthogonal to the knot, i.e. 
\[
\vel \colon (x,t) \mapsto (\gamma_x(t), [g(\dot{\gamma_x}(t), -)]).
\]
\end{df}
If one identifies $\mathbf{S}M$ with the spherisation of $TM$ via $g$ the formula for the velocity map turns to a more pleasant form $(x,t) \mapsto (\gamma_x(t), \dot{\gamma_x}(t))$, hence the name. In this form, the velocity map is independent on the choice of the conformal structure on $M$. The diagram
\[
\xymatrix{
X_{\bullet} \ar[r]^{\vel} \ar[dr]_{\ev} & \mathbf{S}M \ar[d]_p\\
& M
}
\]
evidently commutes.

The following propositions are immediate.

\begin{prop}\label{flow-like prop}
A family $\mathcal{X}=(X_{\bullet}, X, \pi, \ev)$ is flow-like (see Definition \ref{flowlike def}) if and only if $\vel \colon X_{\bullet} \to \mathbf{S}M$ is a bijection.
\end{prop}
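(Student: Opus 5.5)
The plan is to unwind both definitions and compare them pointwise. Recall that we identify $\mathbf{S}M$ with the spherisation of $TM$ via $g$, so that $\vel(x,t) = (\gamma_x(t), [\dot\gamma_x(t)])$, where $[\dot\gamma_x(t)]$ is the ray $\R_{>0}\cdot\dot\gamma_x(t)$. This ray is well defined because $\gamma_x$ is an embedding, so $\dot\gamma_x(t)\neq 0$. A point of $\mathbf{S}M$ is therefore exactly a pair $(m,[v])$ with $m\in M$ and $[v]$ a ray in $T_mM$. These are precisely the data quantified over in Definition~\ref{flowlike def}.

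For surjectivity, we translate directly. The family is flow-like only if, for every $(m,[v])$, there exists $x\in X$ and $t\in F_x$ with $\gamma_x(t)=m$ and $\dot\gamma_x(t)\in\R_{>0}\cdot v$. This says exactly that $(m,[v])=\vel(x,t)$, so the existence part of flow-likeness is equivalent to surjectivity of $\vel$.

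For injectivity, suppose $\vel(x,t)=\vel(x',t')$. Then the knots $\gamma_x$ and $\gamma_{x'}$ pass through the same point $m$ with positively proportional velocities. Under flow-likeness, the uniqueness clause forces $\gamma_x$ and $\gamma_{x'}$ to be the same knot. By condition (ii) of Definition~\ref{definition of a family}, this gives $x=x'$. Since $\gamma_x$ is injective on $F_x$ and $\gamma_x(t)=\gamma_x(t')=m$, we also get $t=t'$.

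For the converse, assume $\vel$ is injective and suppose two knots $\gamma_x,\gamma_{x'}$ pass through $m$ with velocities in $\R_{>0}v$. Then $\vel(x,t)=\vel(x',t')$ for the corresponding points, so $(x,t)=(x',t')$ and in particular $x=x'$. This is the uniqueness clause.

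The only point requiring care is what ``the same knot'' means in the uniqueness clause. Knots are taken up to orientation-preserving reparametrisation, and by condition (ii) distinct points of $X$ give distinct oriented knots. So uniqueness of the knot is the same as uniqueness of $x\in X$. I expect no real obstacle beyond stating this convention explicitly.
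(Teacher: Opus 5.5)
Your argument is correct and is exactly the definition-unwinding the paper intends; the paper calls this proposition immediate and gives no further proof. Surjectivity of $\vel$ is the existence clause of flow-likeness, and injectivity is the uniqueness clause once you use condition (ii) of Definition~\ref{definition of a family} and the injectivity of $\gamma_x$ on $F_x$.
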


\begin{prop}\label{ehresmann prop}
Set $\mathcal{H}:=d\vel^{-1}(\Lambda)$. Then $\mathcal{H} \subset TX_{\bullet}$ is a subbundle that defines an Ehresmann connection of the $S^1$-bundle $\pi \colon X_{\bullet} \to X$.
\end{prop}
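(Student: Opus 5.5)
The plan is to prove the claim pointwise on $X_{\bullet}$. Fix $y=(x,t)\in X_{\bullet}$ and put $m=\ev(y)$, $\xi=\vel(y)$. Since $\Lambda$ is a corank one distribution on $\mathbf{S}M$, we have $\mathcal{H}_y=\ker\bigl(\alpha_\xi\circ dp\circ d\vel_y\bigr)$, where $\alpha_\xi=g(\dot\gamma_x(t),-)$ is a representative of the covector $\xi$. The commutative diagram $p\circ\vel=\ev$ turns this into
\[
\mathcal{H}_y=\{w\in T_yX_{\bullet} \ | \ g(\dot\gamma_x(t), d\ev(w))=0\}.
\]
So $\mathcal{H}_y$ is the kernel of the single linear functional $\beta_y:=g(\dot\gamma_x(t), d\ev(-))$ on $T_yX_{\bullet}$. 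The scale of $\alpha_\xi$ is irrelevant, because rescaling by $\R_{>0}$ does not change the kernel. Smoothness in $y$ is clear, since $\beta$ is a smooth $1$-form on $X_{\bullet}$.

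The key step is to evaluate $\beta$ on the generator $\partial_t$ of the vertical bundle $\ker d\pi$. We have $d\ev(\partial_t)=\dot\gamma_x(t)$, and this is nonzero because $\gamma_x$ is an embedding by condition (i) of Definition~\ref{definition of a family}. Hence
\[
\beta_y(\partial_t)=g(\dot\gamma_x,\dot\gamma_x)>0.
\]
Two consequences follow. First, $\beta$ is nowhere vanishing, so $\mathcal{H}$ is a smooth corank one subbundle. Second, $\mathcal{H}_y\cap\ker d\pi_y=0$. By dimension count, $T_yX_{\bullet}=\mathcal{H}_y\oplus\ker d\pi_y$, and therefore $d\pi$ maps $\mathcal{H}_y$ isomorphically onto $T_xX$. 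This is exactly the definition of an Ehresmann connection on $\pi\colon X_{\bullet}\to X$.

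It is worth remarking that $\mathcal{H}_y$ is precisely the span of the lifts $L(v)$ from Subsection~\ref{holomorphic families subsec}. This gives a consistency check: the uniqueness and existence of $L(v)$ is the same splitting statement.

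There is no real obstacle. The only point needing care is the identification of $d\vel^{-1}(\Lambda)$ through $dp\circ d\vel=d\ev$, together with checking that the description is independent of the choice of representative $\alpha$ and of the metric in $[g]$. Changing the metric conformally rescales $\beta$ by a positive function, so $\mathcal{H}$ depends only on $[g]$ (and in fact only on the direction of $\dot\gamma$, via the velocity map).
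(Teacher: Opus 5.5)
Your proposal is correct and follows the same route as the paper. Your form $\beta=g(\dot\gamma_x,d\ev(-))$ is exactly the pullback $\vel^*\lambda$ computed with the representative $g(\dot\gamma_x,-)$ of the ray. The paper likewise shows this form is nowhere vanishing by evaluating it on $\partial_t$, and then concludes $TX_{\bullet}=\ker d\pi\oplus\mathcal{H}$. Your explicit choice of representative and the dimension count only spell out details the paper leaves implicit, such as the fact that writing $\vel^*(\lambda|_{\mathbf{S}M})$ requires lifting $\mathbf{S}M$ back into $T^*M$.
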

\begin{proof}
We can write $\mathcal{H}=\ker \vel^*(\lambda|_{\mathbf{S}M})$. The $1$-form $\vel^*\lambda$ is nowhere vanishing, since 
\[
\lambda\left(d\vel\left(\frac{\di}{\di t}\right)\right)=g\left(\dot{\gamma}_x(t), \dot{\gamma}_x(t)\right) \neq 0.
\]
Moreover, $TX_{\bullet} =\ker d\pi \oplus \mathcal{H}$.
\end{proof}

\subsection{Velocity map and CR geometry}

Recall that an \emph{almost CR structure} on a smooth manifold $N$ is a distribution $L \subset TN \o \C$ satisfying $L \cap \overline{L}=0$. Alternatively, it is given by a distribution $H \subseteq TN$ and a complex structure operator $J \colon H \to H, \ J^2=-1$ (the equivalence of two descriptions is given by $H:=(L+\overline{L})_{\R}, \ L:=H^{1,0}$). An almost CR-structure is called \emph{integrable}, or just a \emph{CR structure}, if $[L,L]\subseteq L$. Equivalently, the Frobenius tensor 
\[
\Lambda^2H \to TN/H, \ \eta_1 \wedge \eta_2 \mapsto [\eta_1, \eta_2] \mod H
\]
is of $(1,1)$-type with respect to $J$. The \emph{CR corank} of an almost CR-structure is defined as
$\dim M-\rk H$.

A(n almost) CR structure of zero CR corank is just a(n almost) complex structure.

A map of almost CR manifolds $f \colon (N_1, L_1) \to (N_2, L_2)$ is called \emph{CR holomorphic} if $df(L_1) \subseteq L_2$. Note that we do not require $N_1$ and $N_2$ to have equal CR codimension.

Let $\mathcal{X}=(X_{\bullet}, X, \pi, \ev)$ be a family of knots in $M$. Let $\mathcal{H}$ be the Ehresmann connection constructed in Proposition \ref{ehresmann prop}. The map $d\pi \colon \mathcal{H} \to \pi^*TX$ is an isomorphism. Thus, we can pull back the complex structure operator $I_X$ to $\mathcal{H}$, obtaining an almost CR structure on $(\mathcal{H}, \widetilde{I_X})$ on $X_{\bullet}$.

This can be restated as follows.
\begin{cor}\label{cr on marked knots}
Let $\mathcal{X}=(X_{\bullet}, X, \pi, \ev)$ be a family of knots in a $3$-dimensional conformal manifold $(M, [g])$. Then any almost complex structure $I_X$ on $X$ yields an almost CR structure $(\mathcal{H}, \widetilde{I_X})$ on $X_{\bullet}$ in such a way that the projection $\pi \colon X_{\bullet} \to X$ is CR holomorphic.
\end{cor}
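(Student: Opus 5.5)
The plan is to check the three things the statement asserts: $(\mathcal{H}, \widetilde{I_X})$ is an almost CR structure on $X_{\bullet}$, this structure is determined by $I_X$ alone, and $\pi$ is CR holomorphic. No integrability is claimed, so everything reduces to pointwise linear algebra on the subbundle $\mathcal{H}$.

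First I would fix the subbundle. Proposition \ref{ehresmann prop} gives that $\mathcal{H}=\ker \vel^*\lambda$ is a corank one subbundle of $TX_{\bullet}$ with $TX_{\bullet}=\ker d\pi \oplus \mathcal{H}$. So at each point $y\in X_{\bullet}$, $d\pi_y\colon \mathcal{H}_y \to T_{\pi(y)}X$ is a linear isomorphism, as it is injective on a complement of $\ker d\pi_y$ and the dimensions match. These isomorphisms vary smoothly in $y$, giving a bundle isomorphism $d\pi\colon \mathcal{H}\xrightarrow{\sim}\pi^*TX$.

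Next I would define the complex structure by transport:
\[
\widetilde{I_X}:=(d\pi|_{\mathcal{H}})^{-1}\circ \pi^*I_X\circ d\pi|_{\mathcal{H}}.
\]
This is a smooth endomorphism of $\mathcal{H}$, and $\widetilde{I_X}^2=-1$ because it is conjugate to $I_X^2=-1$. Setting $L:=\mathcal{H}^{1,0}\subset TX_{\bullet}\otimes\C$, the $(+\sqrt{-1})$-eigenbundle of $\widetilde{I_X}$, we get $L\cap\overline{L}=0$, since the two eigenbundles belong to distinct eigenvalues. Hence $(\mathcal{H},\widetilde{I_X})$ is an almost CR structure of CR corank $1$. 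Its construction uses only $\mathcal{H}$, which depends on $[g]$ through $\vel$, and on $I_X$.

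For CR holomorphicity I need $d\pi(L)\subseteq T^{1,0}X$. If $w\in L$, then $\widetilde{I_X}w=\sqrt{-1}\,w$. By construction $d\pi\circ\widetilde{I_X}=I_X\circ d\pi$ on $\mathcal{H}\otimes\C$, so $I_X(d\pi\, w)=\sqrt{-1}\,d\pi\, w$, and therefore $d\pi\, w\in T^{1,0}X$.

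There is no real obstacle. The only point needing care is the transversality $\mathcal{H}\cap\ker d\pi=0$, which is already contained in Proposition \ref{ehresmann prop}: $\vel^*\lambda$ is nonzero on $\partial/\partial t$.
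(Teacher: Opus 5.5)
Your proposal is correct and follows the paper's own argument: the paper likewise takes the Ehresmann connection $\mathcal{H}$ from Proposition \ref{ehresmann prop}, notes that $d\pi\colon \mathcal{H}\to\pi^*TX$ is an isomorphism, and pulls $I_X$ back along it to define $\widetilde{I_X}$. You additionally write out the routine checks that the paper leaves implicit, namely $\widetilde{I_X}^2=-1$, $L\cap\overline{L}=0$, and $d\pi(L)\subseteq T^{1,0}X$.
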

Note that even if $I_X$ is integrable, $\widetilde{I_X}$ does not have to be.
\subsection{LeBrun's CR-twistor spaces}\label{lebrun subsec}

LeBrun (\cite{LeBrun84}) observed  that a choice of a conformal structure on a $3$-manifold $M$ yields an integrable CR structure on $\mathbf{S}M$ with the CR distribution $\Lambda$. We describe the construction assuming that $M$ is endowed with a fixed Riemannian metric $g$. It follows a posteriori that the construction depends only on the conformal class of $g$.

Let $m \in M$ be a point and $S_m:=p^{-1}(m)$ the spherisation of $T^*_mM$. Let $[\alpha]\in S_m$. The space $\Lambda_{m,[\alpha]}$ fits into an exact sequence
\begin{equation}\label{cr exact triple}
0 \to \ker dp|_{m,[\alpha]} \to \Lambda_{m,[\alpha]} \xrightarrow{dp} \alpha^{\perp} \to 0.
\end{equation}
Here $\alpha^{\perp}$ is the $2$-plane that corresponds to $\ker \alpha$ under the identification $g \colon T^{*}_mM \xrightarrow{\sim} T_mM$.

The space $\ker dp|_{m, [\alpha]}$ is the tangent space to $S_m$ at the point $[\alpha]$. Let $I_0$ be the unique integrable complex structure on the $2$-sphere $S_m$.

The space $\alpha^{\perp}$ is a $2$-plane that inherits a symmetric positive scalar product from $T^*_mM$. Moreover, since $[\alpha]$ determines a normal co-orientation of $\alpha^{\perp}$, it is oriented (here it is crucial that we are considering the spherisation of the cotangent bundle rather than its projectivisation). Therefore it admits a unique orthogonal positive oriented complex structure operator $I_{g, [\alpha]}$. 

In other words, the exact sequence (\ref{cr exact triple}) realises $\Lambda_{m, [\alpha]}$ as an extension of two complex $1$-dimensional vector spaces, namely $(\ker dp|_{m, [\alpha]}, I_0)$ and $(\alpha^{\perp}, I_{g,[\alpha]})$.

Finally, the Levi-Civita connection on $(M,g)$ gives an Ehresmann connection on the bundle $\mathbf{S}M \xrightarrow{p} M$, and thus a  splitting  $s \colon \alpha^{\perp} \to \Lambda_{m, [\alpha]}$ of the exact sequence (\ref{cr exact triple}). Therefore, we can write a complex structure operator on $\Lambda_{m,[\alpha]}$ as
\[
J_{g}:=I_0\oplus s_*I_{g,[\alpha]}.
\]

\begin{thrm}[LeBrun, \cite{LeBrun84}]
The construction above gives a well-defined integrable CR structure $(\Lambda, J_{g})$ on $\mathbf{S}M$. Moreover, it depends only on the conformal class of the metric $g$.
\end{thrm}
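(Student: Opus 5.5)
The plan is to work on the oriented orthonormal frame bundle $\pi_F \colon F \to M$ of $(M,g)$ and push everything down to $\mathbf{S}M = F/\mathrm{SO}(2)$. Here a frame $(e_1,e_2,e_3)$ maps to $(m,[g(e_3,-)])$. On $F$ we have the tautological forms $\theta^1,\theta^2,\theta^3$ and the Levi-Civita connection forms $\omega^i_j=-\omega^j_i$. They satisfy the structure equations
\[
d\theta^i=-\omega^i_j\wedge\theta^j,\qquad d\omega^i_j=-\omega^i_k\wedge\omega^k_j+\Omega^i_j,
\]
where $\Omega^i_j$ is horizontal. Set
\[
\theta:=\theta^1+\i\,\theta^2,\qquad \omega:=\omega^1_3+\i\,\omega^2_3 .
\]
The first step is to identify the pieces of $J_g$ in these terms:
\begin{itemize}
\item The contact distribution is $\Lambda=\ker\theta^3$ (pulled back to $F$).
\item The horizontal part $s(\alpha^\perp)$ is cut out by $\omega^1_3=\omega^2_3=0$, and on it $\theta$ is a $(1,0)$-form for $I_{g,[\alpha]}$. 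This uses the orientation of $\alpha^\perp$ induced by the coorientation $e_3$.
\item The vertical tangent space $T_{[\alpha]}S_m$ is detected by $\omega^1_3,\omega^2_3$: moving $e_3$ infinitesimally toward $e_1,e_2$. With the correct orientation, $\omega$ or $\bar\omega$ is a $(1,0)$-form for $I_0$.
\end{itemize}
So the annihilator of $\Lambda^{0,1}$ is spanned by $\theta^3,\theta,\omega$ (possibly $\bar\omega$). Both $J_g^2=-1$ and well-definedness on $\Lambda$ are then immediate, because $J_g$ is a direct sum of two complex structures along a splitting. Under the residual $\mathrm{SO}(2)$ rotating $(e_1,e_2)$, the forms $\theta$ and $\omega$ are multiplied by $e^{\mp \i\phi}$. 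Hence the ideal $\mathcal I=(\theta^3,\theta,\omega)$ descends to $\mathbf{S}M$.

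The second step is integrability. It is equivalent to $d\mathcal I\subseteq\mathcal I$, which I check on generators.
\begin{itemize}
\item For $\theta^3$: $d\theta^3=-\omega^3_1\wedge\theta^1-\omega^3_2\wedge\theta^2$. Since $\omega^3_a=-\omega^a_3$, this is $\tfrac12(\omega\wedge\bar\theta+\bar\omega\wedge\theta)$. It lies in $\mathcal I$ because of the $\omega$ and $\theta$ factors; this is also the usual Levi form.
\item For $\theta$: a direct computation gives $d\theta=\i\,\omega^1_2\wedge\theta-\omega\wedge\theta^3\in\mathcal I$.
\item For $\omega$: the quadratic connection terms give $\i\,\omega^1_2\wedge\omega$, which lies in $\mathcal I$. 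The curvature part $\Omega^1_3+\i\Omega^2_3$ is a combination of $\theta^j\wedge\theta^k$.
\end{itemize}
Every such horizontal $2$-form lies in $\mathcal I$. Indeed, $\theta^1\wedge\theta^2=\tfrac{\i}{2}\theta\wedge\bar\theta$ contains the factor $\theta$, and the remaining terms contain $\theta^3$. So curvature never obstructs, which is the reason the construction works in dimension $3$ for every metric. If the orientation forces $\bar\omega$ instead of $\omega$, the same computation must be redone with $\mathcal I=(\theta^3,\theta,\bar\omega)$. The first two bullets then survive only if the identification is the consistent one.

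The third step is conformal invariance. Under $\hat g=e^{2f}g$ we use the frame $\hat e_i=e^{-f}e_i$. Then $\hat\theta^i=e^{f}\theta^i$, and
\[
\hat\omega^i_j=\omega^i_j+f_j\theta^i-f_i\theta^j ,
\]
where $df=f_i\theta^i$. Consequently $\hat\theta^3,\hat\theta\in\mathcal I$, and $\hat\omega=\omega+f_3\theta-(f_1+\i f_2)\theta^3\in\mathcal I$. So $\mathcal I$, and with it the CR structure, is unchanged.

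The main obstacle I expect is the first step: pinning down the signs and orientations. One must check that the vertical complex structure $I_0$ really corresponds to $\omega$ (and not $\bar\omega$) in the same ideal as $\theta$ for $I_{g,[\alpha]}$. This is exactly where using the cotangent \emph{sphere} bundle, with its coorientation, matters. I would fix this by an explicit check at one point of flat $\R^3$.
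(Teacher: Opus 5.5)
The paper gives no proof of this statement; it only attributes it to LeBrun. So your argument has to stand on its own, and it does: you take a genuinely different (because self-contained) route, a Cartan moving-frame verification on the orthonormal frame bundle.

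Your computations check out. With $\omega^i_j=g(\nabla e_j,e_i)$ one gets:
\begin{itemize}
\item $d\theta^3=\tfrac12(\omega\wedge\bar\theta+\bar\omega\wedge\theta)$;
\item $d\theta=\i\,\omega^1_2\wedge\theta-\omega\wedge\theta^3$;
\item $d\omega=\i\,\omega^1_2\wedge\omega+(\Omega^1_3+\i\,\Omega^2_3)$;
\item $\hat\omega^i_j=\omega^i_j+f_j\theta^i-f_i\theta^j$, which is forced by $d\hat\theta^i=-\hat\omega^i_j\wedge\hat\theta^j$ together with skew-symmetry.
\end{itemize}
Descent to $\mathbf{S}M$ is also fine. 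The forms $\theta^3,\theta,\omega$ vanish on the generator of the $\mathrm{SO}(2)$-action, where only $\omega^1_2\neq0$. Pulling the identities back along a local section of $F\to\mathbf{S}M$ therefore gives closure of the ideal downstairs. The result does not depend on the section, since $\theta^3$ is invariant and $\theta,\omega$ are only rescaled by $e^{-\i\phi}$.

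Compared with the bare citation, your route is elementary. It also shows why no curvature condition appears in dimension three: modulo $\theta^3$, the only horizontal $2$-form is $\theta^1\wedge\theta^2=\tfrac{\i}{2}\theta\wedge\bar\theta$, which is of type $(1,1)$.

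The loose end you single out, however, is not a sign to be fixed by checking one point of flat $\R^3$. Your own first bullet already decides it, and the answer is forced rather than conventional. Suppose the fibre structure were the one making $\bar\omega$ of type $(1,0)$. Then modulo $(\theta^3,\theta,\bar\omega)$ one has $d\theta^3\equiv\tfrac12\,\omega\wedge\bar\theta\neq0$. The Levi form then has a $(2,0)+(0,2)$ part, so the structure is non-integrable for \emph{every} metric, the flat one included. It is the first bullet that breaks; the second and third go through either way, contrary to your remark.

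Hence the theorem holds exactly for the fibre structure making $\omega$ of type $(1,0)$. On vertical vectors $\omega^a_3(\dot e_3)=g(\dot e_3,e_a)$, so this structure is $I_0(v)=\xi\times_g v$ on $T_\xi S_m$, where $\xi$ is the unit vector dual to $\alpha$. Equivalently, it is the round complex structure for the boundary orientation of the unit ball in $T_mM$, with the same rotation sense as $I_{g,[\alpha]}$ on $\alpha^\perp=\xi^\perp$. The paper's phrase ``unique integrable complex structure on $S_m$'' must be read with this orientation. Your write-up should state it explicitly and record that the conjugate choice fails.
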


The CR $5$-manifold $(\mathbf{S}M, \Lambda, J_g)$ is referred to as \emph{LeBrun's CR twistor space} of a conformal $3$-manifold $(M, [g])$.
\subsection{CR-holomorphicity of velocity maps}

Let $(M, g)$ be a Riemannian $3$-manifold and $\mathcal{X}=(X_{\bullet}, X, \pi, \ev)$ a family of knots on it. Let $I_X$ be a complex structure on $X$.

We saw in the last two subsections that in such a situation one can consider two different CR manifolds. First, the space of marked knots $X_{\bullet}$ possesses an almost CR structure $(\mathcal{H}, \widetilde{I_X})$. Second, there is LeBrun's CR twistor space $(\mathbf{S}M, \Lambda, J_g)$. They are, moreover, related by the velocity map $\vel \colon X_{\bullet} \to \mathbf{S}M$ and from the very definition of $\mathcal{H}$ it follows that $d\vel(\mathcal{H})\subseteq \Lambda$. Therefore, it is natural to ask, when the map $\vel$ is CR holomorphic. As it turns out, this is equivalent to family $\mathcal{X}$ being holomorphic (with respect to $I_X$). This result almost immediately follows from a result of Lempert (\cite{Lemp}) as we know explain.

A knot $\gamma \colon S^1 \to \mathbf{S}M$ is called \emph{transverse} if it is everywhere transverse to $\Lambda$. 
Observe that if $\gamma$ is transverse, then $dp(\dot{\gamma})$ is always non-zero.

Let $\gamma \colon S^1 \to \mathbf{S}M$ be a transverse knot. Let $\gamma(t)=(m, [\alpha])$. Here $[\alpha]$ is an oriented line in $T^*_mM$, or, equivalently, an oriented plane $\ker \alpha \subset T_mM$. Let $(e_1, e_2)$ be a positively oriented basis in $\ker \alpha$. Further, $\gamma$ is \emph{positive} if $(e_1, e_2, dp(\dot{\gamma}))$ constitute a positive oriented basis in $T_mM$ for some (equivalently, any) $t$ and $(e_1,e_2)$.

We say that a family of knots in $\mathbf{S}M$ is transverse (resp., positive) if every knot in this family is transverse (resp., positive).

The following proposition is \cite[Proposition 9.1]{Lemp}.
\begin{prop}\label{liftings criterion}
A knot $\gamma \colon S^1 \to \mathbf{S}M$ is an image of the velocity map of a knot in $M$ if and only if it is transverse and positive.
\end{prop}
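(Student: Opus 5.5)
We check the two directions separately, working in coordinates $\gamma(t)=(\beta(t),[\alpha(t)])$ with $\beta:=p\circ\gamma$, and using the description $\Lambda_{m,[\alpha]}=\{v \mid \alpha(dp(v))=0\}$ from Subsection~\ref{velocity basics}. The whole argument reduces to the sign of the scalar function $\alpha(t)(\dot\beta(t))$.

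\textbf{Necessity.} Suppose $\gamma=\vel\circ\beta$ for a knot $\beta$ in $M$, that is, $\alpha(t)=g(\dot\beta(t),-)$. Then $dp(\dot\gamma)=\dot\beta$, so
\[
\lambda(\dot\gamma)=\alpha(\dot\beta)=g(\dot\beta,\dot\beta)>0 .
\]
Hence $\dot\gamma\notin\Lambda$ everywhere, which is transversality (this is the same computation as in Proposition~\ref{ehresmann prop}). Now take a positively oriented basis $(e_1,e_2)$ of $\ker\alpha=\dot\beta^{\perp_g}$. The orientation conventions of Section~\ref{families-section} make $(e_1,e_2,\dot\beta)$ a positive basis of $T_{\beta(t)}M$, which is positivity.

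\textbf{Sufficiency.} Let $\gamma$ be transverse and positive. Transversality gives $\alpha(\dot\beta)\neq 0$, so $\dot\beta$ never vanishes and $\beta$ is an immersion. Positivity fixes the sign: $\alpha(\dot\beta)>0$ for every $t$. The plan is to show that $\alpha$ is then forced to agree with $g(\dot\beta,-)$ up to a positive scalar, so that $\gamma=\vel\circ\beta$. We would also need to check that $\beta$ is embedded, not just immersed; the hope is that injectivity of $\gamma$ on the fibres over each point gives this.

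\textbf{Main obstacle.} This last step is where the plan breaks down, and as I understand the statement, it fails. The two conditions only say that $[\alpha(t)]$ lies in the open hemisphere
\[
\{[\alpha] \mid \alpha(\dot\beta(t))>0\}\subset S_{\beta(t)}.
\]
They do not force $[\alpha(t)]$ to be the centre $[g(\dot\beta,-)]$ of that hemisphere. For an explicit counterexample, take a great circle $\beta$ in the round $S^3$ and tilt $[\alpha]$ slightly off $[g(\dot\beta,-)]$ towards a fixed normal field. The resulting knot is transverse and positive but is not a velocity image. So I would check Lempert's original formulation in \cite[Proposition~9.1]{Lemp} before proceeding. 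If the intended claim really is ``equal to a velocity image'', an extra hypothesis is needed, for instance the pointwise condition $\alpha\wedge g(\dot\beta,-)=0$, and under that hypothesis the direct computation above finishes the proof. The place where the literal wording fails is therefore the identification of the fibre coordinate $[\alpha(t)]$, and that is what I would flag.
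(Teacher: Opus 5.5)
Your diagnosis is correct. There is no argument in the paper to compare against, since it only cites \cite[Proposition 9.1]{Lemp}.

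\textbf{Why the statement fails as written.} With the paper's own definitions, transversality to $\Lambda$ together with positivity only says that $\alpha(t)(dp(\dot\gamma(t)))$ has a fixed sign. That is an open condition on $\gamma$. A velocity image, by contrast, is determined by its projection: the fibre coordinate must equal $[g(\dot\beta,-)]$, which is two real equations at each point.
\begin{itemize}
\item The \emph{only if} direction holds as you wrote it, up to the orientation convention for $\ker\alpha$, which the paper leaves implicit.
\item The \emph{if} direction is false, and your tilted great circle is a valid counterexample.
\end{itemize}
The counterexample is robust for three reasons:
\begin{itemize}
\item The tilt does not change the sign of $\alpha(\dot\beta)$. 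So the tilted curve is transverse and positive in whichever convention makes velocity images positive.
\item It is embedded, because its projection $\beta$ is.
\item It is not the velocity image of any knot. Such a knot would be a reparametrisation of $\beta$, which forces the fibre coordinate to be $[\pm g(\dot\beta,-)]$.
\end{itemize}
The same perturbation works for any knot in any $M$. So transverse positive knots form an open set, while velocity images have empty interior.

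The error enters through the paper's remark identifying Lempert's \emph{Legendrian} with \emph{transverse}. Whatever condition characterises velocity lifts must fix the fibre coordinate pointwise, and transversality to a hyperplane distribution cannot do that. The paper's paraphrase of Lempert's Corollary 9.6, a biholomorphism onto ``all transverse positive knots'', has the same defect.

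\textbf{A correction to your proposed repair.} Assume additionally $\alpha\wedge g(\dot\beta,-)=0$, so that positivity gives $[\alpha]=[g(\dot\beta,-)]$. Your computation then shows $\gamma=\vel\circ\beta$ with $\beta=p\circ\gamma$, but only as an \emph{immersion}. Your hope that injectivity of $\gamma$ yields embeddedness of $\beta$ is unfounded:
\begin{itemize}
\item Injectivity of $\gamma$ only rules out double points of $\beta$ at which the two velocities are positively proportional.
\item A crossing of $\beta$ with different directions lifts to two distinct points of $\mathbf{S}M$.
\end{itemize}
Hence the velocity lift of such an immersed curve is an embedded curve satisfying all your hypotheses, yet it is not the velocity image of a knot. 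The repaired statement must therefore either speak of singular (immersed) knots, as in the paper's introductory remark, or also assume that $p\circ\gamma$ is injective.
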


\begin{rmk}
The conventions of  the paper \cite{Lemp} are dual to ours, so Lempert calls \emph{Legendrian} what we call transverse and \emph{negative} what we call positive.
\end{rmk}

Let $\gamma \colon S^1 \to \mathbf{S}M$ be a transverse knot. Then its normal bundle $N_{\gamma/\mathbf{S}M}$ can be identified with $\gamma^*\Lambda$. The latter is a complex vector bundle with the complex structure operator $J_g$. This suggests that the space of transverse knots in $\mathbf{S}M$ is an almost complex Fr\'echet manifold in the spirit of subsection \ref{infinite-subsection}. Lempert proved that this is indeed the case, and, moreover, the resulting complex structure is formally integrable (\cite[Corollary 4.4]{Lemp}).

Further, one can imitate the construction of subsection \ref{holomorphic families subsec} and give a definition of a 
\emph{holomorphic family of transverse knots} in $\mathbf{S}M$. Namely, let $\mathcal{Y}=(Y_{\bullet}, Y, \pi_{\mathcal{Y}}, \ev_{\mathcal{Y}})$ be a family of transverse knots in $\mathbf{S}M$ (i.e. the evaluation map $\ev_{\mathcal{Y}}$ now goes from $Y_{\bullet}$ to $\mathbf{S}M$). The transversality condition guarantees that $\mathcal{H}_{\mathcal{Y}}:=d\ev_{\mathcal{Y}}^{-1}(\Lambda)$ is an Ehresmann connection on the fibration $Y_{\bullet} \xrightarrow{\pi_{\mathcal{Y}}} Y$. A tangent vector $v \in T_{y}Y$ admits a unique lifting $\widehat{v} \in \Gamma(\pi_{\mathcal{Y}}^{-1}(y), (\mathcal{H}_{\mathcal{Y}})|_{\pi_{\mathcal{Y}}^{-1}(y)})$. 

Let $I_Y$ be a complex structure on the base $Y$. We say that a family $\mathcal{Y}$ is \emph{holomorphic} (with respect to $I_Y$) if
\[
J_g\cdot d\ev_{\mathcal{Y}}(\widehat{v})=d\ev_{\mathcal{Y}}(\widehat{I_Y \cdot v)}
\]
for every tangent vector $v \in T_yY$.

\begin{thrm}[Lempert]\label{lempert theorem}
Let $(M,[g])$ be a conformal $3$-manifold. Let $\mathcal{X}=(X_{\bullet}, X, \pi, \ev)$ be a family of knots in $M$. Consider the transverse family of knots in $\mathbf{S}M$ given by
\[
\mathcal{X}^{\vel}:=(X_{\bullet}, X, \pi, \vel).
\]
(Here the space of the family is the same, but we replace the evaluation map $\ev \colon X_{\bullet} \to M$ with the velocity map $\vel \colon X_{\bullet} \to \mathbf{S}M$, thus treating it as a family of knots in $\mathbf{S}M$).

Let $I_X$ be a complex structure on $X$. Then $\mathcal{X}$ is an holomorphic family of knots in $(M,[g])$ with respect to $I_X$ if and only if $\mathcal{X}^{\vel}$ is an holomorphic  family of transverse knots in $(\mathbf{S}M, \Lambda, J_g)$ with respect to $I_X$.
\end{thrm}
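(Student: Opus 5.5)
We will prove the equivalence pointwise along a single knot, by comparing the two notions of ``horizontal lift'' and the two complex structures on normal bundles. Fix $x\in X$ and $v\in T_xX$, and let $\gamma=\gamma_x$. On the side of $M$, the lift $L(v)$ is the unique lift of $v$ along $F_x$ with $d\ev(L(v))\perp_g\dot\gamma$. On the side of $\mathbf{S}M$, the lift $\widehat v$ is the unique lift of $v$ lying in $\mathcal{H}=d\vel^{-1}(\Lambda)$ (Proposition \ref{ehresmann prop}). The first step is to check that these coincide. This holds because $d\vel(\widehat v)\in\Lambda_{\vel(y)}$ means precisely $g(\dot\gamma, dp\, d\vel(\widehat v))=0$, and $dp\circ d\vel=d\ev$. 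Hence $L(v)=\widehat v$, and $dp$ sends $d\vel(\widehat v)$ to $d\ev(L(v))$.

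Next we use the exact sequence (\ref{cr exact triple}), with $\alpha=g(\dot\gamma,-)$ and $\alpha^\perp=\dot\gamma^{\perp}$. The map $dp\colon(\Lambda,J_g)\to(\alpha^\perp,I_{g,[\alpha]})$ is complex linear by the definition $J_g=I_0\oplus s_*I_{g,[\alpha]}$. We must also check that $I_{g,[\alpha]}$ is the rotation $u\mapsto\dot\gamma\times_g u$, i.e. the positive rotation defining $I_{[g]}$ on $N_{\gamma/M}\cong\dot\gamma^\perp$. This is an orientation check: the co-orientation of $\ker\alpha$ given by $\alpha$ is $\dot\gamma$. This matches the positivity convention of Proposition \ref{liftings criterion}.

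\emph{Holomorphic in $\mathbf{S}M$ implies holomorphic in $M$.} Suppose $J_g\, d\vel(\widehat v)=d\vel(\widehat{I_Xv})$. Applying $dp$ gives $\dot\gamma\times_g d\ev(L(v))=d\ev(L(I_Xv))$, which is the holomorphicity criterion stated after Definition \ref{df holomorphic family}.

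\emph{Holomorphic in $M$ implies holomorphic in $\mathbf{S}M$.} This is the harder direction, since $dp$ forgets the vertical component in $\ker dp\cong T S_m$. Set $w:=J_g\,d\vel(\widehat v)-d\vel(\widehat{I_Xv})$. The hypothesis gives $dp(w)=0$, so $w$ is a vertical vector field along the lifted knot, and we must show $w=0$. The vertical part of $d\vel(\widehat v)$, measured with the Levi-Civita splitting $s$, is the covariant derivative of the normalized velocity along the variation. By symmetry of the connection, this equals, up to projection onto $\dot\gamma^\perp$ and rescaling by $|\dot\gamma|$, the normal part of $\nabla_{\dot\gamma}V$, where $V=d\ev(L(v))$. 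So the vertical component is a first-order differential operator $D$ in $V$ along $\gamma$.

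We then need the identity $D(\dot\gamma\times V)=I_0\,D(V)$. Here $I_0$ on $T_{[\alpha]}S_m\cong\alpha^\perp$ is again rotation by $\dot\gamma$, possibly up to a sign fixed by conventions. The identity should follow because $\nabla(\dot\gamma\times V)=\nabla\dot\gamma\times V+\dot\gamma\times\nabla V$, and the term $\nabla\dot\gamma\times V$ is tangent to $\gamma$, so it is killed by the normal projection. Verifying this computation, and in particular the sign of $I_0$ relative to the orientation conventions, is the main obstacle. An alternative is to invoke Lempert's result \cite{Lemp} directly: the velocity lift identifies $\Gamma(N_{\gamma/M})$ with $\Gamma(N_{\vel\gamma/\mathbf{S}M})$ compatibly with the complex structures.

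Finally, since $\vel$ is an immersion transverse to $\Lambda$, the CR-holomorphicity statements above are equivalent to the family statements, which completes the plan.
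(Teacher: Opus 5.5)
Your argument is correct, and it follows a different route from the paper. The paper does no computation here. It quotes Lempert's result \cite[Corollary~9.6]{Lemp} that the velocity map is a biholomorphism from $\Kn(M)$ onto the Fr\'echet manifold of positive transverse knots in $\mathbf{S}M$, and then restricts that biholomorphism to the finite-dimensional submanifold $X$.

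You instead verify the equivalence by hand along a single knot, in three steps:
\begin{itemize}
\item The two Ehresmann lifts coincide, because $dp\circ d\vel=d\ev$ and $\Lambda$ is cut out by $g(\dot\gamma,dp(\cdot))=0$.
\item The implication from $\mathbf{S}M$ to $M$ is just complex-linearity of $dp\colon(\Lambda,J_g)\to(\dot\gamma^{\perp},I_{g,[\alpha]})$.
\item The converse reduces to showing that the Levi-Civita vertical component $D(V)=|\dot\gamma|^{-1}(\nabla_{\dot\gamma}V)^{\perp}$ of $d\vel(\widehat v)$ commutes with the rotation $R$. To apply the symmetry lemma, realise $\widehat v$ through the $\mathcal{H}$-horizontal lift of a curve in $X$ with initial velocity $v$; then the $t$-curves of the resulting two-parameter map are knots of the family.
\end{itemize}
Your route gives a self-contained, finite-dimensional proof. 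It also shows why the vertical half of the $\mathbf{S}M$-condition imposes nothing new: that component is a first-order operator in $V$ which commutes with rotation. The paper's citation is shorter, but it rests entirely on Lempert's infinite-dimensional framework.

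Two points need tightening.

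First, the cross term. $\nabla_{\dot\gamma}\dot\gamma\times V$ is tangent to $\gamma$ only when $|\dot\gamma|$ is constant. In general $\nabla_{\dot\gamma}\dot\gamma$ has a component along $\dot\gamma$, and that component contributes a normal term. Work instead with $T=\dot\gamma/|\dot\gamma|$ and $RV=T\times V$. Then $\nabla_{\dot\gamma}T\perp T$ and $V\perp T$, so $\nabla_{\dot\gamma}T\times V$ is parallel to $T$, and you get $D(T\times V)=T\times D(V)$.

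Second, the sign of $I_0$ cannot be left to convention. The phrase ``the integrable complex structure on $S_m$'' determines $I_0$ only up to sign, and the correct sign is forced by LeBrun's integrability. Let $\theta=g(n,dp(\cdot))$ be the contact form and $K$ the Levi-Civita connection map. Torsion-freeness gives
$d\theta(\xi_1,\xi_2)=g(K\xi_1,dp\,\xi_2)-g(K\xi_2,dp\,\xi_1)$ on $\Lambda$.
Suppose $J_g$ acts by $R$ on the horizontal part and by $I_0=\epsilon R$ on the vertical part. Since $R$ is orthogonal on $n^{\perp}$, this gives $d\theta(J_g\cdot,J_g\cdot)=\epsilon\, d\theta$. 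Integrability requires the Frobenius tensor to be of type $(1,1)$, that is, $J_g$-invariant. Hence $\epsilon=+1$ and $I_0=R$, which is exactly the identity $D(RV)=I_0D(V)$ you need. With the opposite sign that identity, and with it the converse direction, would fail whenever $D(V)\neq 0$.
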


Lempert phrases this theorem in a slightly different way. He proves (\cite[Corollary 9.6]{Lemp}) that the velocity map determines a biholomorphism between the space of knots $\Kn(M)$ and the space of transverse positive knots in $\mathbf{S}M$. Theorem \ref{lempert theorem} is obtained by restricting to finite-dimensional submanifolds of these Fr\'echet manifolds.

\begin{cor}\label{cr holomoprhic criterion}
Let $\mathcal{X}=(X_{\bullet}, X, \pi, \ev)$ be a family of knots in a conformal $3$-manifold $(M,[g])$. Let $I_X$ be a complex structure operator on $X$. The family $\mathcal{X}$ is holomorphic with respect to $I_X$ if and only if the velocity map
\[
(X_{\bullet}, \mathcal{H}, \widetilde{I_X}) \to (\mathbf{S}M, \Lambda, J_g)
\]
is CR-holomorphic.
\end{cor}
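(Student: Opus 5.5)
The plan is to deduce the corollary from Theorem~\ref{lempert theorem} by showing that holomorphicity of the transverse family $\mathcal{X}^{\vel}$ in $(\mathbf{S}M,\Lambda,J_g)$ is literally the same condition as CR-holomorphicity of $\vel\colon (X_\bullet,\mathcal{H},\widetilde{I_X})\to(\mathbf{S}M,\Lambda,J_g)$. By Theorem~\ref{lempert theorem}, $\mathcal{X}$ is $I_X$-holomorphic if and only if $\mathcal{X}^{\vel}$ is, so it suffices to compare the two conditions on $\mathcal{X}^{\vel}$.

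First I identify the Ehresmann connections. For the family $\mathcal{X}^{\vel}$ the evaluation map is $\vel$, so its connection is $\mathcal{H}_{\mathcal{X}^{\vel}}=d\vel^{-1}(\Lambda)$. This is exactly the $\mathcal{H}$ of Proposition~\ref{ehresmann prop}. Hence for $v\in T_xX$ the lift $\widehat v$ is the unique section of $\mathcal{H}|_{F_x}$ with $d\pi(\widehat v)=v$. By the definition of $\widetilde{I_X}$ as the pullback of $I_X$ via the isomorphism $d\pi\colon\mathcal{H}\to\pi^*TX$, we get $\widehat{I_Xv}=\widetilde{I_X}\,\widehat v$ pointwise along $F_x$.

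Holomorphicity of $\mathcal{X}^{\vel}$ then reads $J_g\, d\vel(\widehat v)=d\vel(\widetilde{I_X}\widehat v)$ for all $x$ and $v$. Since every vector of $\mathcal{H}$ at a point $y\in F_x$ is $\widehat v(y)$ for $v=d\pi(u)$, this says $d\vel\circ\widetilde{I_X}=J_g\circ d\vel$ on $\mathcal{H}$. Here $d\vel(\mathcal{H})\subseteq\Lambda$ holds by construction.

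Finally, intertwining the complex structures on the real distributions is equivalent to $d\vel(\mathcal{H}^{1,0})\subseteq\Lambda^{1,0}$, by complexifying and using that the $\pm\i$ eigenspaces are preserved. This is the definition of CR-holomorphicity.

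The only point needing care is the comparison of lifts. Lempert's lift lives in the normal bundle $\gamma^*\Lambda\cong N_{\gamma/\mathbf{S}M}$, whereas ours lives in $\mathcal{H}\subset TX_\bullet$. I check that $d\vel$ maps $\mathcal{H}$ isomorphically onto $\Lambda$ along the knot modulo nothing. Indeed, transversality gives $\Lambda\oplus\R\dot\gamma=T\mathbf{S}M$ along $\gamma$, so $\Lambda$ is the canonical representative of the normal bundle and no projection ambiguity arises. This is routine but should be stated. Integrability of $I_X$ is not needed beyond the hypotheses.
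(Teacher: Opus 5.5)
Your proposal is correct and is essentially the paper's argument. The paper treats the corollary as an immediate unwinding of Theorem~\ref{lempert theorem}, using exactly that the connection of $\mathcal{X}^{\vel}$ is $\mathcal{H}=d\vel^{-1}(\Lambda)$ and that $\widetilde{I_X}$ is the pullback of $I_X$ along $d\pi|_{\mathcal{H}}$, so that $\widehat{I_Xv}=\widetilde{I_X}\widehat v$.

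One small slip: $d\vel$ does not in general map $\mathcal{H}$ isomorphically onto $\Lambda$, since their ranks are $\dim_{\R}X$ and $4$. What you need, and what your justification actually shows, is $d\vel(\mathcal{H})\subseteq\Lambda$ together with $\gamma^*\Lambda\cong N_{\gamma/\mathbf{S}M}$, which follows from transversality.
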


This is an analogue of the Proposition \ref{horizontal-conformality} for holomorphic families with base of dimension greater than one.

\section{Geometry of holomorphic families of knots}\label{geometry-of-families-section}

\subsection{Radon transform and K\"ahler forms}

 Fix a smooth manifold $M$ and a family of knots $\mathcal{X}=(X_{\bullet}, X, \pi, \ev)$ in it. There is an important procedure that relates differential $k$-forms on $M$ with differential $(k-1)$-forms on $X$. We refer to it as the \emph{Radon transform} due to its similarity to the eponymous operation in integral geometry. 
 
 First, recall that if $\eta$  is a $k$-form $\eta$ on $X_{\bullet}$, one can construct $(k-1)$-form $\pi_*\eta$ on $X$  by integration along fibres: 
\[
(\pi_*\eta)_x(v_1,\ldots, v_{k-1})=\int_{F_x} \eta\left(\widetilde{v_1}, \ldots, \widetilde{v_k}, \frac{\di}{\di t}\right)dt,
\]
where  $\frac{\di}{\di t}$ is a positive-oriented trivialisation of $(\ker d\pi)|_{F_x}$ and $\widetilde{v_i} \in \Gamma(\pi^{-1}(x), TX_{\bullet}|_{\pi^{-1}(x)})$ are lifts of $v_i$. 

Equivalently, one can associate to $\eta$ an $(n-k)$-dimensional current $\Delta_{\eta} \colon \phi \mapsto \int_{X_{\bullet}} \phi \wedge \eta$. Then one can check that the pushforward $\pi_*\Delta_{\eta}$ is again a regular current on $X$, so it can be represented by an integration against a form 
\[
\pi_*(\Delta_{\eta}) \colon \psi \mapsto \int_{X} \psi \wedge \pi_{*}\eta.
\]

The operation $\pi_* \colon \Omega^{\bullet}_{X_{\bullet}} \to \Omega^{\bullet-1}_X$ sends closed forms to closed and descends to a map of the de Rham cohomology groups $H^k(X_{\bullet}, \R) \to H^{k-1}(X, \R)$.  If $X$ is compact it coincides with the Gysin morphism. In particular, it sends integral cohomology classes to integral ones.

\begin{df}
The \emph{Radon transform} is the map
\[
R \colon \Omega^{\bullet}_M \to \Omega^{\bullet-1}_X, \ \eta \mapsto \pi_*\ev^*\eta.
\]
\end{df}

\begin{thrm}\label{kaehlerness of families}
Let $(M, g)$ be a smooth $3$-dimensional Riemannian manifold and $\mathcal{X}=(X_{\bullet}, (X, I_X), \pi, \ev)$ an holomorphic family of knots. Then the following holds:
\begin{itemize}
\item[(i)] the complex manifold $(X, I_X)$ is K\"ahler;
\item[(ii)] if $X$ and $M$ are compact, then $X$ is biholomorphic to (the analytification of) a smooth  complex projective variety.
\end{itemize}
\end{thrm}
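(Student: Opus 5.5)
The plan is to produce the Kähler form as the Radon transform of the Riemannian volume form. Let $\nu=\mathrm{vol}_g\in\Omega^3(M)$ and set $\omega:=R(\nu)=\pi_*\ev^*\nu\in\Omega^2(X)$. Since $d\nu=0$ and both $\ev^*$ and $\pi_*$ commute with $d$ (the fibres of $\pi$ are closed circles, so there are no boundary terms), $\omega$ is closed. It remains to show that $\omega$ is of type $(1,1)$ and positive with respect to $I_X$.

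For this I compute $\omega$ pointwise. Fix $x\in X$ and $u,v\in T_xX$. Use the lifts $L(u),L(v)$ along $F_x$ from Definition~\ref{df holomorphic family}, whose images under $d\ev$ are $g$-orthogonal to $\dot\gamma_x$. Then
\[
\omega_x(u,v)=\int_{F_x}\nu\bigl(d\ev L(u),d\ev L(v),\dot\gamma_x(t)\bigr)\,dt .
\]
Any other choice of lifts differs by vertical vectors, which contribute nothing, since $\nu$ already contains $\dot\gamma_x$. Pointwise, the integrand equals $|\dot\gamma_x|\,\sigma_t(\delta_x u,\delta_x v)$, where $\sigma_t$ is the area form on the oriented normal plane $\dot\gamma_x^{\perp}$, up to a fixed sign convention from the orientation. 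The complex structure $I_{[g]}$ is rotation by $\pi/2$ in this plane, so $\sigma_t(a,b)=g(I_{[g]}a,b)$ for normal vectors $a,b$.

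The holomorphicity condition $\delta_x(I_Xu)=I_{[g]}\delta_x(u)$ then gives
\[
\omega_x(I_Xu,I_Xv)=\omega_x(u,v),
\]
because a rotation preserves area; hence $\omega$ is of type $(1,1)$. It also gives
\[
\omega_x(u,I_Xu)=\int_{F_x}|\dot\gamma_x|\,\bigl|\delta_x u(t)\bigr|^2_g\,dt\ \ge 0,
\]
with equality only if the section $\delta_x u$ vanishes identically. Injectivity of $\delta_x$ (condition (iii) of Definition~\ref{definition of a family}) then forces $u=0$. So $\omega$ is a Kähler form, possibly after replacing it by $-\omega$ depending on the orientation convention, which proves (i).

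For (ii), I use the fact that when $X$ is compact, $\pi_*$ on cohomology is the Gysin map and therefore preserves integral classes. If $M$ is compact and connected, rescale $g$ so that $\int_M\nu=1$. Then $[\nu]\in H^3(M,\Z)$ is integral, so $[\ev^*\nu]$ is integral on $X_\bullet$, and $[\omega]=\pi_*[\ev^*\nu]\in H^2(X,\Z)$ (modulo torsion) is integral as well. A compact Kähler manifold with a Kähler class in the image of integral cohomology is projective by the Kodaira embedding theorem, and GAGA identifies $X$ with the analytification of a smooth projective variety.

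The main obstacle is the pointwise identification of the integrand with $g(I_{[g]}\delta u,\delta v)$, which carries the sign and orientation bookkeeping. The other delicate point is checking that $\pi_*$ genuinely maps into integral classes, not merely rational ones; rationality is already enough for Kodaira after scaling, so this can be bypassed if needed.
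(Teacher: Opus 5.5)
Your proposal is correct and follows the paper's proof. Both arguments take $\omega=R(\nu_g)$, use the orthogonal lifts together with $\delta_x(I_Xv)=I_{[g]}\delta_x(v)$ to show that $\omega$ is a closed positive form, and then rescale $\nu_g$ so that $[\omega]$ is integral and apply Kodaira. Your explicit check of $I_X$-invariance and your use of injectivity of $\delta_x$ to get non-degeneracy actually supply steps the paper only asserts, since it attributes non-degeneracy simply to $\nu_g$ being non-degenerate.
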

\begin{proof}

\textit{(i)}. Let $\nu_g$ be the Riemannian volume form on $M$ and $\omega_g:=R(\nu_g)$ its Radon transform. We claim that $\omega_g$ is a K\"ahler form on $(X, I_X)$. If  $v_1$ and $v_2$ are two vectors in $T_xX$, then $\omega_g(v_1,v_2)$ can be computed as 
\[\omega_g(v_1,v_2)=\int_{S^1}\gamma_x^*\nu_g(\delta_x(v_1),\delta_x(v_2), \dot{\gamma_x}) dt.
\]

The form $\omega_g$ is closed and non-degenerate because $\nu_g$ is. What is left to check is that $(u, v) \mapsto \omega_g(u, I_Xv)$ defines a positive symmetric $2$-form  on $X$. Indeed, let $v$ be a non-zero vector in $T_xX$. Then
\[
\omega_g(v, I_Xv)=\int_{F_x} {\nu_g}{\left(\delta(v), {\dot{\gamma_x}}{\times_g}{\delta(v)}, \dot{\gamma_x}\right)}dt.
\]
For any pair of non-zero vectors $u_1, u_2$ in the 3-dimensional Euclidean space the triple $(u, v \times u, v)$ defines a positive-oriented basis. Thus the right-hand side of the equality above is positive.
\\

\textit{(ii)}. Suppose now that $X$ is compact. By rescaling $\nu_g$ if necessary we may assume that the class $[\nu_g] \in H^3(M, \R)$  is integral. Therefore the class $[\omega_g]$ is also integral. Thus, $X$ admits a K\"ahler form with integral cohomology class. By Kodaira's Embedding Theorem, $X$ is biholomorphic to a projective variety (see e.g. \cite[Theorem 7.11]{Vois}).
\end{proof}

\begin{rmk}
Let $\Sigma \subset M$ be a smooth surface. To it one can associate the locus $L_{\Sigma}=\pi(\ev^{*}(\Sigma)) \subset X$ of knots lying on $\Sigma$. This locus is Lagrangian with respect to the symplectic form $\omega_g$, in particular it is never holomorphic.
\end{rmk}

An important corollary of Theorem \ref{kaehlerness of families} is the following.

\begin{lemma}\label{surjectivity of evaluation}
Let $(M,g)$ be a compact  $3$-dimensional Riemannian manifold. Consider an holomorphic family of knots $(X_{\bullet}, X, \pi, \ev)$ in it. Suppose that $X$ is compact and $\dim X >0$. Then the evaluation map $\ev \colon X_{\bullet} \to M$ is surjective.
\end{lemma}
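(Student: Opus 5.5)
The plan is to argue by contradiction, using the K\"ahler form $\omega_g=R(\nu_g)$ from Theorem~\ref{kaehlerness of families}. Suppose $\ev$ is not surjective. Since $X_{\bullet}$ is compact ($X$ compact, fibres circles), the image $\ev(X_{\bullet})$ is a compact subset of $M$, so its complement $W$ is a nonempty open set. The idea is to exploit that the Radon transform only sees $\nu_g$ along the image of $\ev$. In particular, $\omega_g$ depends only on the restriction of $\nu_g$ to $\ev(X_{\bullet})$.

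First I will replace $\nu_g$ by a cohomologically trivial volume-type form. Choose a $3$-form $\nu'$ on $M$ that agrees with $\nu_g$ on a neighbourhood of $\ev(X_{\bullet})$, is supported away from a small ball $B\subset W$ elsewhere, and satisfies $\int_M \nu' = 0$. Concretely, take $\nu'=\nu_g-\rho$ with $\rho$ a bump $3$-form supported in $B$ and $\int_B\rho=\int_M\nu_g$. (If $M$ is non-orientable one passes to the orientation cover, or works locally; $M$ is assumed oriented in the paper's setup.) Then $[\nu']=0$ in $H^3(M,\R)$, since $M$ is a compact connected oriented $3$-manifold, so $\nu'=d\beta$ for some $2$-form $\beta$. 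On the other hand $\ev^*\nu'=\ev^*\nu_g$, since $\rho$ vanishes on the image. Hence $R(\nu')=R(\nu_g)=\omega_g$.

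Next, the Radon transform commutes with $d$ up to sign. Indeed, $\ev^*$ commutes with $d$, and fibre integration along a closed oriented circle bundle over a closed base commutes with $d$ up to sign, because the fibres have no boundary. So $\omega_g=R(d\beta)=\pm d\,R(\beta)$ is exact on $X$. But $X$ is compact of complex dimension $n\ge1$, so $\int_X\omega_g^n>0$ because $\omega_g$ is K\"ahler. Exactness of $\omega_g$ would force $\int_X\omega_g^n=\int_X d(R(\beta)\wedge\omega_g^{n-1})=0$. This is a contradiction, so $\ev$ is surjective.

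The main obstacle is the sign and commutation check that $\pi_*$ commutes with $d$ for an $S^1$-bundle without boundary. This is the standard fibre-integration formula, since the boundary term vanishes, and the paper already notes that $\pi_*$ sends closed forms to closed forms. One should also verify that modifying $\nu_g$ off the image does not change $\omega_g$, which is immediate from the formula $\omega_g(v_1,v_2)=\int\nu_g(\delta v_1,\delta v_2,\dot\gamma)$: the integrand is evaluated only at points of $\ev(X_{\bullet})$. Connectedness of $M$ is needed so that the top-degree class of $\nu'$ is determined by its integral.
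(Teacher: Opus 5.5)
Your proof is correct. It uses the same underlying mechanism as the paper: the class $[\omega_g]=\pi_*\ev^*[\nu_g]$ is non-zero because $\omega_g$ is K\"ahler on a compact manifold, and this is incompatible with $\ev$ missing a point. The execution, however, is different.

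\textbf{The paper's route.} It first treats $\dim_{\C}X=1$. There $X_{\bullet}$ is a closed $3$-manifold, so $\ev^*[\nu_g]\neq 0$ means $\ev$ has non-zero degree and is therefore surjective. For $\dim_{\C}X>1$ it invokes projectivity (part (ii) of Theorem~\ref{kaehlerness of families}, i.e.\ Kodaira embedding). This produces a smooth compact curve $Y\subset X$, and the family is restricted to $Y$ via Proposition~\ref{subfamilies}.

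\textbf{Your route.} You prove ``non-surjective $\Rightarrow$ $[\omega_g]=0$'' directly at the level of forms. You replace $\nu_g$ by the exact form $\nu_g-\rho$, which has the same pullback under $\ev$, and then use that $R$ commutes with $d$ up to sign. This works verbatim in every dimension.

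\textbf{What each buys.} Your argument needs only part (i) of Theorem~\ref{kaehlerness of families}. It avoids the reduction to curves, so it needs neither the Kodaira embedding nor the existence of smooth curves in $X$. The paper's version is shorter because it simply quotes the degree argument. Yours is more self-contained and slightly more general. For instance, the same computation shows that a non-compact connected oriented $M$ admits no compact positive-dimensional holomorphic family at all, since there $\nu_g$ is already exact.

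As you note, connectedness of $M$ is genuinely needed; the paper assumes it only implicitly.
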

\begin{proof}
Suppose first that $\dim_{\C} X = 1$. Then $\ev \colon X_{\bullet} \to M$ is a smooth map between compact $3$-dimensional manifolds.  To prove that it is surjective, it is enough to check that $\ev^{*}[\nu_g] \neq 0$ where $[\nu_g]$ is the cohomology class of the Riemannian volume form on $(M, g)$. But $\omega_g=R(\nu_g)=\pi_*\ev^*\nu_g$ is a K\"ahler form on the compact complex manifold $X$, so its cohomology class is non-zero. Therefore, $\ev^*[\nu_g]\neq 0$ as well.

Assume $\dim_{\C}X>1$. Since $X$ is projective, one can find a smooth closed holomorphic curve $Y \subset X$. Then the evaluation map is surjective already after restriction to $Y_{\bullet}:=\pi^{-1}(Y)$.
\end{proof}

If one drops the compactness assumption, the statement of Lemma \ref{surjectivity of evaluation} still holds locally.

\begin{prop}\label{local surjectivity}
Let $(M,g)$ be a $3$-dimensional Riemannian manifold and $(X_{\bullet}, X, \pi, \ev)$ an holomorphic family of knots, $\dim_{\C} X > 0$. Let $m \in M$ be a non-critical value of $\ev$. Then there exists a neighbourhood $U$ of $m$ such that $\ev^{-1}(U) \to U$ is surjective.
\end{prop}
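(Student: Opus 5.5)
The plan is to reduce the statement to the local submersion theorem. We interpret ``non-critical value'' as follows: $m$ lies in the image of $\ev$, and $d\ev$ is surjective at some point $y \in \ev^{-1}(m)$. If $m$ is a regular value in the strict sense, every preimage point has this property; if $m$ is not in the image, the statement is empty. The proof then has two steps.

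\emph{Step 1: the regular points of $\ev$ form a large set.} Fix $x \in X$ and choose $v \in T_xX$ with $v \neq 0$; this is possible since $\dim_{\C}X>0$. By injectivity of $\delta_x$ (Definition \ref{definition of a family}(iii)), the section $\delta_x(v)$ of $N_{\gamma_x/M}$ is not identically zero. Pick a point $y = (x,t) \in F_x$ at which $\delta_x(v)(t) \neq 0$. Use the orthogonal lift $L(v)$ from the explicit reformulation of holomorphicity after Definition \ref{df holomorphic family}. Then $d\ev(L(v))_y$ is a nonzero vector orthogonal to $\dot{\gamma}_x(t)$, and holomorphicity gives
\[
d\ev(L(I_Xv))_y = \dot{\gamma}_x(t)\times_g d\ev(L(v))_y .
\]
This vector is also nonzero and orthogonal to both $\dot{\gamma}_x(t)$ and $d\ev(L(v))_y$. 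Together with $d\ev(\partial_t) = \dot{\gamma}_x(t) \neq 0$, these three vectors form a basis of $T_{\ev(y)}M$. Hence $d\ev_y$ is surjective. In particular every knot of the family passes through points where $\ev$ is a submersion, so non-critical values in the above sense exist and are plentiful.

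\emph{Step 2: openness.} Let $y \in \ev^{-1}(m)$ be a point where $d\ev_y$ is surjective. By the local submersion theorem there are a neighbourhood $W$ of $y$ in $X_{\bullet}$ and a neighbourhood $U$ of $m$ in $M$ such that $\ev|_W \colon W \to U$ is a surjective submersion; in local coordinates it is a linear projection $\R^{\dim X_{\bullet}} \to \R^3$. Therefore $\ev(\ev^{-1}(U)) \supseteq \ev(W) = U$, which is the claim.

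\emph{Expected obstacle.} The only real subtlety is interpreting the hypothesis correctly. Step 2 needs a regular point inside the fibre over $m$; this is automatic when $m$ is a regular value lying in the image. Step 1 is what makes the statement meaningful rather than vacuous, since it guarantees such points exist at all. Its one delicate point is that $\delta_x(v)$ vanishes only at some points of the knot, not identically, which is exactly where injectivity of $\delta_x$ is used.
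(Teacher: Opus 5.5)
Your proof is correct and is essentially the paper's. The paper also shows that $d\ev$ is surjective at a point $(x,t)$ over $m$ because $\dot{\gamma}_x(t)$, $\delta(v)(t)$ and $\delta(I_Xv)(t)=\dot{\gamma}_x(t)\times_g\delta(v)(t)$ span $T_mM$, and then concludes by the local submersion theorem; the only difference is that you get a point with $\delta_x(v)(t)\neq 0$ from injectivity of $\delta_x$ (showing every knot carries regular points of $\ev$), whereas the paper takes such a $v$ directly from the non-criticality of $m$.
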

\begin{proof}
Let $x_{\bullet}=(x, t)$ be a point in $X_{\bullet}$ such that $\ev(x_{\bullet})=m$. In other words, $m=\gamma_x(t)$. It is sufficient to show that the differential $d\ev|_{x_{\bullet}} \colon T_{x_{\bullet}}X_{\bullet} \to T_mM$ is surjective. Notice that  $\rk d\ev|_{x_{\bullet}} \ge 1$, because it does not vanish restricted on $\ker d\pi|_{x_{\bullet}} = \R\dot{\gamma_x}$. Let $v \in T_xX$ be a tangent vector such that $\delta(v)(t) \neq 0$. Such $v$ exists because we assumed $m$ to be non-critical. Then $\delta(v)(t), \dot{\gamma_x}$ and $\delta(I_Xv)(t)=\dot{\gamma_x} \times_g \delta(v)(t)$ are three linearly independent vectors in $T_mM$ in the image of $d\ev$. Thus $\rk d\ev|_{x_{\bullet}} \ge 3=\dim T_mM$.
\end{proof}

\subsection{Pinned loci}\label{pinned loci subsec}
We say that an holomorphic family of knots $\mathcal{X}=(X_{\bullet}, (X, I_X), \pi, \ev)$ in a Riemannian $3$-manifold $(M,g)$ is \emph{analytic} if there exist real analytic structures on $M$ and $X_{\bullet}$ such that $\ev \colon X_{\bullet} \to M$ and $\pi \colon X_{\bullet} \to X$ are analytic. Starting from now we restrict ourselves to study of analytic families of knots. 

We are not aware of any example of a non-analytic holomorphic family of knots and such an assumption seems to be a minor limitation.

Recall the following result of Reiffen.
\begin{thrm}[Reiffen; \cite{Reiff}, Satz 2']\label{reiffen thrm}
Let $X$ be a compact manifold and $Y \subset X$ a real analytic subvariety such that its smooth locus is a complex submanifold. Then $Y$ is complex analytic.
\end{thrm}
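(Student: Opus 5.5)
We will follow the standard route through Chow's theorem, with Reiffen's original argument as a fallback. Since $X$ is compact Kähler, in fact projective by Theorem \ref{kaehlerness of families}, and in the application $Y$ will sit inside such an $X$, the statement can be reduced to showing that $Y$ is a closed complex analytic subset. We first check that $Y$ is closed. A real analytic subvariety is by definition locally the common zero set of finitely many real analytic functions, so it is closed in $X$ and hence compact. Next we stratify $Y$: its singular locus $\operatorname{Sing}(Y)$ is again a real analytic subvariety, of strictly smaller dimension. The hypothesis says that $Y^{\mathrm{reg}} = Y\setminus \operatorname{Sing}(Y)$ is a complex submanifold of some pure complex dimension $k$ on each connected component.

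The main step is a removable-singularity argument. We argue by induction on $\dim_{\R}\operatorname{Sing}(Y)$. The real dimension of $\operatorname{Sing}(Y)$ is at most $2k-1$, so it has locally finite $(2k-1)$-dimensional Hausdorff measure; in particular its $2k$-dimensional Hausdorff measure vanishes. Since $Y$ is compact and semianalytic, $Y^{\mathrm{reg}}$ has locally finite $2k$-dimensional volume near $\operatorname{Sing}(Y)$. We then apply Bishop's extension theorem (or Shiffman's theorem) to $Y^{\mathrm{reg}}$, viewed as a pure $k$-dimensional analytic subset of $X\setminus \operatorname{Sing}(Y)$. Shiffman's version states that if $E$ is closed with $\mathcal{H}^{2k-1}(E)=0$, then a pure $k$-dimensional analytic set in $X\setminus E$ has analytic closure. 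Because $\operatorname{Sing}(Y)$ may have $\mathcal{H}^{2k-1}>0$, we prefer Bishop's criterion, which only needs finite volume near $E$ together with $E$ being contained in a complex analytic set of smaller dimension. For this we use the induction: stratify $\operatorname{Sing}(Y)$ by the real analytic loci where it fails to be a complex submanifold, and peel off strata in order of decreasing dimension. This shows that the closure of $Y^{\mathrm{reg}}$, which equals $Y$ up to lower-dimensional components, is complex analytic. Any lower-dimensional real analytic components of $Y$ that are not in this closure are treated by the same induction, since their smooth loci are assumed complex.

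We expect the obstacle to lie in the strata of $\operatorname{Sing}(Y)$ that are real analytic but not complex, for instance real curves of dimension $2k-1$. Bishop's theorem needs these to be contained in a thin complex analytic set, and this containment is not automatic. Our first attempt will be Remmert--Stein combined with the finite-volume bound of Bishop's theorem in the version that needs only a closed set $E$ of zero $2k$-measure and locally finite volume of $Y^{\mathrm{reg}}$. This version is exactly Bishop's theorem for closed sets $E$ that are complete pluripolar, and real analytic sets of dimension less than $2k$ are locally contained in such sets. Once $Y$ is known to be a closed analytic subset of the projective manifold $X$, Chow's theorem additionally gives that it is algebraic, although the statement only requires analyticity.
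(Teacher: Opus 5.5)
The paper does not prove this statement; it only cites Reiffen. Judged on its own, your proposal has a genuine gap, and it sits exactly at the step you flag: getting across the $(2k-1)$-dimensional part of $\operatorname{Sing}(Y)$.

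\emph{The closed disc passes your argument.} Take $\overline{\Delta}=\{w=0,\ |z|\le 1\}\subset\C^2\subset\CP^2$. It is compact, semianalytic and of finite area. Its smooth locus is the open disc, a complex curve. Its singular locus is a real analytic circle of dimension $1=2k-1$, contained in the complete pluripolar set $\{w=0\}$. Every step of your argument goes through for it, yet it is not complex analytic.

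\emph{Why the extension step fails.} The version of Bishop's theorem you end with (closed $E$ with $\mathcal{H}^{2k}(E)=0$ plus locally finite volume) is false, as the disc shows. The pluripolar form of Bishop's theorem needs $E$ \emph{itself} to be locally complete pluripolar; being contained in such a set is not enough. The circle is not locally complete pluripolar: a psh function equal to $-\infty$ on an arc of it is identically $-\infty$ on the line $\{w=0\}$ near that arc. Your containment claim also fails in general: for $k=2$, the totally real $\R^3\subset\C^3$ is a real analytic set of dimension $3<4$ that is not even pluripolar.

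The only hypothesis $\overline{\Delta}$ violates is that $Y$ itself be real analytic. Your argument uses that hypothesis only for closedness and for the dimension of $\operatorname{Sing}(Y)$, so no sharper extension theorem can rescue it.

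\emph{The induction does not close.} $\operatorname{Sing}(Y)$ is in general only semianalytic; for the Whitney umbrella it is a closed half-line. Its smooth points need not be complex either. So your inductive hypothesis never applies to $\operatorname{Sing}(Y)$ or to the strata you peel off.

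\emph{The missing idea} is to use real analyticity of $Y$ through the identity principle. If a connected complex manifold $\tilde P$ has a non-empty open subset contained in $Y$, then every real analytic local equation of $Y$ vanishes on $\tilde P$, so $\tilde P\subset Y$. This is what forbids edges like $\partial\overline{\Delta}$. One way to use it:
\begin{enumerate}
\item Take a generic point $q$ of a $(2k-1)$-dimensional stratum $S$ on which a complex $k$-dimensional sheet $P$ of $Y^{\mathrm{reg}}$ terminates (generic meaning outside a lower-dimensional subset of $S$). There $\overline{P}$ is a $C^1$ manifold with boundary $S$, and its tangent planes along $S$ are complex.
\item Hence $S$ is a real analytic CR manifold of CR dimension $k-1$. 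The sheet $P$ then extends holomorphically across $S$ to a complex manifold $\tilde P$ through $q$: by Schwarz reflection when $k=1$, and by holomorphic extension of real analytic CR boundary values in general.
\item The identity principle gives $\tilde P\subset Y$. So near $q$, the closure $Z$ of the $2k$-dimensional part of $Y^{\mathrm{reg}}$ is a finite union of complex manifolds.
\item Therefore the locus where $Z$ fails to be complex analytic has real dimension at most $2k-2$. At that point Shiffman's theorem does apply and shows $Z$ is analytic.
\item Lower-dimensional pieces are treated the same way, always applying the identity principle to $Y$ itself rather than to a stratum.
\end{enumerate}

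Finally, Chow's theorem and the projectivity of $X$ play no role here: the statement is local and holds in any complex manifold.
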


 As usual, for a point $x \in X$ we denote $F_x:=\pi^{-1}(x)$ and $\gamma_x:=\ev|_{F_x}$

\begin{df}
Fix a point $m \in M$. A \emph{locus of knots pinned at $m$} (or, simply, a \emph{pinned locus}) is the set of the form
\[
D_m:=\{x\in X \ | m \in \im \gamma_x\}.
\]
\end{df}

Let $m$ be a point in $M$ and  $\widetilde{D}_m:=\ev^{-1}(m) \subseteq X_{\bullet}$. Then $D_m=\pi(\widetilde{D}_m)$. 

Similarly, for a finite collection of pairwise distinct points $\mathbf{m}=\{m_1, \ldots, m_k\}$ in $M$ one can define the \emph{$k$-pinned locus} $D_{\mathbf{m}}=D_{m_1,\ldots, m_k}$ as the set of knots $[\gamma]\in X$ that pass simultaneously through $m_1, \ldots, m_k$. In other words,
\[
D_{\mathbf{m}}= \bigcap_{m \in \mathbf{m}} D_m.
\]

Clearly, if $\mathbf{m}' \subseteq \mathbf{m}$ then $D_{\mathbf{m}} \subseteq D_{\mathbf{m}'}.$

Since $\widetilde{D}_m$ is a fibre of the analytic map $\ev$ and $D_m$ is its image under the proper analytic map $\pi$, they both are analytic. Let $D^{\circ}_m \subseteq D_m$ (respectively, $\widetilde{D}^{\circ}_m$) be their smooth loci.

Observe that  $\widetilde{D}_m \to D_m$ is a bijection and over a dense open analytic subset $\widetilde{D}^{\circ}_m \to D^{\circ}_m$ is a real analytic isomorphism. Its inverse gives a section of the $S^1$-bundle $(D_m)_{\bullet}:=\pi^{-1}(D_m) \to D_m$.

Recall, that the complex structure operator $I_X$ on $X$ determines an almost CR structure $(\mathcal{H}, \widetilde{I_X})$ on $X_{\bullet}$, where the distribution $\mathcal{H}$ is a certain Ehresmann connection on the fibre bundle $X_{\bullet} \to X$ (Corollary \ref{cr on marked knots}).

\begin{lemma}\label{tambovian properties}
The following holds:
\begin{enumerate}
\item \label{holomorphic} $\widetilde{D}^{\circ}_m$ is a CR holomorphic submanifold of $(X_{\bullet}, \mathcal{H}, \widetilde{I_X})$ of CR corank zero. This means that $T\widetilde{D}^{\circ}_m$ is contained in the restriction of $\mathcal{H}$ and preserved by $\widetilde{I_X}$. Moreover, the restriction of $\widetilde{I_X}$ to $T\widetilde{D}^{\circ}_m$ defines an integrable complex structure on $\widetilde{D}^{\circ}_m$ and the projection $\widetilde{D}^{\circ}_m \to \pi(\widetilde{D}^{\circ}_m)\subseteq X$ is holomorphic;
\item \label{analytic} $D_m$ is a closed complex analytic subvariety of $X$;
\item \label{dimension}if $m \in M$ is not a critical value of $\ev$, then $\dim_{\C}D_m=\dim_{\C}X -1$, i.e. $D_m$ is a divisor;
\item \label{homology class} if $X$ is compact of complex dimension $n$ and $m,  m'$ are two non-critical values of $\ev$, then $[D_{m}]=[D_{m'}]$ in $H_{2n-2}(X, \Q)$.
\end{enumerate}

\end{lemma}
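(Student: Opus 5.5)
For part (1), take a point $x_\bullet=(x,t)\in\widetilde D^\circ_m$ and a tangent vector $w\in T_{x_\bullet}\widetilde D^\circ_m$. Since $\widetilde D_m=\ev^{-1}(m)$, we have $d\ev(w)=0$. The velocity map $\vel$ satisfies $p\circ\vel=\ev$, so $dp(d\vel(w))=0$. Hence $d\vel(w)$ is vertical in $\mathbf{S}M$, and in particular lies in $\Lambda$, because $\Lambda$ contains $\ker dp$. It follows that $w\in\mathcal H=d\vel^{-1}(\Lambda)$.

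Next we check that $\widetilde{I_X}w$ is again tangent to $\widetilde D^\circ_m$. Write $w=L(v)$, where $v=d\pi(w)$; this is the horizontal lift for $\mathcal H$, evaluated at $t$. By the explicit form of holomorphicity after Definition \ref{df holomorphic family}, $d\ev(L(I_Xv))(t)$ equals $\dot\gamma_x(t)\times_g d\ev(L(v))(t)=0$. One point needs care: the lift $L$ there is the $g$-orthogonal lift, whereas $\mathcal H$ is defined via $\Lambda$. However $\Lambda$ at $\vel$ means exactly that $d\ev$ is orthogonal to $\dot\gamma$, so the two lifts agree. Thus $\widetilde{I_X}w=L(I_Xv)$ lies in $\ker d\ev$. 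Since $\widetilde D^\circ_m$ is a smooth level set, we may take its tangent space to be $\ker d\ev$ on a dense open set; on the rest we pass to the analytic tangent cone, or restrict to the locus where the rank is constant. So $T\widetilde D^\circ_m$ is $\widetilde{I_X}$-invariant.

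Because $d\pi$ is injective on $\mathcal H$ and intertwines $\widetilde{I_X}$ with $I_X$, the projection $\pi|_{\widetilde D^\circ_m}$ is an immersion whose image has $I_X$-invariant tangent spaces. Its image is therefore an almost complex submanifold of the complex manifold $X$, hence a complex submanifold. The induced structure on $\widetilde D^\circ_m$ is the pullback of the structure on this image, so it is integrable and $\pi$ is holomorphic.

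Part (2) follows from part (1) together with Theorem \ref{reiffen thrm}. $D_m=\pi(\widetilde D_m)$ is a closed real analytic subvariety, being the proper image of an analytic fibre, and one uses subanalytic-to-analytic arguments where needed. Its smooth locus is contained in the closure of $\pi(\widetilde D^\circ_m)$ up to lower-dimensional strata, and is a complex submanifold because $\widetilde D_m\to D_m$ is a bijection which is an analytic isomorphism on a dense open set. Reiffen then gives that $D_m$ is complex analytic. I expect the main obstacle to be the possible mismatch between the smooth loci of $D_m$ and of $\widetilde D_m$. If $X$ is not compact, I would apply Reiffen's theorem locally.

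For part (3), suppose $m$ is a regular value. Then $\widetilde D_m$ is a smooth submanifold of $X_\bullet$ of real codimension $3$, so it has dimension $2n+1-3=2n-2$. By part (1) it injects into $X$ by a holomorphic immersion, so $\dim_\C D_m=n-1$.

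For part (4), $\widetilde D_m$ is the preimage of a regular value, and its fundamental class is Poincaré dual to $\ev^*[\mathrm{pt}]$. Since $M$ is connected, $[\mathrm{pt}]=[\mathrm{pt}']$, so $[\widetilde D_m]=[\widetilde D_{m'}]$ in $H_{2n-2}(X_\bullet)$. Pushing forward by $\pi$, which is a degree-one map onto its image, gives $[D_m]=[D_{m'}]$. Orientations agree with the complex orientations by the holomorphicity in part (1). A cleaner equivalent formulation is that $[D_m]$ is Poincaré dual to $\pi_*\ev^*[\mathrm{PD}(\mathrm{pt})]$, which is independent of $m$.
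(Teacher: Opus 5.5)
Your proposal follows the paper's proof part by part:
\begin{itemize}
\item for (1), tangency of $\widetilde D^{\circ}_m$ to $\ker d\ev\subseteq\mathcal H$ together with the holomorphicity condition;
\item for (2), Reiffen's theorem;
\item for (3), the dimension count at a regular value;
\item for (4), the fibre class Poincar\'e dual to $\ev^*[\mathrm{pt}]$, pushed forward by $\pi$.
\end{itemize}
The details you add are correct: $\ker dp\subseteq\Lambda$ gives $\ker d\ev\subseteq\mathcal H$, the $g$-orthogonal lift coincides with the $\mathcal H$-lift, and the orientation check in (4) is right. At any point where $d\ev$ is onto, $\widetilde D_m$ is locally a regular level set with $T\widetilde D_m=\ker d\ev$, and your argument is complete there. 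This covers every regular value $m$, which is the situation of (3), (4) and of the later applications.

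\textbf{The gap at critical points.} The step that does not go through as written is the identification $T\widetilde D^{\circ}_m=\ker d\ev$ at critical points. The paper's proof has the same gap: it simply asserts that a horizontal vector is tangent to $\widetilde D^{\circ}_m$ iff it lies in $\ker d\ev$.

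Holomorphicity forces $\rk d\ev\in\{1,3\}$, because $d\ev(\mathcal H_{x_\bullet})$ is a complex subspace of the normal line. On a connected component of $\widetilde D^{\circ}_m$ that meets the rank-$3$ locus, your patch works. By analyticity that locus is dense in the component, $T\widetilde D^{\circ}_m=\ker d\ev$ there, and $\widetilde{I_X}$-invariance is a closed condition.

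However, a component may lie entirely in the rank-$1$ locus. There $\ker d\ev=\mathcal H_{x_\bullet}$ has real dimension $2n$, while the component can be smaller. For example, let $X$ be a piece of a complex surface in the space $U$ of all circles in $S^3$ that is tangent, along a curve $E$, to the hyperplane section $\{x\in U: m\in\gamma_x\}$. Along $E$ every variation in $X$ fixes $m$ to first order. So $\widetilde E$ is a $2$-dimensional component of $\widetilde D^{\circ}_m$ on which $\ker d\ev$ is $4$-dimensional. Neither the tangent cone nor constancy of the rank along the component gives $\widetilde{I_X}$-invariance there, so an extra argument is needed.

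\textbf{One possible repair.} Near such a point, let $a(x)\in\C$ be the point where $\gamma_x$ crosses a small disc through $m$ transverse to the knot, so that $D_m=\{a=0\}$ locally. Holomorphicity gives $da\circ I_X=J_x\circ da$ for a continuous family of complex structures $J_x$ on $\C$. Hence $a$ (or $\bar a$) is quasiregular on every holomorphic disc in $X$. Positivity and stability of local degrees then rule out smooth pieces of $\{a=0\}$ whose tangent spaces are not complex, since a non-complex tangent plane is met by transverse complex lines with both intersection signs.

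Since (2) relies on (1), it inherits the same issue for critical $m$.
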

\begin{proof}
(\ref{holomorphic}). Since $\widetilde{D}^{\circ}_m$ is an open dense subset of $\ev^{-1}(m)$, it is tangent to $\ker d\ev$. The latter is contained in $\mathcal{H}$. 

Let $x_{\bullet}=(x,t)$ be point in $\widetilde{D}^{\circ}_m$ such that its image $x=\pi(x_{\bullet})$ is in $D^{\circ}_m$. For a vector $v \in T_xX$ denote by $\widetilde{v}$ its horizontal lift via the connection $\mathcal{H}$. Then $\widetilde{v}_{x_{\bullet}}$ is tangent to $\widetilde{D}^{\circ}_m$ if and only if it belongs to $\ker d\ev$, that is $\delta(v)=0$. In this case, $\delta(I_X\cdot v)$ also vanishes, thus $\widetilde{I_X}\cdot \widetilde{v}_{x_{\bullet}}$ is again tangent to  $\widetilde{D}^{\circ}_m$.
\\

(\ref{analytic}). The smooth locus of $D_m$ is an holomorphic submanifold of $X$ by (\ref{holomorphic}). Therefore, $D_m$ is complex analytic by Reiffen's Theorem (Theorem \ref{reiffen thrm}).
\\

(\ref{dimension}). If $m$ is a non-critical value of $\ev$, then $\dim \widetilde{D}_m=\dim X_{\bullet}-\dim M$ (recall that $\ev$ is locally surjective by Proposition \ref{local surjectivity}). At the same time,
\[
\dim_{\R} D_m=\dim_{\R} \widetilde{D}_m=\dim X_{\bullet}-\dim M=\dim_{\R}X+1-3=\dim_{\R}X-2.
\]

(\ref{homology class}). First of all $[\widetilde{D}_m]=[\widetilde{D}_{m'}]$ since these are two smooth fibers of a map between connected compact orientable manifolds, so they are both Poincar\'e dual to the pullback of the cohomology class of a point. At the same time, $[D_m]=\pi_*[\widetilde{D}_m]$ and $[D_{m'}]=\pi_*[\widetilde{D}_{m'}]$.
\end{proof}
\begin{rmk}
The assumption that $m$ is non-critical is crucial in the item (\ref{dimension}). Indeed, suppose that every knot $\gamma_x$ in the family $X$ passes through a point $m_0 \in M$. Then $D_{m_0}=X$.
\end{rmk}

\begin{rmk}
 Lemma \ref{tambovian properties} can be generalised directly to $k$-pinned loci. We left as an exercise to the reader to check that for a generic choice of distinct $k$-points $\mathbf{m}=\{m_1, \ldots, m_k\}$ in $M$ the $k$-pinned locus $D_{\mathbf{m}}$ is a complex subvariety of $X$ of complex codimension $k$ whose homology class does not depend on the choice of $\mathbf{m}$.
\end{rmk}

The following  follows immediately from Lemma \ref{tambovian properties} and Corollary \ref{cr holomoprhic criterion}.

\begin{cor}\label{velocity as rational map}
Let $m \in M$ be a point.  Then the  velocity map $\vel \colon X_{\bullet} \to \mathbf{S}M$ restricts to an holomorphic map
\[
\widetilde{D}^{\circ}_m \to S_m,
\]
where the $2$-sphere $S_m=\mathbf{S}(T^{*}_mM)\simeq \CP^1$ is endowed with the standard complex structure.
\end{cor}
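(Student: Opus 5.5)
The plan is to restrict the CR-holomorphicity statement of Corollary \ref{cr holomoprhic criterion} to the complex manifold $\widetilde{D}^{\circ}_m$ supplied by Lemma \ref{tambovian properties}(\ref{holomorphic}). First I check that $\vel$ maps $\widetilde{D}^{\circ}_m$ into $S_m$. This is immediate, because $p\circ\vel=\ev$ and $\ev\equiv m$ on $\widetilde{D}_m=\ev^{-1}(m)$. Hence $\vel|_{\widetilde{D}^{\circ}_m}$ is a smooth map into the fibre $S_m=p^{-1}(m)$, and its differential takes values in $\ker dp\cap\Lambda=TS_m$.

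Next, the complex structures. By Lemma \ref{tambovian properties}(\ref{holomorphic}), $T\widetilde{D}^{\circ}_m\subset\mathcal{H}$ is $\widetilde{I_X}$-invariant, and $\widetilde{I_X}$ restricts to an integrable complex structure on $\widetilde{D}^{\circ}_m$. Since the family is holomorphic, Corollary \ref{cr holomoprhic criterion} says that $\vel\colon(X_{\bullet},\mathcal{H},\widetilde{I_X})\to(\mathbf{S}M,\Lambda,J_g)$ is CR holomorphic. Concretely, for $w\in\mathcal{H}$ we have $d\vel(\widetilde{I_X}w)=J_g\,d\vel(w)$; this follows from $d\vel(\mathcal{H}^{1,0})\subseteq\Lambda^{1,0}$ together with the fact that $d\vel(\mathcal{H})\subseteq\Lambda$. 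Applying this to $w\in T\widetilde{D}^{\circ}_m$ gives
\[
d\vel(\widetilde{I_X}w)=J_g\,d\vel(w),\qquad d\vel(w)\in \ker dp .
\]

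It remains to identify $J_g$ on $\ker dp$ with the standard complex structure on $S_m$. By LeBrun's construction, $J_g=I_0\oplus s_*I_{g,[\alpha]}$ with respect to the splitting of (\ref{cr exact triple}), and the vertical subspace $\ker dp|_{m,[\alpha]}=T_{[\alpha]}S_m$ is $J_g$-invariant with $J_g|_{\ker dp}=I_0$. Therefore $d\vel|_{\widetilde{D}^{\circ}_m}$ intertwines the complex structure of $\widetilde{D}^{\circ}_m$ with $I_0$, which means the map is holomorphic into $S_m\simeq\CP^1$. One point needs care here: the statement "$J_g$ restricted to the vertical subspace is $I_0$" must hold independently of the chosen Levi-Civita splitting. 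It does, because $J_g$ preserves the subbundle $\ker dp$ and acts on it by $I_0$ for any choice of $s$.

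I expect no step to be a genuine obstacle. The only subtlety is bookkeeping of orientation conventions, so that one gets $I_0$ rather than $-I_0$. This matches the conventions the paper fixed when defining $J_g$.
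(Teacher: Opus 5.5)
Your proposal is correct and follows the same route as the paper, which states this corollary as an immediate consequence of Lemma \ref{tambovian properties}(\ref{holomorphic}) and Corollary \ref{cr holomoprhic criterion} without writing out a proof. You supply the two details the paper leaves implicit: $\vel(\widetilde{D}^{\circ}_m)\subseteq S_m$ because $p\circ\vel=\ev$, and $J_g$ acts on the vertical subspace $\ker dp=TS_m$ by $I_0$, regardless of the chosen splitting $s$.
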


Let $(m, \xi)$ be a point in $\mathbf{S}M$, that is, $m \in M$ and $\xi$ is a positive ray in $T^*_mM$. Let $\widetilde{D}_{m, \xi}:=\vel^{-1}(m, \xi)$ and $D_{m, \xi}:=\pi(\widetilde{D}_{m, \xi})$. Explicitly, $D_{m,\xi}$ can be described as the set of knots in our family passing through a given point $m$ with the prescribed tangent vector at this point.

\begin{prop}\label{fibres properties}
Suppose that $(m , \xi)$ is a non-critical value of $\vel$ and $m$ is a non-critical value of $m'$. Then
\begin{enumerate}
\item \label{holomorphic limit} $\widetilde{D}_{m,\xi}\cap\widetilde{D}^{\circ}_m$ is a closed  codimension $1$ complex analytic subvariety of $\widetilde{D}^{\circ}_m$;
\item \label{analytic limit} $D_{m, \xi}$ is a closed   codimension $1$ complex analytic subvariety of $D_m$;
\end{enumerate}
\end{prop}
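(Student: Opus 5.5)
The plan is to work on the smooth locus $\widetilde{D}^{\circ}_m$, which by Lemma~\ref{tambovian properties}(\ref{holomorphic}) is a complex manifold, and to identify $\widetilde{D}_{m,\xi}\cap\widetilde{D}^{\circ}_m$ as a fibre of a holomorphic map. Every point of $\widetilde{D}_m=\ev^{-1}(m)$ is sent by $\vel$ into the sphere $S_m=p^{-1}(m)$, since $p\circ\vel=\ev$. Hence
\[
\widetilde{D}_{m,\xi}\cap\widetilde{D}^{\circ}_m=\left(\vel|_{\widetilde{D}^{\circ}_m}\right)^{-1}(\xi),
\]
where $\vel|_{\widetilde{D}^{\circ}_m}\colon \widetilde{D}^{\circ}_m\to S_m\simeq\CP^1$ is holomorphic by Corollary~\ref{velocity as rational map}.

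\textbf{Step 1: item (\ref{holomorphic limit}).} The fibre of a holomorphic map to a curve over a point is a closed complex analytic subvariety of the source. It is either all of a connected component, or of pure codimension one in that component. To exclude the first possibility, use that $(m,\xi)$ is a non-critical value of $\vel$ and $m$ is a non-critical value of $\ev$; these are the hypotheses, reading the misprinted ``$m'$'' as $\ev$.

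By Proposition~\ref{local surjectivity}, $\dim_{\R}\widetilde{D}_m=\dim_{\R}X-2$. Non-criticality of $(m,\xi)$ gives
\[
\dim_{\R}\widetilde{D}_{m,\xi}=\dim X_{\bullet}-5=\dim_{\R}X-4,
\]
which is exactly two real dimensions less, that is, complex codimension one. The same count shows $\vel|_{\widetilde{D}^{\circ}_m}$ is a submersion at points of the fibre, so the fibre is smooth of codimension one there. Closedness in $\widetilde{D}^{\circ}_m$ is clear because $\widetilde{D}_{m,\xi}$ is a fibre of a continuous map.

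\textbf{Step 2: item (\ref{analytic limit}).} Transport the statement to $X$. The map $\pi\colon\widetilde{D}_m\to D_m$ is a bijection, and over the dense open set $\widetilde{D}^{\circ}_m\to D^{\circ}_m$ it is a real analytic isomorphism. By Lemma~\ref{tambovian properties}(\ref{holomorphic}) this map is also holomorphic, hence a biholomorphism there. So $D_{m,\xi}\cap D^{\circ}_m$ is a codimension one complex analytic subset of $D^{\circ}_m$.

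For all of $D_{m,\xi}$, note that it is the image of the real analytic set $\vel^{-1}(m,\xi)$ under the proper analytic map $\pi$. It is therefore a closed real analytic subvariety of $X$ (a priori subanalytic). Its smooth locus is a complex submanifold: on the part over $D^{\circ}_m$ this follows from Step 1, and on the remaining part one argues the same way using the CR-holomorphicity of $\vel$ (Corollary~\ref{cr holomoprhic criterion}). Reiffen's Theorem~\ref{reiffen thrm} then gives that $D_{m,\xi}$ is complex analytic. Its dimension is $\dim_{\C}X-2=\dim_{\C}D_m-1$ by the count in Step 1, and it is contained in $D_m$.

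\textbf{Main obstacle.} The delicate point is the behaviour over the singular locus $D_m\setminus D^{\circ}_m$. Two things must be checked there: that $D_{m,\xi}$ is genuinely analytic rather than merely subanalytic, so that Reiffen's theorem applies, and that no component of $D_{m,\xi}$ of the wrong dimension hides inside $\operatorname{Sing}D_m$. I would first try to show that the non-criticality of $(m,\xi)$ forces $\widetilde{D}_{m,\xi}$ to be a smooth real analytic submanifold of the expected dimension everywhere. Then $D_{m,\xi}=\pi(\widetilde{D}_{m,\xi})$ is the injective image of a smooth manifold whose tangent spaces lie in $\mathcal{H}$ and are $\widetilde{I_X}$-stable, by the argument of Lemma~\ref{tambovian properties}(\ref{holomorphic}) applied with $\ker d\vel\subseteq\ker d\ev$. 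This would make the dimension statement uniform and justify applying Reiffen's theorem.
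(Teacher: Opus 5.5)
Your proposal is correct and follows the paper's route. Item (1) is obtained as the fibre of the holomorphic map $\widetilde{D}^{\circ}_m\to S_m$ from Corollary~\ref{velocity as rational map}, and your dimension count via non-criticality of $(m,\xi)$ supplies the ``exactly codimension one'' that the paper leaves implicit. Item (2) is obtained by transporting along $\pi$ and arguing as in Lemma~\ref{tambovian properties}.

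The obstacle you flag is in fact vacuous under the hypotheses. Since $m$ is a regular value of $\ev$, $\widetilde{D}_m$ is smooth everywhere, and $\pi|_{\widetilde{D}_m}$ is a proper injective immersion because its tangent spaces lie in $\mathcal{H}$. Hence $D_m=D^{\circ}_m$, and your Step~1 together with the biholomorphism $\widetilde{D}_m\to D_m$ already gives (2) without Reiffen's theorem.
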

\begin{proof}
(\ref{holomorphic limit}). By Corollary \ref{velocity as rational map} it is a fibre of an holomorphic map to $S_m \simeq \CP^1$;
\\

(\ref{analytic limit}). Arguments are similar to those in Lemma \ref{tambovian properties}.
\end{proof}

\subsection{The dimension bound}\label{at most two subsec}

\begin{thrm}\label{at most two}
Let $(M, g)$ be a Riemannian $3$-manifold and $\mathcal{X}=(X, X_{\bullet}, \pi, \ev)$ a  holomorphic family of knots in it. If $X$ is compact, then $\dim X \le 2$.
\end{thrm}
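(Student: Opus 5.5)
The plan is to show that the pinned divisors $D_m$ have complex dimension at most $1$; then Lemma~\ref{tambovian properties}(\ref{dimension}) gives $\dim_{\C}X-1=\dim_{\C}D_m\le 1$ for a non-critical value $m$ of $\ev$, which is the claim. By Corollary~\ref{velocity as rational map}, $\vel$ restricts to a holomorphic map $\widetilde{D}^{\circ}_m\to S_m\simeq\CP^1$, whose fibres are the loci $\widetilde{D}_{m,\xi}$ of Proposition~\ref{fibres properties}. It therefore suffices to prove the following: for generic $(m,\xi)\in\mathbf{S}M$, the locus $D_{m,\xi}$ of knots through $m$ with prescribed tangent direction $\xi$ is finite. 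Since $X$ is compact and $D_{m,\xi}$ is a closed complex analytic subvariety (Proposition~\ref{fibres properties}), this amounts to showing that $D_{m,\xi}$ contains no compact holomorphic curve.

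Suppose instead that $C\subset D_{m,\xi}$ is an irreducible compact curve; after normalising, we may treat it as a smooth curve mapping to $X$. I would compute the intersection number $D_m\cdot C$ in two ways.

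First, by Lemma~\ref{tambovian properties}(\ref{homology class}), $[D_m]=[D_{m'}]$ for a nearby non-critical value $m'$. Hence $D_m\cdot C=D_{m'}\cdot C$, and this counts the points of $C_\bullet:=\pi^{-1}(C)$ lying over $m'$ under $\ev$. Every intersection point is positive, since both loci are complex (Lemma~\ref{tambovian properties}(\ref{holomorphic})). By the argument of Lemma~\ref{surjectivity of evaluation} applied to the one-dimensional family over $C$, the map $\ev|_{C_\bullet}\colon C_\bullet\to M$ has nonzero degree; indeed $\int_C\omega_g>0$ by Theorem~\ref{kaehlerness of families}. So $D_m\cdot C>0$.

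Second, since $C\subset D_m$, we have $D_m\cdot C=\deg\bigl(N_{D_m/X}|_C\bigr)$ when $D_m$ is smooth along $C$. For $x\in D_m^{\circ}$ with marked point $t$, $\gamma_x(t)=m$, the map $v\mapsto\delta_x(v)(t)$ induces an isomorphism $T_xX/T_xD_m\xrightarrow{\sim}N_{\gamma_x/M}(t)$. This map is complex linear by the holomorphicity condition $\delta(I_Xv)=I_g\delta(v)$. Along $C$ the point $m$ and the direction $\xi$ are fixed, so the target is the fixed complex line $\xi^{\perp}\subset T_mM$. Thus $N_{D_m/X}|_C$ is trivialised. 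I would check that this trivialisation is holomorphic, arguing via the CR-holomorphicity of $\vel$ (Corollary~\ref{cr holomoprhic criterion}) applied on the section $C\to C_\bullet$ picking the marked point at $m$. This gives $D_m\cdot C=0$, a contradiction.

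I expect the main obstacle to be making the second computation rigorous when $D_m$ is singular along $C$, or when $C$ passes through the critical locus. My first attempt would be to exploit genericity: choose $(m,\xi)$ to be a non-critical value of $\vel$ with $m$ non-critical for $\ev$. Then $\widetilde{D}_m$ is smooth, and I would work on $\widetilde{D}_m\subset X_\bullet$ instead, pulling the computation back there. Since $\widetilde{D}_m\to D_m$ is bijective, the intersection numbers match after pushing forward by $\pi_*$. A dimension count over the $5$-dimensional space $\mathbf{S}M$ would also rule out the degenerate case in which every fibre $D_{m,\xi}$ is positive-dimensional only because $C$ lies in the singular locus.
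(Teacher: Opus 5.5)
Your argument is correct, and it takes a genuinely different route from the paper's. The paper fixes a generic knot $\gamma\in D_m$ and lets $z\to m$ along $\gamma$. Wirtinger's inequality and Federer--Fleming compactness then show that the integration currents of $D_{m,z}=D_m\cap D_z$ converge to a current supported on $D_{m,\xi}\cup D_{m,-\xi}$. Hence $\int_{D_{m,z}}\vel^*\omega_{FS}=0$, so $D_{m,z}$ lies in fibres of $\vel|_{D_m}$. It follows that $D_{m,z}$ equals the component of $D_{m,\xi}$ through $\gamma$ for every $z\in\gamma$, which forces that component to be $\{\gamma\}$.

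You replace this degeneration by intersection theory on the fixed hypersurface $D_m$. The map $v\mapsto\delta_x(v)(t_x)$ identifies $N_{D_m/X}$, as a complex line bundle, with the pullback under $\vel|_{D_m}$ of the line bundle on $S_m$ whose fibre at $\xi$ is $\xi^{\perp}\subset T_mM$. So $c_1(\O(D_m))|_{D_m}$ is pulled back from $S_m$ and vanishes on curves inside fibres of $\vel$. On the other hand, $[D_m]$ is positive on every compact curve, as you show; equivalently, it is Poincar\'e dual to a positive multiple of $[\omega_g]$.

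What each approach buys: yours avoids geometric measure theory, Reiffen's theorem and the genericity choices in $z$ and $\xi$. As a by-product it gives $[D_m]^2=2\deg(\vel|_{D_m})$ when $\dim_{\C}X=2$, which recovers Lemma~\ref{square bound} for flow-like families without the real blow-up. The paper's argument instead makes visible the degeneration of knots through $m$ and $z$ to knots tangent at $m$, the same mechanism it reuses through the bievaluation map in Lemma~\ref{square bound}.

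Two simplifications.

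\textbf{The smoothness obstacle disappears.} Take $m$ to be a regular value of $\ev$. Then $\pi|_{\ev^{-1}(m)}$ is injective, because knots are embedded. It is also immersive, because $d\ev(\partial_t)=\dot\gamma_x\neq 0$, so $\ker d\ev\cap\ker d\pi=0$. Hence $D_m$ is a smooth compact complex hypersurface, and $T_xD_m$ is exactly the kernel of $v\mapsto\delta_x(v)(t_x)$. The contradiction is then obtained for every compact curve in every fibre $D_{m,\xi}$, with no condition on $\xi$. This is in fact what you need: ``$D_{m,\xi}$ finite for generic $\xi$'' would not suffice if $\vel$ were constant on some component of $D_m$, since generic fibres would then be empty.

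\textbf{Holomorphicity of the trivialisation is unnecessary.} You need not check that the trivialisation of $N_{D_m/X}|_C$ is holomorphic. The degree is topological, so a smooth fibrewise $\C$-linear trivialisation already gives degree $0$.
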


The idea of the proof is the following. We show that if $X$ is compact then the loci $D_{m, \xi}$ are either empty or zero-dimensional. At the same time, for a generic choice of $(m, \xi)$ one has 
\[
\dim_{\C} D_{m, \xi}= \dim_{\C} D_m-1 - 1= \dim_{\C} X-2
\]
by \textit{(ii)} of Proposition \ref{fibres properties} and \textit{(iii)} of Lemma \ref{tambovian properties}. Thus, one concludes that $\dim_{\C} X \le 2$.

Recall the following simple fact from K\"ahler geometry.

\begin{lemma}\label{fs non-zero}
Let $X$ be a compact K\"ahler manifold and $f \colon X \to \CP^1$ an holomorphic map. Let $Y \subseteq X$ be a closed analytic subvariety and $\omega_{FS}$ be the Fubini--Study form on $\CP^1$. Then $f|_{Y}$ is locally constant  if and only if $[(f^*{\omega_{FS}})|_{Y}]=0$ in $H^2(Y, \Q)$
\end{lemma}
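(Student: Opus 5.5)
We need to prove Lemma \ref{fs non-zero}: for $X$ compact Kähler, $f\colon X\to\CP^1$ holomorphic, and $Y\subseteq X$ a closed analytic subvariety, $f|_Y$ is locally constant iff $[(f^*\omega_{FS})|_Y]=0$ in $H^2(Y,\Q)$.

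One direction is immediate. If $f|_Y$ is locally constant, then on each connected component of $Y$ the restriction factors through a point. Hence the pullback of $\omega_{FS}$ to $Y$ vanishes as a form on the smooth locus. Its class, computed as $(f|_Y)^*[\omega_{FS}]$, factors through $H^2(\mathrm{pt},\Q)=0$, so it is zero.

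For the converse we argue by contraposition and test the class against a suitable cycle.

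\begin{enumerate}
\item Suppose $f|_Y$ is not locally constant. Then some irreducible component $Y_1$ of $Y$ has non-constant image. Since $f(Y_1)$ is an irreducible analytic subset of $\CP^1$ (proper mapping theorem) and is not a point, $f(Y_1)=\CP^1$.

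\item Let $d=\dim_{\C}Y_1\ge 1$ and let $\omega$ be a Kähler form on $X$. Consider the $(d-1)$-cycle pairing
\[
\int_{Y_1}f^*\omega_{FS}\wedge\omega^{d-1}.
\]
This integral is well defined via integration over the regular locus; by Lelong's theorem, $[Y_1]$ is a closed positive current, so the integral depends only on cohomology classes.

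\item The integrand is pointwise $\ge 0$ on $Y_1^{\mathrm{reg}}$, because it is a wedge of semipositive $(1,1)$-forms restricted to a complex submanifold.

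\item It is strictly positive at any regular point where $d(f|_{Y_1})\neq 0$. Such points exist, since $f|_{Y_1}$ is nonconstant and $Y_1^{\mathrm{reg}}$ is connected and dense in $Y_1$. At such a point $f^*\omega_{FS}$ is positive on some complex line $\ell$, and we may complete $\ell$ to a basis adapted to $\omega$. Therefore the integral is positive.

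\item Finally we transfer this to $Y$. The pairing of $[(f^*\omega_{FS})|_Y]$ with the homology class obtained by capping the fundamental class $[Y_1]\in H_{2d}(Y,\Q)$ with $[\omega|_Y]^{d-1}$ equals the positive integral above. Hence the class is nonzero in $H^2(Y,\Q)$. It is a rational class up to scaling, since $[\omega_{FS}]$ is integral after normalisation.
\end{enumerate}

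The main technical point is the use of integration over singular $Y_1$ and its compatibility with cohomology: the fundamental class of an irreducible compact analytic subvariety, and Stokes' theorem for closed positive currents. Both are standard.
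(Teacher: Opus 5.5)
Your proposal is correct and follows essentially the same route as the paper, which also fixes a K\"ahler form $\omega$ and shows $\int_Y f^*\omega_{FS}\wedge\omega^{k-1}>0$ when $f|_Y$ is not locally constant, using pointwise semipositivity of the integrand and strict positivity somewhere. Your version is somewhat more careful: you restrict to an irreducible component $Y_1$ on which $f$ varies, whereas the paper integrates over all of $Y$ with $k=\dim_{\C}Y$ and so tacitly assumes that component is top-dimensional. You also justify the pairing with the fundamental class and record the trivial direction, which the paper omits.
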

\begin{proof}
Let $k=\dim_{\C} Y$. Choose a K\"ahler form $\omega$ on $X$.

Suppose that $f|_{Y}$ is not locally constant. Then $(f^{*}\omega_{FS})|_Y$ is a non-negative $(1,1)$-form on $Y$ which is positive in at least one point. Therefore,
\[
\int_Y f^*\omega_{FS} \wedge \omega^{k-1} >0,
\]
implying that $f^*\omega_{FS}$ is not exact on $Y$.
\end{proof}

\begin{proof}[Proof of Theorem \ref{at most two}]
Let $m$ be generic enough, so that $D_m$ is smooth. We may assume that $\dim_{\C}X \ge 2$. Therefore $\dim_{\C} D_m \ge 1$ and the evaluation map $\ev \colon \pi^{-1}(D_m) \to M$ is surjective (Lemma \ref{surjectivity of evaluation}). The velocity map restricts to a surjective  map $\vel \colon D_m =\widetilde{D}_m \to S_m=\CP^1$ which is holomorphic by Corollary \ref{velocity as rational map}. 

A generic knot $\gamma \in D_m$ is not contained inside the critical locus of $\ev|_{\pi^{-1}(D_m)}$, i.e. $D_{m,z}=D_m \cap D_z$ is smooth for generic $z \in \gamma$. Notice that $[D_{m,z_1}]$ and $[D_{m, z_2}]$ are homologous inside $D_m$ for different  $z_1, \ z_2 \in \gamma$. Moreover, we can assume that $D_{m, \xi}$ is smooth for $\xi=\dot{\gamma}(m)$ and the same is true for $D_{m, -\xi}$. 

We use notation $D_{m, \pm \xi}:=D_{m, \xi} \cup D_{m, -\xi}$.

The idea of the proof is the following. If we take $z \in \gamma$ and tend it to $m$, the subvarieties $D_{m, z}$ tend to $D_{m, \xi}$. This implies that $\vel^*\omega_{FS}$ integrates by zero over $D_{m,z}$, forcing it to be contained in a union of fibres of the velocity map.

More precisely, let $\Delta_z$ denote the integration current of $D_{m,z}$ in $D_m$. This is a  closed integral current whose homology class does not depend on $z$. Moreover, since each $D_{m,z}$ is a compact holomorphic submanifold inside compact K\"ahler manifold $D_m$, it is volume minimizing by Wirtinger inequality (see e.g. \cite{Fed}). Therefore, the mass $\mathbf{M}(\Delta_z)$ is constant. 

Further, Federer - Fleming Compactness Theorem (\cite{FF}) guarantees  that there exists a limit $\Delta:=\lim_{z \to m} \Delta_{z}$ that is an integral current in the same homology class.

We claim that $\Delta$ is supported on $D_{m, \xi} \cup D_{m, \pm\xi}$.

Proof of the claim is the following. Let $z_n$ be a sequence of points on $\gamma$ that tend to $m$ and $x_n$ a sequence of points in $D_m$ with $x_j \in D_{m, z_j}$. Each $x_j$ corresponds to a knot $\gamma_{x_j}$ that passes through $m$ and intersects $\gamma$ in at least one more point $z_j$. Suppose that $x$ is a limit point of $(x_n)$ (such necessary exists by compactness). Then the knot $\gamma_x$ passes through $m$ and is tangent to $\gamma$ there. This implies that either $\gamma_x \in D_{m, \xi}$ or $\gamma_x \in D_{m, -\xi}$. 

Choose a point $y \in X \setminus D_{m,\pm\xi} $. By the discussion above and compactness of $D_{m, z_j}$, we can find a small ball $B(y)$ centered at $y$ such that $B(y) \cap D_{m, z}$ is empty for $z \in \gamma$ sufficiently close to $m$. Thus $\Supp(\Delta) \cap B(y)$ is empty. 

Once we concluded that $\Supp(\Delta) \subseteq D_{m, \pm\xi}$, we can apply Lemma \ref{fs non-zero}. Namely, let $\omega:=\vel^*\omega_{FS}$ be the pullback of Fubini--Study form on $S_m \simeq \CP^1$ to $D_m$ via the velocity map. For $z \in \gamma$ we have
\[
\int_{D_{m,z}} \omega=\langle \Delta_z, \omega \rangle=\langle \Delta, \omega \rangle =0.
\]
Here the second equality follows from the fact that $\Delta$ and $\Delta_{z}$ are homologeous and the third one from that $\Delta$ is supported on a fibre of $\vel$.

By Lemma \ref{fs non-zero} this implies that connected components of $D_{m,z}$ are contained in a fibre of $\vel \colon D_m \to S_m$ for every $z \in \gamma$. The intersection of  $D_{m,z}$ and $D_{m, \xi}$ is non-empty, as it contains $\gamma$. Thus, $D_{m,z} \subseteq D_{m,\pm\xi}$. Observe that by Proposition \ref{fibres properties} these are both closed analytic subvarieties of the same dimension, hence $D_{m,z}\subseteq D_{m, \xi}$ is a union of  irreducible components. 

Let $E \subseteq D_{m, \pm\xi}$ be the irreducible component containing ${\gamma}$. Since we assumed that $D_{m, \xi}$ is smooth, such component is unique and coincides with the connected component. As we shown, for every $z \in \gamma$ one has $D_{m, z}=E$.  This means, that every knot from $E$ passes through every point of $\gamma$ as well, i.e. $E=\{\gamma\}$.

Arguing similarly for knots in different connected components of $D_{m, \xi}$, we conclude that each of them is a point, i.e. $\dim D_{m, \xi}=0$.

At the same time, $\dim D_{m, \xi}=\dim X-2$, thus $X$ is a surface.
\end{proof}

\subsection{Conformal class is determined by a surface of knots}

\begin{thrm}\label{two-complex-structures-proposition}
   Let $\mathcal{X} = (X_\bullet,X,\pi,\ev)$ be some four-dimensional family of knots in $M$. Suppose the choice of a complex structure $I$ on $X$ makes it holomorphic for a conformal class $[g]$ on $M$, and the choice of a complex structure $I'$ on $X$ makes it holomorphic for a conformal class $[g']$. Then $I' = \pm I$, and $[g] = [g']$.
\end{thrm}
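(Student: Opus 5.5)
The plan is to reduce both claims to pointwise linear algebra along a single knot. We use the pinned loci of \S\ref{pinned loci subsec} to produce many planes in $T_xX$ that are complex for both $I$ and $I'$.

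Fix a generic $x\in X$ and a point $t$ on the knot $\gamma_x$ such that $m=\gamma_x(t)$ is a non-critical value of $\ev$. Consider the evaluation map
\[
E_t\colon T_xX\to N_t:=T_mM/\R\dot\gamma_x(t),\qquad v\mapsto \delta_x(v)(t).
\]
Definition \ref{df holomorphic family} says precisely that $E_t$ is complex linear both for $(I,J_{[g]})$ and for $(I',J_{[g']})$. By the proof of Proposition \ref{local surjectivity}, $E_t$ is surjective at such points. Its kernel $K_t$ is a real $2$-plane, namely the tangent plane to the pinned divisor $D_m$ at $x$ (Lemma \ref{tambovian properties}). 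Hence $K_t$ is simultaneously an $I$-complex line and an $I'$-complex line in $T_xX\simeq\R^4$.

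Next, show that $t\mapsto K_t$ is non-constant, so that real analyticity gives infinitely many distinct such planes. If $K_t$ were constant on an arc of $\gamma_x$, then the divisors $D_{\gamma_x(t)}$ would be mutually tangent at $x$. This should contradict the discreteness of $D_{m,\xi}$ obtained in the proof of Theorem \ref{at most two}: the $2$-pinned loci $D_{m}\cap D_{m'}$ would be non-reduced or positive-dimensional at $x$ for nearby $m,m'\in\gamma_x$. An alternative route uses Corollary \ref{velocity as rational map}: the velocity map on $D_m$ is a non-constant holomorphic map to $S_m$, and differentiating $K_t$ in $t$ can be related to its derivative. I expect this non-degeneracy step to be the main obstacle, and I would first try the intersection-multiplicity argument with $D_{m,\xi}$.

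The linear-algebra lemma is as follows. If $I,I'$ are complex structures on $\R^4$ sharing infinitely many common complex lines, then $I'=\pm I$. To see this, take three common lines $L_1,L_2,L_3$ and write $L_3$ as the graph of an $I$-linear $\phi\colon L_1\to L_2$. Invariance of $L_1,L_2$ under $I'$ and of the graph forces $\phi\circ I'|_{L_1}=I'|_{L_2}\circ\phi$. Repeating this with a fourth and a fifth line, the restrictions $I'|_{L_i}$ are forced to commute with $I$, hence to equal $\pm I$ on each line, with a common sign. The model computation is $I'=\operatorname{diag}(I,-I)$ on $\C\oplus\C$, which has only the two coordinate lines as common complex lines; in general the common lines form a real-algebraic set, and this set is finite unless $I'=\pm I$. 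This gives $I'=\pm I$ at generic $x$, and hence everywhere by continuity.

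Finally, with $I'=\pm I$ and $E_t$ surjective, we get $J_{[g']}=\pm J_{[g]}$ on $N_t$. Now $J_{[g]}$ on $N_t=T_mM/\R v$ is rotation by $\pi/2$ for the conformal structure induced on $v^{\perp_g}$. Through a generic $m$, the knots of $D_m$ realise an open set of directions $v$, because $\vel\colon D_m\to S_m$ is surjective (proof of Theorem \ref{at most two}). Therefore the conformal classes induced by $[g]$ and $[g']$ on $T_mM/\R v$ agree for an open set of $v$. It then remains to show that a conformal class on $\R^3$ is determined by its induced conformal classes on the quotients $\R^3/\R v$ for $v$ in an open set, for instance by recovering the null cone of $g$ from these quotient conformal classes. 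This gives $[g]=[g']$ on a dense set, and hence everywhere.
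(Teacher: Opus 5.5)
The proposal has a genuine gap in your third paragraph, and it is fatal: the linear-algebra lemma is false. On $V=\R^2\otimes\R^2$ take $I=I_1\otimes 1$ and $I'=I_1'\otimes 1$, where $I_1,I_1'$ are complex structures on $\R^2$ with $I_1'\neq\pm I_1$. Then every plane $\R^2\otimes\ell$ with $\ell\in\R\mathbf{P}^1$ is complex for both, so there is a whole circle of common complex lines while $I'\neq\pm I$.

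Your derivation breaks exactly at ``forced to commute with $I$''. Once $L_1,L_2,L_3$ normalise the situation to $I=I_1\oplus I_1$, $I'=I_1'\oplus I_1'$ with $L_3$ the diagonal, any further common line is the graph of some $\psi$ commuting with both $I_1$ and $I_1'$. Every $\psi=a\cdot\id$ qualifies, so a fourth or fifth line imposes no relation between $I_1'$ and $I_1$. The correct statement is a trichotomy: the common complex lines are all of $\mathbf{P}(V,I)$ (exactly when $I'=\pm I$), or a real circle in it, or at most two points. Your second step, by contrast, is easy: if $K_t=K$ on an arc, then $\delta_x(v)$ vanishes on an arc for $v\in K$, contradicting injectivity of $\delta_x$ by analyticity. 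But this does not rescue the argument.

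The gap cannot be closed by choosing the lines more cleverly, because along one knot the kernels $K_t$ can lie on such a circle. For great circles in round $S^3$, the normal Jacobi fields are $\cos t\,P+\sin t\,Q$ with $P,Q$ parallel normal fields. So $T_xX\cong\R^2\otimes N$ with $I=1\otimes J_N$, and $K_t=\R(\sin t,-\cos t)\otimes N$. These kernels are non-constant, yet every $1\otimes J'$ preserves all of them.

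In fact the statement itself fails. Take $A\in GL^+(4,\R)$ not conformal, for instance $A=\operatorname{diag}(1,1,1,2)$. The diffeomorphism $\Phi(u)=Au/|Au|$ of $S^3$ maps great circles to great circles. Hence the family of great circles over $\CP^1\times\CP^1$ is holomorphic for the round class $[g]$ with $I$. By naturality of $\delta$ under the isometry $\Phi\colon(S^3,\Phi^*g)\to(S^3,g)$, it is also holomorphic for $[\Phi^*g]\neq[g]$ with $I'=\phi^*I$, where $\phi$ is the induced map on the base. At the circle in the $e_1e_2$-plane one finds $I'=1\otimes B^{-1}J_NB$ with $B=\operatorname{diag}(1,2)$ on $N=\langle e_3,e_4\rangle$, so $I'\neq\pm I$.

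The paper's own proof rests on the same claim, that infinitely many common complex lines force $I'=\pm I$, so it has the same gap. What survives is the conditional part: if $I'=\pm I$, then $[g]=[g']$. Your last paragraph proves this correctly. It uses essentially the paper's idea of comparing the two complex structures on $T_mM/\R v\cong T_{[v]}S_m$. Your conic-reconstruction argument, however, needs only an open set of directions $v$, whereas the paper's ellipsoid computation uses the whole sphere $S_m$.
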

\begin{proof}
    For a point $m \in M$, the locus $D_m$ is holomorphic in both $I$ and $I'$. Choose a generic  point $\gamma \in X$. Then for each $m \in \gamma$ the point $[\gamma]$  is contained in the holomorphic locus $D_m$. Moreover, these loci do not coincide and are transverse at $\gamma$ for infinitely many points $m \in \gamma$.  Therefore there exists infinitely many $2$-dimensional subspaces $T_{\gamma}D_m \subset T_{\gamma}X$ that represent complex lines in both $I$ and $I'$. The two complex structures operators on $\R^4$ can have infinitely many common complex lines if and only if they are either equal or opposite, hence the first claim.  This implies that $I= \pm I'$ on a dense subset, and hence globally.

 We may assume $I=I'$ by replacing $I'$ with $-I'$ if needed. Now  we are in the situation where the same complex family is holomorphic with respect to two conformal structures $[g]$ and $[g']$.

 Recall that a conformal class $[g]$ on a three-dimensional vector space $V$ canonically defines a complex structure tensor $J_{[g]}$ on its spherisation $\mathbf{S}(V)$. In the case where $V=T_mM$ is the tangent space of a conformal $3$-manifold, its spherisation $S_m$ is tangent to the LeBrun's CR distribution (see subsection \ref{lebrun subsec}) and inherits the very same complex structure $J_{[g]}$.

 The two complex structures $J_{[g]}$ and $J_{[g']}$ coincide if and only if $[g]=[g']$. Indeed, we can always write $g'=g(A \cdot, \cdot)$ where $A$ is a self-adjoint positive operator. In an $g$-orthonormal basis $(e_1, e_2, e_3)$ the operator $A$ is of the form 
 \[
 A= \begin{pmatrix} a_1 &0 &0\\0&a_2 &0\\ 0&0&a_3 \end{pmatrix}
 \]
 with $a_i>0$ and $J_{[g']}$ acts on the tangent spaces 
 \[
 T_{[\R_{>0}\cdot e_i]}\mathbf{S}(V) = (\R \cdot e_i)^{\perp_{[g']}}= \langle e_j, e_k \rangle
 \]
 via $e_j \mapsto \sqrt{\frac{a_j}{a_k}} \cdot e_k$ (we assume that the trivector $e_i \wedge e_k \wedge e_i$ is positive oriented). The condition $J_{[g]}=J_{[g']}$ implies that $\sqrt{\frac{a_j}{a_k}}=1$ for each pair $\{j,k\}\subset \{1,2,3\}$. It follows that $a_1=a_2=a_3=c>0$ , i.e. $g'=c\cdot g$ (to put it more geometrically: two ellipsoides in $\R^3$ are conformally equivalent if and only if they are homothetic).
 
In our situation, each sphere $S_m$ inherits the two complex structures $J_{[g]}$ and $J_{[g']}$. The restriction of the velocity map $\vel \colon (D_m, I)=(D_m, I') \to S_m$ is holomorphic with respect to both of them. As this is a non-constant map between complex manifolds of the same dimension, it is generically unramified cover. This means that on a dense open subset of $S_m$ we have \[
J_{[g]}=d\vel \circ I \circ d\vel^{-1}=d\vel \circ I' \circ d\vel^{-1}=J_{[g']}.
\]

Therefore, $J_{[g]}=J_{[g']}$ and $[g]|_{T_mM}=[g']|_{T_mM}$. Again this holds for a dense subset of points $m \in M$, thus $[g]=[g']$.

\end{proof}

\begin{rmk}
  This is again a peculiarity of the finite-dimensional situation: in the infinite-dimensional space of knots, pinned submanifolds are also defined independently of the conformal structure on $M$, and hence are holomorphic for many complex structures on $\Kn(M)$, which does not yield the structures to coincide. It would be interesting to figure out if there exist formally integrable almost complex structures on $\Kn(M)$ for which any pinned submanifold $D_m$ is a complex hypersurface which do not come from any conformal structure on $M$.
\end{rmk}

\section{Flow-like and real flow-like families}\label{flowlike-section}

In this section we study flow-like and real flow-like families (Definitions \ref{flowlike def} and \ref{real def}).

\subsection{Counting knots through pairs of points}

Let $\mathcal{X}=(X_{\bullet}, X, \pi, \ev)$ be a compact flow-like holomorphic family in a conformal $3$-manifold $(M, [g])$. 

Recall that an algebraic variety is called \emph{rationally connected} if every two points on it can be connected by a chain of rational curves.

\begin{prop}
Let $\mathcal{X}=(X_{\bullet}, X, \pi, \ev)$ be a compact holomorphic flow-like family in $(M, [g])$. Then $X$ is a complex projective surface. For a point $m \in M$, the pinned locus $D_m \subset X$ is a rational curve. Moreover, $X$ is rationally connected.
\end{prop}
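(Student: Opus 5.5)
Flow-likeness gives a bijection $\vel \colon X_\bullet \to \mathbf{S}M$ (Proposition \ref{flow-like prop}). I would first argue that it is a diffeomorphism, so that $\dim_{\R} X_\bullet = \dim_{\R}\mathbf{S}M = 5$ and hence $\dim_{\R} X = 4$. Concretely, I would show $d\vel$ is injective. On the vertical direction $\di/\di t$ we have $\lambda(d\vel(\di/\di t))\neq 0$, as in Proposition \ref{ehresmann prop}. On $\mathcal H$, a kernel vector would give $\delta_x(v)$ vanishing to first order at a point. Holomorphicity then makes both $\delta_x(v)$ and $\delta_x(I_Xv)$ vanish there, which contradicts the injectivity of a smooth bijection between manifolds of equal dimension only if the dimensions agree.

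A cleaner route to the dimension is this. Since $\vel$ is a continuous bijection of compact manifolds, invariance of domain forces $\dim X_\bullet = 5$, once we know $\dim X_\bullet\le 5$. That bound follows from Theorem \ref{at most two}. If $\dim_{\C}X<2$, a bijection from a manifold of dimension at most $3$ onto a $5$-manifold is impossible, by Sard and the fact that the image has measure zero. Hence $\dim_{\C} X=2$, and $X$ is projective by Theorem \ref{kaehlerness of families}.

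Next I would identify $D_m$. Because $\vel$ is a bijection, $\widetilde D_m=\vel^{-1}(S_m)$ maps bijectively onto $S_m\simeq\CP^1$, and this map is holomorphic on the smooth locus by Corollary \ref{velocity as rational map}. In addition, $\widetilde D_m\to D_m$ is a bijection. Every point of $M$ is a regular value: $\ev = p\circ\vel$, and if $\vel$ is a diffeomorphism then $\ev$ is a submersion. So $\widetilde D_m$ is a smooth compact real surface, in fact a complex curve by Lemma \ref{tambovian properties}(\ref{holomorphic}). A holomorphic bijection between smooth compact curves is a biholomorphism, so $\widetilde D_m\cong\CP^1$. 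Then $D_m=\pi(\widetilde D_m)$ is the image of $\CP^1$ under a holomorphic map that is bijective onto its image, hence a rational curve. It is possibly singular, but it is irreducible and rational.

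For rational connectedness, I would show that any two points $x,y\in X$ are joined by a chain of such curves. Pick $m\in\gamma_x$ and $m'\in\gamma_y$, and join $m$ to $m'$ by a path in $M$, assuming $M$ is connected. Subdivide the path so that consecutive points $m_i,m_{i+1}$ lie on a common knot $\gamma_{x_i}$. Flow-likeness supplies such a knot for nearby points: the knot through $m_i$ in the direction of $m_{i+1}$ passes nearby, and to hit $m_{i+1}$ exactly I would use the local surjectivity of Proposition \ref{local surjectivity} at the level of pairs. Then $x\in D_{m_0}\ni x_0\in D_{m_1}\ni x_1\cdots$, which is a chain of rational curves.

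The main obstacle is that last step. A cleaner alternative is to observe that $D_m\cap D_{m'}\neq\emptyset$ for all $m,m'$. This holds because $[D_m]$ is constant (Lemma \ref{tambovian properties}(\ref{homology class})) and $[D_m]^2>0$, since $D_m$ moves in a family covering $X$ and Kähler positivity applies. Then any two points are joined by a chain $D_m\cup D_{m'}$ of length two. I would try this intersection-number argument first.
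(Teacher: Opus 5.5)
Your outline follows the paper's proof. Bijectivity of $\vel$ gives a homeomorphism and hence $\dim_\R X=4$. The bijection $\widetilde D_m\to S_m$ shows $D_m$ is rational. Pinned curves through any two points meet. However, two steps are not justified as written.

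\textbf{$\vel$ is only shown to be a homeomorphism.} Your first paragraph at most shows that $\ker d\vel\cap\mathcal H$ is $\widetilde{I_X}$-invariant, which does not make it zero. The ``cleaner route'' gives only a homeomorphism, and a smooth homeomorphism can have degenerate differential (e.g.\ $t\mapsto t^3$). So ``every point of $M$ is a regular value of $\ev$'' and the smoothness of $\widetilde D_m$ are unsupported. The paper also simply asserts that $\vel$ maps $\widetilde D_m$ isomorphically onto $S_m$. You can bypass the issue topologically. The maps $\widetilde D_m\to S_m$ and $\widetilde D_m\to D_m$ are continuous bijections of compact Hausdorff spaces, so the compact analytic set $D_m$ (Lemma \ref{tambovian properties}) is homeomorphic to $S^2$. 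A complex curve that is a topological surface is irreducible and unibranch at every point. Its normalisation is therefore a homeomorphism onto it, hence is $\CP^1$. Incidentally, Theorem \ref{at most two} is not needed for the dimension count: invariance of domain already forbids a continuous injection of a manifold of dimension $>5$ into a $5$-manifold.

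\textbf{The positivity $[D_m]^2>0$ is not justified.} This is the more serious gap. A curve moving in a family that covers $X$ only has $[D_m]^2\ge 0$: the fibres of $\CP^1\times\CP^1\to\CP^1$ move, cover the surface, and have square $0$. Also, $\int_{D_m}\omega>0$ says nothing by itself about the self-intersection. The missing input is the link between $D_m$ and the K\"ahler form $\omega_g=R(\nu_g)$ of Theorem \ref{kaehlerness of families}.

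The paper supplies this link through the argument of Lemma \ref{surjectivity of evaluation}, applied to the knots parametrised by (the normalisation of) the compact curve $D_m$. Since $\int_{D_m}\omega_g>0$, the map $\ev$ on this $3$-dimensional family has nonzero degree, so the knots through $m$ sweep out all of $M$. Hence for every $m'$ some knot passes through both $m$ and $m'$, i.e.\ $D_m\cap D_{m'}\neq\emptyset$ for all $m,m'$, with no intersection theory and no genericity needed.

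If you prefer your route, use that for a regular value $m$ one has $\operatorname{PD}[m]=[\nu_g]/\mathrm{vol}_g(M)$. Then $\operatorname{PD}[D_m]=\pm\pi_*\ev^*\operatorname{PD}[m]=\pm[\omega_g]/\mathrm{vol}_g(M)$, so $[D_m]^2=\int_X\omega_g^2/\mathrm{vol}_g(M)^2>0$. Either way, taking $m\in\gamma_x$ and $m'\in\gamma_y$ joins $x$ to $y$ by $D_m\cup D_{m'}$, and you can drop the path-subdivision argument.
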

\begin{proof}

As we pointed out in Proposition \ref{flow-like prop}, the velocity map $\vel \colon X_{\bullet} \to \mathbf{S}M$ is bijective. Since the family is compact, it is a homeomorphism. Therefore it identifies $X$ with the base of an $S^1$-bundle whose total space in $\mathbf{S}M$, so $\dim_{\R}X=4$.

The restriction of the $S^1$-family $X_{\bullet} \to X$ on the pinned locus $D_m$ admits a canonical section $\widetilde{D}_m=\ev^{-1}(m)$. The velocity map maps it isomorphically to the sphere $\mathbf{S}_m$. Thus $D_m$ has genus $0$.

It easily follows from surjectivity of evaluation map  (Lemma \ref{surjectivity of evaluation}) that every two points on $X$ can be connected by a chain of pinned curves $D_{m_i}$ (in fact it is always sufficient to two such curves).
\end{proof}

\begin{cor}
Under the same assumptions, the divisors $D_m, D_{m'} \subset X$ are rationally equivalent.
\end{cor}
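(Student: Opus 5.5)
The plan is to deform $D_m$ into $D_{m'}$ through a family of pinned curves parametrised by a rational curve in $X$, so that $D_m$ and $D_{m'}$ become fibres of a map to $\CP^1$. Since $X$ is rationally connected, and for a smooth projective rationally connected variety $H^1(X,\O_X)=0$, we have $\operatorname{Pic}^0(X)=0$. Rational equivalence of divisors therefore coincides with linear equivalence, and linear equivalence coincides with homological equivalence up to torsion in $\operatorname{NS}(X)=H^2(X,\Z)\cap H^{1,1}$.

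First I would note that the map $m \mapsto D_m$ is defined for every $m\in M$. By the previous Proposition, each $D_m$ is an irreducible rational curve, isomorphic via $\vel$ to $S_m$. In the flow-like case $\vel$ is a homeomorphism, so no point is a critical value in the relevant sense, and $D_m$ is always a curve. Next, Lemma \ref{tambovian properties} (\ref{homology class}) gives $[D_m]=[D_{m'}]$ in $H_2(X,\Q)$. To sharpen this to integral classes, I would reuse the argument given there: $\widetilde{D}_m$ and $\widetilde{D}_{m'}$ are fibres of $\ev\colon X_\bullet\to M$, a map between connected compact oriented manifolds. Their fundamental classes are Poincaré dual to $\ev^*[\mathrm{pt}]$, so they agree in $H_2(X_\bullet,\Z)$, and pushing forward by $\pi$ (which maps $\widetilde{D}_m$ bijectively onto $D_m$) gives $[D_m]=[D_{m'}]$ in $H_2(X,\Z)$. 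The line bundles $\O(D_m)$ and $\O(D_{m'})$ then have equal first Chern class, so they differ by an element of $\operatorname{Pic}^0(X)$, which vanishes. Hence $D_m\sim D_{m'}$ linearly, and a fortiori rationally.

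The main obstacle is justifying $H^1(X,\O_X)=0$ and the other inputs that rationally connected varieties are known to satisfy, since $X$ is a smooth projective surface (Theorem \ref{kaehlerness of families}). By Hodge symmetry, $h^{1,0}=h^{0,1}$, and holomorphic $1$-forms vanish on every rational curve. Because any two points are joined by a chain of rational curves (two pinned curves suffice), the Albanese map contracts every rational curve, so it is constant. Therefore $h^{1,0}=0$ and $\operatorname{Pic}^0(X)=0$. Torsion in $H^2(X,\Z)$ does not spoil the argument, because the equality of classes was established integrally in the previous step.
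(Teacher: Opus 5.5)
Your argument is correct and is essentially the paper's: rational connectedness gives $H^1(X,\O_X)=0$, hence $\operatorname{Pic}^0(X)=0$, so $c_1$ is injective on $\operatorname{Pic}(X)$ and the equality $[D_m]=[D_{m'}]$ from Lemma~\ref{tambovian properties}~(\ref{homology class}) forces linear, equivalently rational, equivalence. Your integral upgrade of that homology equality and your Albanese argument for $h^{0,1}=0$ simply fill in details the paper leaves implicit; the opening sentence about deforming through a family of pinned curves parametrised by a rational curve in $X$ describes a plan you never use and can be deleted.
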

\begin{proof}
Since $X$ is rationally connected, $H^1(X, \O_X)=0$. Thus, rational equivalence of divisors is the same as cohomological equivalence (see e.g. \cite[Section 7.1.3.]{Vois}).
\end{proof}

This Corollary allows us to speak about \emph{pinned divisor class} $D$ that corresponds to an holomorphic line bundle $\O(D)$. The pinned curves are obtained as zero sets of holomorphic sections of this bundle. Notice that the family of pinned curves $D_m$ is parametrised by points $m \in M$, that is, has real dimension $3$. The space $\mathbf{P}(H^0(X, \O(D))$ is therefore has complex dimension at least $2$, and for generic section of $\O(D)$ its zero set is not a pinned curve, but rather its holomorphic deformation.

The main goal for now is to get a bound on the cohomological self-intersection $[D]^2 \in H^4(X, \Z)$. Namely, we prove the following lemma:

\begin{lemma}\label{square bound}
Let $\mathcal{X}=(X_{\bullet}, X, \pi, \ev)$ be a flow-like holomorphic family on $(M, [g])$. Let $D_m \subseteq X$ be a pinned curve. Then $[D_m]^2 = 2$.
\end{lemma}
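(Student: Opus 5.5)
We compute $[D_m]^2$ as the intersection number $D_m\cdot D_{m'}$ for two distinct generic points $m,m'\in M$. Since the pinned curves are complex curves on the complex surface $X$ (Lemma~\ref{tambovian properties}) and their classes agree (item~(\ref{homology class}) of that lemma, or the Corollary on rational equivalence), we have $[D_m]^2=D_m\cdot D_{m'}$. Moreover, as both are complex curves meeting in isolated points, each intersection point contributes a positive local multiplicity. So the lemma reduces to two claims: $D_m\cap D_{m'}$ consists of exactly two points, and the intersection is transverse at each. Here $D_m\cap D_{m'}=D_{m,m'}$ is the set of knots of the family passing through both $m$ and $m'$.

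\textbf{Step 1 (counting).} We count knots through $m$ and $m'$ by degenerating $m'\to m$ along a knot, echoing the limiting argument in the proof of Theorem~\ref{at most two}. Fix a generic knot $\gamma$ through $m$ with tangent direction $\xi$ at $m$, and let $m'=z\in\gamma$ tend to $m$. Any limit of knots through $m$ and $z$ passes through $m$ tangent to $\pm\xi$, so it lies in $D_{m,\xi}\cup D_{m,-\xi}$. By flow-likeness (Proposition~\ref{flow-like prop}) each of these loci is a single point: $\gamma$ itself, and the knot $\gamma^-$ through $m$ with velocity $-\xi$. Since the intersection number is constant in $z$, it equals the sum of the local intersection multiplicities of $D_m$ and $D_z$ at $\gamma$ and at $\gamma^-$, computed for $z$ close to $m$. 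In the exceptional case $\gamma^-=\bar\gamma$ these are still two distinct points of $X$, because opposite orientations are distinct knots (Definition~\ref{definition of a family}).

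\textbf{Step 2 (local multiplicity one).} We must show that near $\gamma$ (and similarly near $\gamma^-$), for $z$ close to $m$, the curves $D_m$ and $D_z$ meet in exactly one point, transversally. Since $\gamma\in D_m\cap D_z$ for every $z\in\gamma$, it suffices to prove $T_\gamma D_m\neq T_\gamma D_z$ for $z\neq m$ near $m$. Identify $T_\gamma X$ with $\delta(T_\gamma X)\subset\Gamma(N_{\gamma/M})$, a complex $2$-dimensional space of sections by Definition~\ref{df holomorphic family}. Then $T_\gamma D_m=\{v:\delta(v)(m)=0\}$, a complex line. Evaluation at a point $t$ gives complex-linear maps $\mathrm{ev}_t\colon\C^2\to N_t\simeq\C$, and the two tangent lines coincide exactly when $\mathrm{ev}_m$ and $\mathrm{ev}_z$ have the same kernel.

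Suppose, for contradiction, that the kernels agree for all $z$ in an interval. Then the section $s$ spanning $\ker \mathrm{ev}_m$ vanishes on that whole interval, which is impossible for an analytic family unless $s=0$. That rules out coincidence on an interval but still leaves a discrete set of bad $z$. To control the behaviour as $z\to m$, we instead use the derivative. By flow-likeness $D_{m,\xi}=\{\gamma\}$, and for a compact holomorphic $D_m\simeq\CP^1$ the velocity map $D_m\to S_m$ is a bijective holomorphic map, hence a biholomorphism. Its differential at $\gamma$ is injective, which means the nonzero section $s\in T_\gamma D_m$ has nonvanishing covariant derivative at $m$. Therefore $s(z)\neq 0$ for $z\neq m$ near $m$, so $s\notin T_\gamma D_z$ and the intersection is transverse. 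The same argument applies at $\gamma^-$. Combining Steps 1 and 2 gives $D_m\cdot D_z=1+1=2$.

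\textbf{Main obstacle.} The hardest part is making Step 1 rigorous, namely that no intersection points of $D_m\cap D_z$ escape elsewhere and that the multiplicities are exactly local near the two limit points. Compactness of $X$, together with the limit argument from Theorem~\ref{at most two}, confines all intersection points to small neighbourhoods of $\gamma$ and $\gamma^-$. Within those neighbourhoods, Step 2 shows the curves are transverse at the known intersection point. What remains is to exclude additional nearby intersection points that converge to $\gamma$ as $z\to m$. I would handle this with a first-order expansion: use the biholomorphism $D_m\simeq S_m$, parametrize knots near $\gamma$ in $D_m$ by their direction at $m$, and check that the condition of passing through $z$ has a nondegenerate linearization there.
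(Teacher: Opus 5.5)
\paragraph{Verdict: there is a genuine gap.} Your overall strategy matches the paper's heuristic: let the second point $z$ tend to $m$ along a knot $\gamma$, and use flow-likeness to see that knots through $m$ and $z$ can only accumulate on $\gamma$ and on the knot $\gamma^-$ with velocity $-\xi$ at $m$. But the step that actually produces the number $2$ is not established, for two concrete reasons.

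First, Step~2 does not transfer to $\gamma^-$. Your transversality argument starts from $\gamma\in D_m\cap D_z$. Unless $\gamma^-$ is $\gamma$ with reversed orientation (which is what realness of a flow-like family amounts to), $\gamma^-$ does not pass through $z=\gamma(\epsilon)$, so $\gamma^-\notin D_z$. Nothing in the proposal shows that $D_m\cap D_z$ has \emph{any} point near $\gamma^-$. As written you only get $D_m\cdot D_z\ge 1$; for a flow-like non-real family such as the one in Example~\ref{non-real ex}, the contribution near $\gamma^-$ is a priori undetermined.

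Second, as you note yourself, transversality at $\gamma$ for each fixed $z$ does not exclude further points of $D_m\cap D_z$ collapsing onto $\gamma$ as $z\to m$. The isolating neighbourhood may shrink with $z$. Moreover, the naive linearisation of ``passing through $z$'' degenerates at $z=m$: there it becomes ``passing through $m$'', which is vacuous on $D_m$.

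\paragraph{The missing ingredient} is a rescaling that makes the limit $z\to m$ non-degenerate. The paper lifts the bievaluation map $\ev^{(2)}\colon X_\bullet\times_X X_\bullet\to M\times M$ to a smooth map $\Phi$ into the real blow-up of $M\times M$ along the diagonal.
\begin{enumerate}
\item On the diagonal $\Delta_{X_\bullet}$, $\Phi$ equals $\sigma\circ\vel$. By flow-likeness this is $2$-to-$1$ onto the exceptional divisor $\mathbf{P}M$; the two preimages of $(m,[\xi])$ are the points of $\gamma$ and $\gamma^-$ over $m$.
\item $d\Phi$ is surjective at these points, so by the inverse function theorem $\Phi$ is a $2$-sheeted covering over a neighbourhood of the exceptional divisor.
\item This gives existence and uniqueness of exactly one intersection point near each of $\gamma$ and $\gamma^-$, uniformly in $z$. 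It also gives transversality, since off the diagonal $\ker d\ev^{(2)}$ at $(\gamma',t,s)$ is $T_{\gamma'}D_{\gamma'(t)}\cap T_{\gamma'}D_{\gamma'(s)}$.
\end{enumerate}

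\paragraph{How to repair your route.} Your ``first-order expansion'' can be turned into the same argument, but it must be rescaled and carried out over all of $D_m\cong S_m$, not only near $\gamma$.
\begin{enumerate}
\item Fix a metric $g$ in the class. Let $\gamma_\eta$ be the unit-speed knot with $\gamma_\eta(0)=m$ and direction $\eta\in S_m$, and write $z=\exp_m(\epsilon w)$ with $|w|=1$.
\item Solve $\epsilon^{-1}\exp_m^{-1}\gamma_\eta(\epsilon u)=w$ for $(u,\eta)$, with $|u|$ bounded; your compactness argument from Step~1 guarantees this bound.
\item This equation extends smoothly to $\epsilon=0$ as $u\eta=w$. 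Its solutions $(u,\eta)=(1,w)$ and $(-1,-w)$ are non-degenerate.
\item Hence for small $\epsilon$ there are exactly two knots through $m$ and $z$, one near $\gamma$ and one near $\gamma^-$. Transversality at each then follows from your derivative argument, made uniform.
\end{enumerate}
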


The square $[D_m]^2$ can be counted as the number of intersections of two generic pinned curves $D_{m_1}$ and $D_{m_2}$. Geometrically, it can be interpreted as the number of knots from the family $\mathcal{X}$ connecting  $m_1$ to $m_2$.  Therefore Lemma \ref{square bound} says that  flow-like families share something in common with classical geometries: every two points can be connected by a preferred path, and there is at most two such paths.

Once we think about the number $[D_m]^2$ as the number of knots connecting two generic points, it is easy to observe that it equals the degree of \emph{bievaluation map}
\[
\ev^{(2)} \colon X_{\bullet} \times_{X} X_{\bullet} \to M \times M.
\]
This map takes a triple $(\gamma_x, t, s)$, where $t$ and $s$ are two points on the fiber $\gamma_x \subset X_{\bullet}$ to $(\gamma_x(t), \gamma_x(s)) \in M \times M$.

Notice that  $\dim X_{\bullet} \times_{X} X_{\bullet}=\dim M \times M = 6$ and there exists a dense open subset of $M\times M$ on which every point has exactly $d=[D_m]^2$ preimages.

The idea is to connect the number $d$ with the geometry of velocity map $\vel$ seeing the latter as degeneration of $\ev^{(2)}$ when the two points $t, \ s$ tend to each other.

In order to do it let us first introduce the construction of \emph{real  blow-up}. This is an operation that is probably well-known, but we did not manage to find its discussion in the literature.

Let $L$ be a smooth compact manifold and $K \subset L$ a smooth closed submanifold. Let $U(K)$ be a small tubular neighbourhood and $\mathring{U}(K):=U(K) \setminus K$. Consider the projectivisation of the normal bundle $\mathbf{P}N_{K/L}$. It is endowed with the tautological line bundle $\O(1)$. Let $P_0\subset \Tot(\O(1))$ be the zero section and $V(P_0)$ its small tubular neighbourhood in $\Tot(\O(1))$. Let $\mathring{V}(P_0)=V(P_0) \setminus P_0$. By Tubular Neighbourhood Theorem there exists a diffeomorphism $\phi \colon \mathring{U}(K) \xrightarrow{\sim} \mathring{V}(P_0)$.

\begin{df}
The \emph{real oriented blow-up} of $L$ in $K$ is the glueing
\[
\Bl_K L:= (L \setminus K) \bigsqcup_{\phi} V(P_0).
\]
\end{df}

The resulting  topological space is a smooth manifold that does not depend on the choice of $\phi$.

It is endowed with a natural surjective map $\tau \colon \Bl_K L \to L$ that is isomorphism on $\operatorname{Int}(\Bl_K L) \xrightarrow{\sim} L \setminus K$ and on $P_0$ acts as the projection $\mathbf{P}N_{K/L} \to K$.

We come back to study the bievaluation map $\ev^{(2)}$. 

Remark that $X_{\bullet}$ is embedded as a diagonal $\Delta_{X_\bullet} \subset X_{\bullet} \times_X X_{\bullet}$ as a hypersurface. Similarly, $M$ is embedded as a diagonal $\Delta_M \subset M \times M$ and $\ev^{(2)}(\Delta_{X_{\bullet}})=\Delta_M$. The normal bundle $\mathcal{N}_{\Delta_M/M\times M}$ is isomorphic to $T^*M$. Using the conformal structure we can identify $\mathbf{P}(T^*M)$ with $\mathbf{P}(TM)=\mathbf{P}M$. The latter is obtained by taking quotienf of $\mathbf{S}M$ by the natural involution $\pm1$.

The key observation is that the map $\ev^{(2)}$ admits a natural lift to the real  blow-up of $M\times M$ along the diagonal.

\begin{prop}
There exists a smooth surjective map $\Phi$ such that the diagram
$$
\xymatrix{
& \Bl_{\Delta_M}(M \times M) \ar[d]^{\tau}\\
X_{\bullet}\times_X X_{\bullet} \ar[r]_{\ev^{(2)}} \ar[ru]^{\Phi}& M\times M
}
$$
commutes. On the diagonal $\Delta_{X_{\bullet}}$ it coincides with the composition map 
\[X_{\bullet} \simeq \Delta_{X_{\bullet}} \xrightarrow{\vel}  \mathbf{S}M \xrightarrow{\sigma} \mathbf{P}M,
\]
where $\sigma$ is the natural projection.
\end{prop}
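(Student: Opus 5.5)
The plan is to define $\Phi$ separately off and near the diagonal, and then to check smoothness, the formula on $\Delta_{X_\bullet}$, and surjectivity. Since $\tau$ is a diffeomorphism from $\Bl_{\Delta_M}(M\times M)\setminus P_0$ onto $(M\times M)\setminus\Delta_M$, the commutativity requirement forces $\Phi=\tau^{-1}\circ\ev^{(2)}$ off $\Delta_{X_\bullet}$. This is legitimate because each $\gamma_x$ is an embedding: $\gamma_x(t)=\gamma_x(s)$ if and only if $t=s$, so $(\ev^{(2)})^{-1}(\Delta_M)=\Delta_{X_\bullet}$. The whole content is therefore to show that this map extends smoothly across $\Delta_{X_\bullet}$ with the prescribed boundary values.

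\textbf{Step 1: the extension near the diagonal.} I will choose the tubular identification $\phi$ via the exponential map of a metric $g\in[g]$. A point of $M\times M$ near $\Delta_M$ is written $(m,\exp_m(u))$ with $u\in T_mM$ small, and a point of $V(P_0)$ is a triple $(m,[w],\lambda w)$, where $[w]\in\mathbf{P}(T_mM)\simeq\mathbf{P}(T^*_mM)$ (identified via $g$) and $\lambda w$ lies on that line. Near $\Delta_{X_\bullet}$, I use local coordinates $(x,t,h)$ on $X_\bullet\times_X X_\bullet$, with $s=t+h$. Write
\[
\exp^{-1}_{\gamma_x(t)}\gamma_x(t+h)=h\,w(x,t,h),\qquad w(x,t,h)=\int_0^1 \partial_h\bigl(\exp^{-1}_{\gamma_x(t)}\gamma_x(t+uh)\bigr)\big|_{h\mapsto uh}\,du .
\]
Here $w$ is smooth (Hadamard's lemma) and $w(x,t,0)=\dot\gamma_x(t)\neq0$, so $w$ is nonvanishing near $h=0$. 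I then set
\[
\Phi(x,t,h)=\bigl(\gamma_x(t),[w(x,t,h)],h\,w(x,t,h)\bigr).
\]
This is smooth, and for $h\neq 0$ it agrees with $\tau^{-1}\circ\ev^{(2)}$ under $\phi$. At $h=0$ it equals $(\gamma_x(t),[\dot\gamma_x(t)])=\sigma(\vel(x,t))$. The two definitions glue, and independence of the chart follows because $\Bl$ does not depend on $\phi$ and the formula is intrinsic: it is the limiting secant direction.

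\textbf{Step 2: surjectivity.} I would argue by degree. With the projective normal bundle, $P_0$ is a hypersurface rather than a boundary, so $\Bl_{\Delta_M}(M\times M)$ is a closed $6$-manifold. In the flow-like compact setting, $X_\bullet\times_X X_\bullet$ is also a closed $6$-manifold. It therefore suffices to show $\deg\Phi\neq0$. I would compute the degree at a generic point $(m,[v])\in P_0$. By Proposition~\ref{flow-like prop}, $\vel$ is a bijection, so the preimages are exactly the two points of $\Delta_{X_\bullet}$ with velocity $v$ and $-v$. At each of them, $d\Phi$ is an isomorphism: it is $d(\sigma\circ\vel)$ along $\Delta_{X_\bullet}$, and the normal direction $\partial_h$ is sent to the fibre direction of $\O(1)$ because $\partial_h(hw)|_{h=0}=w\neq0$.

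\textbf{Main obstacle.} The hardest step is comparing the two local orientation signs, since a degree of $0$ rather than $\pm2$ would kill the argument. I would compute the signs in the model case of great circles and argue that they depend continuously on the data, hence agree in general. The relevant mechanism is that the involution $(t,s)\mapsto(s,t)$ on $X_\bullet\times_X X_\bullet$ covers the swap on $M\times M$, and that $v\mapsto -v$ on the fibre of $\O(1)$ combines with $h\mapsto -h$. If the sign bookkeeping turns out to be delicate, the fallback is to replace the degree argument: show that the image of $\Phi$ is closed (by compactness) and open near $P_0$ (by the local diffeomorphism property), and then combine this with Lemma~\ref{surjectivity of evaluation} applied to the subfamilies $D_m$ to cover the interior.
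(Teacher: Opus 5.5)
Your proposal is correct, and it differs from the paper's proof in two ways.

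\textbf{Smoothness.} The paper defines $\Phi$ set-theoretically and appeals to Boman's theorem. It checks only curves transverse to $\Delta_{X_\bullet}$, uses a L'H\^opital computation, and asserts the higher derivatives without proof. Your factorisation $\exp^{-1}_{\gamma_x(t)}\gamma_x(t+h)=h\,w(x,t,h)$ with $w(x,t,0)=\dot\gamma_x(t)$ instead writes $\Phi$ as an explicitly smooth map in local coordinates. This is shorter and handles all directions and all derivatives at once.

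\textbf{Surjectivity.} The paper's proof gives no argument for surjectivity, so your Step~2 supplies something genuinely missing.

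\textbf{The sign question in the degree route.} This is settled by a direct computation, not by your two proposed mechanisms:
\begin{enumerate}
\item Continuity from the great-circle model does not work, since there is no a priori path of families joining yours to it.
\item The swap $(t,s)\mapsto(s,t)$ does not work either: it fixes $\Delta_{X_\bullet}$ pointwise, so it never relates the two preimages.
\end{enumerate}
The two preimages are sent by $\vel$ to $(m,v)$ and $(m,-v)$, which differ by the deck involution of $\sigma$. That involution reverses the orientation of $\mathbf{S}M$, since it is the antipodal map on each $S^2$ fibre. At the same time, $d\Phi(\partial_h)=\dot\gamma$ switches from $v$ to $-v$ in the tautological fibre over $[v]$. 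The two sign changes cancel, so both local degrees agree and $\deg\Phi=\pm 2$. This uses that the blow-up is orientable, which holds because the normal rank $3$ is odd. The value $\pm2$ is exactly the count used in Lemma~\ref{square bound}.

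\textbf{Your fallback already suffices on its own.} The open/closed step is not needed.
\begin{enumerate}
\item In the flow-like case $\ev=p\circ\vel$ is a submersion, so $\ev^{-1}(m)$ is a smooth surface.
\item Since $d\ev(\partial_t)=\dot\gamma\neq 0$ and knots are embedded, $\pi$ maps $\ev^{-1}(m)$ injectively and immersively onto $D_m$. Hence $D_m$ is a smooth compact holomorphic curve, using Lemma~\ref{tambovian properties}.
\item Lemma~\ref{surjectivity of evaluation}, applied to the subfamily over $D_m$ (Proposition~\ref{subfamilies}), shows that every $m'\neq m$ lies on a knot through $m$. So $\ev^{(2)}$ already covers $M\times M\setminus\Delta_M$.
\item Flow-likeness makes $\sigma\circ\vel$ onto $P_0$.
\end{enumerate}
Together these give surjectivity of $\Phi$ directly.
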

\begin{proof}
Set-theoretically this map is already determined: it coincides with $\ev^{(2)}$ on the complement of diagonal  $\Delta_{X_{\bullet}}$ and on the diagonal maps $(\gamma_x, t, t) \mapsto (\gamma_x(t),[\dot{\gamma_x}(t)])\in \mathbf{P}M$.

The only thing that is needed is to check smoothness along the  diagonal. By Boman's theorem it is enough to check it on germs of smooth curves.

Let $c \colon {]}{-\epsilon; \epsilon}{[} \to X_{\bullet} \times_{X} X_{\bullet}$ be a germ of a smooth path such that $c(0)$ lies on diagonal $\Delta_{X_{\bullet}}$. We may assume that $c$ is transverse to $\Delta_{X_{\bullet}}$ and does not intersect it outside $c(0)$, otherwise the smoothness follows from smoothness of $\vel$ and $\sigma$. 

Let $c(\lambda)=(\gamma_{\lambda}, t_{\lambda}, s_{\lambda})$. By our assumption, $t_0=s_0=t$. Let 
\[
(p_{\lambda}, q_{\lambda}):=\Phi(c(\lambda))=(\gamma_{\lambda}(t_{\lambda}), \gamma_{\lambda}(s_{\lambda})) \in M \times M.
\]
The isomorphism $\phi \colon \mathring{U}(\Delta_M) \to \mathring{V}(P_0)$ allows us to write down a point $(p,q)$ in a neighbourhood of $\Delta_M$ as a point $(m, [v], r) \in V(P_0)$, where $(m, [v])$ is a point in $\mathbf{P}M$ and $r \in \R\setminus \{0\}$ is the normal parameter. We are allowed to choose $\phi$ in such a way that $m=p$. Then, after applying diffeomorphism $\phi$, we get
\[
\Phi(\gamma_{\lambda}, t_{\lambda}, s_{\lambda})=(p_{\lambda}, [v_{\lambda}], r_{\lambda})
\]
and $r_{\lambda}v_{\lambda}=(s_{\lambda}-t_{\lambda})\dot{\gamma_{\lambda}}(t_{\lambda})+o(s_{\lambda}-t_{\lambda})$. Here $(s_{\lambda}-t_{\lambda})$ is the distance between $s_{\lambda}$ and $t_{\lambda}$ taken with a sign depending on whether the shortest path between them goes along the orientation of the knot, or in the opposite direction.

As $\lambda \to 0$ from either above to below, $r_{\lambda} \to 0$ and $(s_{\lambda}-t_{\lambda}) \to 0$. By L'H\^{o}pital rule,
\[
\lim_{\lambda \to 0}\frac{r_{\lambda}v_{\lambda}}{r_{\lambda}} = \lim_{\lambda \to 0}\frac{(s_{\lambda}-t_{\lambda})\dot{\gamma_{\lambda}}(t_{\lambda})+o(s_{\lambda}-t_{\lambda})}{s_{\lambda}-t_{\lambda}},
\]
thus $\lim v_{0}=[\dot{\gamma_{0}}(t_0)]$ and the map is indeed continious.

The derivative of $\frac{d}{d\lambda}|_{\lambda=0}\Phi(c(\lambda))$ is $\frac{d}{d\lambda}|_{\lambda=0}(s_{\lambda}-t_{\lambda})\cdot \dot{\gamma_{0}}(t)$ is smooth in $\lambda$, and the same is for higher derivatives.
\end{proof}

\begin{proof}[Proof of Lemma \ref{square bound}]
Degree of $\ev^{(2)}$ equals degree of $\Phi$. 

Let $(m,[v])\in P_0$ be the point on the exceptional locus of $\Bl_{\Delta_M}(M \times M)$. It has exactly two preimages under $\Phi$. We need to check that the differential of $\Phi$ is surjective at these preimages. This would mean that that $\Phi$ is a 2-to-1 cover in a neighbourhood of $(m, [v])$. Since the number of preimages under the map $\ev^{(2)}$ is constant on a dense open subset of $M \times M$, this would be enough. 

Let $(\gamma, t, s) \in \Delta_{X_{\bullet}}$ be a point from preimage of $(m, [v])$. The tangent space $T_{(\gamma, t, s)}(X_{\bullet}\times_X X_{\bullet})$ is generated by the susbspace $T_{(\gamma, t, s)}\Delta_{X_{\bullet}}$ and two vectors $\frac{\di \gamma}{\di t}$ and $\frac{\di \gamma}{\di s}$ (there is exactly one linear relation, namely $ \frac{\di \gamma}{\di t}+ \frac{\di \gamma}{\di s} \in T_{(\gamma, t, s)}\Delta_{X_{\bullet}}$).

The differential $d\Phi$ is an isomorphism on the subspace $T_{(\gamma, t, s)}\Delta_{X_{\bullet}}$ since there it coincides with $d\vel$. At the same time, $d\Phi(\frac{\di \gamma}{\di t})$ and $d\Phi(\frac{\di \gamma}{\di s})$ are non-zero because the evaluation map is immersive on $\gamma$. By dimension count $d\Phi$ is surjecive at point $(\gamma, t, s)$.

\end{proof}

\subsection{Hirzebruch surfaces}

Our goal is classification of holomorphic  flow-like and flow-like real families parametrised by compact complex surfaces. It turns out that a base of such family necessary belongs to a certain kin of projective surfaces known as \emph{Hirzebruch surfaces}. We briefly recall their main properties. The primary reference is \cite[Chapter V, Section 4]{BHPV}.

\begin{df}
Let $\O_{\CP^1}(-1)$ denote the anti-tautological bundle on $\CP^1$ and $\O_{\CP^1}(-n)$ its $n$-th tensor power. The \emph{$n$-th Hirzebruch surface} is $\F_n:=\mathbf{P}(\O_{\CP^1} \oplus \O_{\CP^1}(-n))$.
\end{df}
The surface $\F_n$ is a smooth projective surface that is an holomorphic $\CP^1$-bundle over $\CP^1$. All Hirzebruch surfaces are rational. The surface $\F_0$ is isomorphic to $\CP^1 \times \CP^1$. For $n \neq \pm m$ the surfaces $\F_n$ and $\F_m$ are birational, but not biholomorphic. They are homeomorphic as smooth manifolds if and only if $n$ and $m$ have the same parity. The surface $\F_1$ is isomorphic to the blow-up of $\CP^2$ in a point. 

There is a section $\CP^1 \to \Sigma_n$ that is given by sending a point to the fibre of of the line bundle $\O_{\CP^1}(-n)$. Its image $\Sigma$ is an holomorphic rational curve with self-intersection $[\Sigma]^2=-n$. It is the only irreducible holomorphic curve on $\F_n$ with negative self-intersection. 

Since $\F_n$ is rational, $H^1(\F_n, \Z)=H^3(\F_n, \Z)=0$. The group $H^2(\F_n, \Z)$ is generated by the class $f$ of the fibre of the map $\F_n \to \CP^1$ and the class $s$ of $\Sigma$. The intersection pairing in these generators is given by the matrix
\[
\begin{pmatrix} 0 & 1 \\ 1 & -n \end{pmatrix}.
\]

Vice versa, every rational surface with $b_2=2$ is a Hirzebruch surface.

Recall that on a smooth projective surface the intersection number of two distinct complex curves is always non-negative.

\begin{prop}\label{intersection on hirzebruch prop}
Let $\F_n$ be a Hirzebruch surface, and $D \subseteq \F_n$ a smooth curve. Suppose that $0<[D]^2 \le 2$. Then either $n=0$ and $[D]=(f+s)$ or $n=2$ and $[D]=(2f+s)$.
\end{prop}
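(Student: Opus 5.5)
The plan is to write the class of $D$ in the basis $(f,s)$ of $H^2(\F_n,\Z)$ and reduce everything to arithmetic with the intersection matrix. Set $[D]=af+bs$ with $a,b\in\Z$. The intersection form $\begin{pmatrix}0&1\\1&-n\end{pmatrix}$ gives
\[
[D]^2 = 2ab - nb^2 = b(2a-nb).
\]
I will use the fact recalled above that distinct irreducible curves meet non-negatively. Since the statement is about a smooth curve, I treat $D$ as connected and hence irreducible. (If disconnected curves had to be allowed, I would run the same argument on each component and use that components are disjoint.)

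First I would record positivity constraints. Through every point passes a fibre, and $f$ is the class of a fibre, so either $D$ is a fibre or $D\cdot f=b\ge 0$. A fibre has square $0$, which is excluded, so $b\ge 0$. Since $[D]^2>0$, in fact $b\ge 1$. Next, $\Sigma$ is the unique irreducible curve of negative self-intersection and $D^2>0$, so $D\neq\Sigma$. Hence $D\cdot\Sigma = a-nb\ge 0$, that is $a\ge nb$.

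Now I would do the case analysis. The factor $2a-nb$ satisfies $2a-nb \ge 2nb-nb = nb\ge 0$. Since $b\ge1$ and $b(2a-nb)\le 2$, we get $b\in\{1,2\}$.

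If $b=2$, then $[D]^2=2(2a-2n)=4(a-n)$. This is a multiple of $4$ and so cannot lie in $\{1,2\}$, so this case is impossible.

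If $b=1$, then $[D]^2=2a-n\in\{1,2\}$, and $a\ge n$ gives $n\le 2a-n\le 2$.
\begin{itemize}
\item When $2a-n=2$, $n$ is even, so $n\in\{0,2\}$. This gives $a=1$, $[D]=f+s$ on $\F_0$, or $a=2$, $[D]=2f+s$ on $\F_2$. These are exactly the two cases in the statement.
\item When $2a-n=1$, $n$ is odd, so $n=1$ and $a=1$. Then $[D]=f+s$ on $\F_1$, the pullback of a line under $\F_1=\mathrm{Bl}_{pt}\CP^2$, which has square $1$.
\end{itemize}

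The main obstacle is this last case, which appears to be a genuine exception to the proposition as literally stated. A line in $\CP^2$ avoiding the blown-up point gives a smooth curve of this class with square $1$. So the argument actually establishes the conclusion under the hypothesis $[D]^2=2$. That is precisely the situation supplied by Lemma \ref{square bound} for pinned curves, which is how the proposition will be used. I would therefore either state the result with $[D]^2=2$, or add $(n,[D])=(1,f+s)$ as a third possibility and rule it out later using $[D_m]^2=2$.

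The only other point I would double-check is the claim that $\Sigma$ is the unique negative curve. This fails for $n=0$, but there $a\ge nb$ just reads $a\ge0$, which follows from $D\cdot s\ge 0$ because $s$ is also a ruling class on $\F_0$.
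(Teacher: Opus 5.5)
Your proof is correct and follows the same route as the paper. Both write $[D]=af+bs$, get $a\ge nb$ from $D\cdot\Sigma\ge 0$ together with $b\ge 1$, factor $[D]^2=b(2a-nb)$, exclude $b=2$ by divisibility by $4$, and solve the case $b=1$. Your use of $D\cdot f=b\ge 0$ to get $b\ge 1$ is cleaner than the paper's argument, which only disposes of $a=0$ and $b=0$.

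Your exception is genuine. On $\F_1$ the class $f+s$ of a line avoiding the blown-up point has square $1$, so the proposition is false as written when $[D]^2=1$. The paper's proof silently assumes $[D]^2=2$ when it sets $a=\frac n2+1$. The statement should therefore carry the hypothesis $[D]^2=2$, which is exactly what Lemma~\ref{square bound} supplies in the application.
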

\begin{proof}
Let $[D]=(af+bs)$. First of all, $a >0$, otherwise $[D]^2=b^2s^2=-b^2n^2$ is negative.

Since $D$ is smooth and distinct from $\Sigma$, the product $[D].[\Sigma]$ is non-negative. Thus
\[
(af+bs)\cdot s=a-nb \ge 0,
\]
which implies $a \ge nb$. 

Observe that $b>0$ (otherwise $[D]^2=0$). Therefore we conclude that $a \ge n$.

At the same time,
\[
[D]^2=(af+bs)^2=2ab-nb^2=(2a-bn)b.
\]

If $[D]^2=1$, then $b=1$ Thus $b = 1$ or $b = 2$. If latter, the right-hand side is divisible by $4$. Thus $b = 1$, and $a=\frac n2+1\geqslant n$. Therefore $n = 0, a = 1$ or $n = 2, a = 2$.
\end{proof}

\subsection{Flow-like and real flow-like families}

In this subsection, we prove our classification of flow-like and real flow-like families of knots.

\begin{thrm}\label{flowlike thrm}
    Suppose $\mathcal{X}$ is a flow-like  compact holomorphic family of knots in surface of knots in $(M, [g])$. Then the base $X$ of $\mathcal{X}$ is biholomorphic to either $\CP^1 \times \CP^1$ and $M$ is diffeomorphic to a lens space.
\end{thrm}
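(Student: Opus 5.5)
The plan is to read off the topology of $X$ from the velocity map, identify $X$ among rational surfaces with $b_2=2$, rule out $\F_2$ geometrically, and then deduce the topology of $M$ from homotopy theory. By the preceding proposition, $X$ is a compact rationally connected projective surface, hence a rational surface, so $H^1(X)=H^3(X)=0$ and $\pi_1(X)=0$. By Proposition~\ref{flow-like prop}, $\vel\colon X_\bullet\to\mathbf{S}M$ is a continuous bijection of compact spaces, hence a homeomorphism. Since $\mathbf{S}M\simeq S^2\times M$ by Stiefel's theorem, we get $X_\bullet\simeq S^2\times M$, and $M$ is compact because $X_\bullet$ is. From here I would compare the Gysin sequence of the oriented circle bundle $\pi\colon X_\bullet\to X$ with the Künneth decomposition of $H^*(S^2\times M)$.

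The first key step is to show that the Euler class $e\in H^2(X,\Q)$ is nonzero. Over a pinned curve $D_m$, the bundle $\pi^{-1}(D_m)\to D_m$ has the section $\widetilde D_m=\ev^{-1}(m)$, so $e\cdot[D_m]=0$. If $e$ vanished rationally, then $H^*(X_\bullet;\Q)\cong H^*(X)\otimes H^*(S^1)$, so $b_1(X_\bullet)=1$ and $b_2(X_\bullet)=b_2(X)$. I would then compare this with the Künneth numbers of $S^2\times M$, using Poincaré duality on $M$ and the Euler characteristic constraint, to reach a contradiction. If this comparison turns out not to be decisive, I would instead argue via Lemma~\ref{square bound}. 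Since $[D_m]^2=2$ and $H^2(X)$ carries a nondegenerate form of signature $(1,b_2-1)$, the vanishing $e\cdot D=0$ forces $e$ into the negative definite part. I would then compute $e$ directly from the section along $D_m$, and this is the step I am least sure of.

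Once $e\neq0$ is established, the Gysin sequence gives $H^1(X_\bullet;\Q)=0$, so $b_1(M)=0$, and hence $b_2(M)=0$ by Poincaré duality. It also gives $H^2(X_\bullet;\Q)=H^2(X;\Q)/\Q e$. Since $b_2(S^2\times M)=1$, this yields $b_2(X)=2$. A rational surface with $b_2=2$ is a Hirzebruch surface $\F_n$. A generic pinned curve $D_m$ is smooth by Lemma~\ref{tambovian properties}, and by Lemma~\ref{square bound} it satisfies $[D_m]^2=2$. Proposition~\ref{intersection on hirzebruch prop} then leaves two cases: $n=0$ with $[D]=f+s$, or $n=2$ with $[D]=2f+s$.

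To exclude $n=2$, observe that $(2f+s)\cdot s=2-2=0$, so $[D_m]\cdot[\Sigma]=0$ for every $m$. Now pick a point $\sigma\in\Sigma$, which corresponds to a knot $\gamma_\sigma$, and a point $m\in\gamma_\sigma$ chosen generically, so that $D_m$ is smooth and irreducible. Then $\sigma\in D_m\cap\Sigma$, and $D_m\neq\Sigma$ because their self-intersections are $2$ and $-2$. Two distinct irreducible curves that meet have positive intersection number, which contradicts $[D_m]\cdot[\Sigma]=0$. Hence $X\simeq\F_0=\CP^1\times\CP^1$.

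For the topology of $M$, I would use the homotopy sequence of the fibration $S^1\to X_\bullet\to X$. Since $\pi_1(X)=0$, the map $\Z=\pi_1(S^1)\to\pi_1(X_\bullet)$ is surjective, so $\pi_1(M)=\pi_1(S^2\times M)$ is cyclic. It is finite because $b_1(M)=0$. By elliptisation (Perelman), $M$ is then a quotient of $S^3$ by a finite cyclic group, that is, $S^3$ or a lens space.

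I expect the main obstacle to be the nonvanishing of $e$, that is, controlling the Gysin sequence precisely enough to get $b_2(X)=2$. Everything after that step is a direct combination of the earlier results.
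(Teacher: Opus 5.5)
\textbf{The gap is at the step you flagged: neither of your two strategies can prove $e\neq 0$.}

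The Betti-number comparison cannot decide it, because the case $e=0$ is internally consistent. Take $X=\CP^1\times\CP^1$, $X_\bullet=X\times S^1$ and $M=S^2\times S^1$. Then $X_\bullet\cong S^2\times M$ on the nose, so no comparison of Betti numbers or Euler characteristics of $X_\bullet$ and $S^2\times M$ can exclude it.

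Your fallback does not address $e=0$ at all. The section $\widetilde D_m$ only yields $e\cdot[D_m]=0$, which is automatic when $e=0$. The Hodge index theorem only says that a \emph{nonzero} $e$ would satisfy $e^2<0$. The configuration above, with $[D_m]=f+s$ and $[D_m]^2=2$, meets every constraint coming from intersection theory on $X$ and from the cohomology of $X_\bullet\simeq S^2\times M$. So any proof that $e\neq 0$ must use the geometry of $\ev$.

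\textbf{The missing ingredient is the degree argument in the paper's Case~1.} There the paper assumes $c_1=0$, deduces $\pi_1(M)=\Z$ and $M\cong S^2\times S^1$, and reaches a contradiction. The same argument can be fed directly into your Gysin computation, without identifying $M$:
\begin{enumerate}
\item For generic $m$, the curve $D_m\cong\CP^1$ is smooth, and $(D_m)_\bullet=\pi^{-1}(D_m)\to D_m$ has the section $\widetilde D_m$. Hence $(D_m)_\bullet\cong S^2\times S^1$, and $H_2((D_m)_\bullet;\Q)$ is spanned by $[\widetilde D_m]$.
\item Since $\ev$ collapses $\widetilde D_m$ to the point $m$, the map $\ev_*$ is zero on $H_2$.
\item On the other hand, $\ev\colon (D_m)_\bullet\to M$ has nonzero degree, because $\int_{(D_m)_\bullet}\ev^*\nu_g=\int_{D_m}\omega_g>0$ by Theorem~\ref{kaehlerness of families}. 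A nonzero-degree map between closed oriented manifolds is surjective on rational homology.
\item Therefore $b_2(M)=0$. By Poincar\'e duality $b_1(M)=0$, so $b_1(X_\bullet)=0$, and the degree-one part of the Gysin sequence forces $e\neq 0$.
\end{enumerate}

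With this step inserted, your proof is complete. The remaining steps compare with the paper as follows. Your Gysin computation of $b_2(X)=2$ is a rational-cohomology replacement for the paper's $\pi_2$ bookkeeping in the exact sequence~(\ref{exact}). Your exclusion of $\F_2$, via a knot of $\Sigma$ through $m$ producing a point of $D_m\cap\Sigma$, is the same positivity-of-intersection argument the paper uses.
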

\begin{proof} 
    The flow-like condition guarantees that the velocity map $\vel \colon X_{\bullet} \to \mathbf{S}M$ is a diffeomorphism.

    Each pinned curve $D_m$ is mapped isomorphically to $S_m \simeq \CP^1$ via the velocity map. Thus, all the pinned curves are rational. This implies that $X$ is rationally connected, and therefore rational by the Enriques--Kodaira classification \cite[Chapter VI]{BHPV}. In particular, $\pi_1(X)$ is trivial. 
    
    The exact sequence of homotopy groups for the fibration $X_\bullet \to X$ reads: 
    \begin{equation}\label{exact}
    0 \to \pi_2(X_\bullet) \to \pi_2(X) \arr{c_1} \pi_1(S^1) \to \pi_1(X_\bullet) \to 1.
    \end{equation}
We treat separately two cases, depending on whether $c_1=0$ or not.
\\

\textbf{Case 1.} 
    If the fibration admits a trivialisation, $c_1 = 0$ and $\pi_1(X_\bullet) = \Z$. Flow-likeness implies that $X_\bullet \cong M \times S^2$, hence $\pi_1(M) = \Z$ and $M = S^2 \times S^1$ (see e.g. \cite[Proposition 3.4]{Hatcher}). Let $D_m \subset X$ be a pinned curve and $(D_m)_{\bullet}$ its preimage in $X_{\bullet}$. Then $(D_m)_{\bullet}$ is also isomorphic to $S^2 \times S^1$ and $\ev \colon (D_m)_{\bullet}\to M$ is surjective. It is moreover of non-zero degree, since the Radon transform of the volume form on $M$ integrates positively over $D_m$. At the same time, the sphere $\widetilde{D_m} \subset (D_m)_{\bullet}$ that corresponds to the natural section $\gamma \mapsto \gamma^{-1}(m)$ is mapped to a point. Thus, $\ev_{*}$ is not surjective on $H^2$, and by Poincar\'e duality $\ev^*$ is not injective on $H^2$. Together with formula
    \[
    f_*f^*\alpha = \deg(f)\alpha
    \]
    for any $\alpha \in H^{\bullet}(M, \Q)$ this leads to a contradiction.
    \\

    \textbf{Case 2.} In this case, $H_2(X,\Z) \cong \pi_2(X) \cong \pi_2(S^2 \times S^1 \times S^2) = \Z^2$. A rational surface with $b_2 = 2$ is a Hirzebruch surface, see \cite[Chapter VI]{BHPV}.

    If the fibration has degree $d \neq 0$, it follows that $\pi_1(M) \cong \Z/d$, and thus $M$ is a lens space by Perelman--Hamilton solution of Thurston Geometrisation Conjecture. In this case, $H_2(X,\Z) \cong \pi_2(X) \cong \pi_2(X_\bullet) \oplus \ker c_1 \cong \pi_2(M \times S^2) \oplus \Z \cong \Z^2$. Thus $b_2(X) = 2$ regardless. 

    For a generic $m$, the pinned curve $D_m$ is a smooth curve with self-intersection equal to $2$ (Lemma \ref{square bound}). By Proposition \ref{intersection on hirzebruch prop} there are only two cases: either $X \cong \F_0 \cong \CP^1 \times \CP^1$, or $X \cong \F_2$. 

    Suppose $X \cong \F_1$. Then $[D_m]=(2f+s)$ and $[\Sigma].[D_m] =s(2f+s)=0$. At the same time, this number equals the class of the fibre  $\ev^{-1}(m)$ for the evaluation map of the family $\Sigma_{\bullet} \to M$. This again contradicts this map having non-zero degree.
\end{proof}
    
\begin{thrm}\label{flowlike real thrm}
    Suppose $X$ is a flow-like real holomorphic surface of knots in $(M,[g])$. Then $M$ is diffeomorphic to either $S^3$ or $\R\mathbf{P}^3$, $X$ is biholomorphic to $\CP^1 \times \CP^1$, $[g]$ is the conformal class of the round metric, and $X$ is a surface of geodesics for some round metric in this class.
\end{thrm}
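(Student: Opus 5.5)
By Theorem \ref{flowlike thrm}, we already know $X\simeq \CP^1\times\CP^1$, $M$ is a lens space $S^3/(\Z/d)$, and the pinned curves $D_m$ are smooth rational curves of class $f+s$ with $[D_m]^2=2$. The plan is to use realness to cut $d$ down to $1$ or $2$, and then to show that the family is conformally the family of great circles. The conformal identification will go through Theorem \ref{two-complex-structures-proposition}.

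\textbf{Step 1: topology.} Realness gives an antiholomorphic involution $\sigma$ on $X$ covered by orientation reversal on the knots. Under the flow-like identification $X_\bullet\cong \mathbf{S}M$ it lifts to the antipodal map $(m,\xi)\mapsto(m,-\xi)$ on $\mathbf{S}M$, composed with reversing fibres. In particular, $\sigma$ preserves every pinned curve $D_m$ and acts on $D_m\cong S_m$ as the antipodal map. Hence $\sigma$ acts on $D_m$ without fixed points, and therefore it fixes no point of $X$, since every point lies on some $D_m$.

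I then examine the action of $\sigma$ on $H^2(X,\Z)=\Z f\oplus\Z s$. It preserves $[D_m]=f+s$ and reverses orientation on complex curves, so $\sigma^*[D_m]=-[D_m]$. The fixed-point-free antiholomorphic involutions of $\CP^1\times\CP^1$ are, up to conjugacy, (antipodal)$\times$(antipodal) and (antipodal)$\times$(conj); the swap type $(z,w)\mapsto(\bar w^{-1},\bar z^{-1})$ also occurs. I expect the constraint $\sigma^*(f+s)=-(f+s)$ to single out the involutions that reverse both rulings.

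To relate $d$ to this, I would compare the Euler class of the circle bundle $X_\bullet\to X$ with the quotient. The bundle $X_\bullet$ is pulled back from $X/\sigma$ together with the twist by orientation reversal. From the homotopy sequence (\ref{exact}), $d$ is the divisibility of $c_1$ on $\pi_2(X)$, and $c_1$ is $\sigma$-anti-invariant. Combining this with $c_1(D_m)=\pm 2$ (the restriction of $\mathbf{S}M$ to a fibre $S_m$ being the unit tangent bundle of $S^2$, up to sign), I aim to obtain $d\in\{1,2\}$, i.e.\ $M\cong S^3$ or $\R\mathbf{P}^3$.

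\textbf{Step 2: geometry.} On the universal cover $S^3$ I consider the lifted family, which is again flow-like, real and holomorphic. The key claim is that each knot is determined by two of its points: two generic pinned curves $D_{m_1},D_{m_2}$ meet in exactly $2$ points (Lemma \ref{square bound}), and realness pairs these points as $\gamma,\bar\gamma$. So through two generic points there passes a unique unoriented knot, as for geodesics.

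I would next build a projective-type structure. The knots behave like "lines": through a point and a direction there is a unique knot, and through two points a unique knot. Combined with the CR-holomorphicity of $\vel$ (Corollary \ref{cr holomoprhic criterion}) and Hitchin's Theorem \ref{hitchin thrm} read in reverse, this should show that the knots are the geodesics of a metric of constant curvature in $[g]$.

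Concretely, I would try to realise $X$ as the smooth quadric surface $H\cap Q$. The linear system $|f+s|$ embeds $X$ as a quadric in $\CP^3$, and $\sigma$ gives this quadric a real structure with no real points, matching the space of great circles. By the uniqueness of compatible conformal structure (Theorem \ref{two-complex-structures-proposition}), it then suffices to produce a diffeomorphism $M\to S^3$ (or $\R\mathbf{P}^3$) carrying the knots to great circles. The points of $M$ correspond to the pinned curves, which form a $3$-dimensional real family of $\sigma$-invariant conics in the class $f+s$. Such conics are cut out by the $\sigma$-real hyperplanes of signature $(4,0)$ through an appropriate real structure, and this set is a copy of $S^3$ (or its quotient). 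I would map $m\mapsto D_m$, show that this map is an open embedding and hence onto by compactness, and transport the conformal structure.

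\textbf{Main obstacle.} The main obstacle is this last identification: showing that the pinned curves are exactly hyperplane sections, rather than merely curves in the class $f+s$. In the class $f+s$ every smooth curve is a hyperplane section, since $|f+s|$ is the hyperplane class of the quadric in $\CP^3$. So the harder part is showing that the resulting map $M\to\{\text{real conics}\}\cong S^3$ is injective and that it intertwines $[g]$ with the round class via LeBrun's CR structure. I would first attempt this by matching $J_g$ on $S_m$ with the complex structure of $D_m$ through $\vel$, as in the proof of Theorem \ref{two-complex-structures-proposition}.
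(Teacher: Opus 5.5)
Your Step~2 is essentially the paper's argument. You embed $X$ by the linear system of pinned curves as a real quadric in $\CP^3$ without real points, send $m$ to the real plane cutting out $D_m$, note that knots go to projective lines, and finish with Theorem~\ref{two-complex-structures-proposition}. The appeal to Hitchin's theorem ``in reverse'' should be dropped: it presupposes that the knots are geodesics of some metric, which is exactly what has to be shown.

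\textbf{The gap is in Step~1.}

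\emph{The input $c_1(D_m)=\pm2$ is false.} The set $\widetilde{D}_m=\ev^{-1}(m)$ is a global section of $\pi^{-1}(D_m)\to D_m$. So the circle bundle is trivial over every pinned curve, and $c_1$ vanishes on $[D_m]=f+s$, i.e.\ $c_1(f)=-c_1(s)=\pm d$.

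\emph{The $\sigma$-anti-invariance of $c_1$ carries no information.} An antiholomorphic involution that swaps the rulings has the form $(z,w)\mapsto(\phi(w),\phi^{-1}(z))$, and it fixes the graph of $\phi$. So your fixed-point-free $\sigma$ must preserve both rulings, and not, as you wrote, possibly be of swap type. Then $\sigma_*=-\id$ on $H_2(X,\Z)$, and $\sigma^*c_1=-c_1$ holds for \emph{every} class. No bookkeeping with the Euler class alone will bound $d$.

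\textbf{The missing idea is homotopical.} By (\ref{exact}) and $\pi_1(X)=0$, the group $\pi_1(M)\cong\pi_1(X_\bullet)$ is cyclic and generated by the class of a knot. Moving $v$ to $-v$ inside the connected sphere $S_m$ gives a free homotopy from the knot through $(m,v)$ to the knot through $(m,-v)$. By realness the latter is the same knot reversed. Hence $2[\gamma]=0$, and $\pi_1(M)$ is $0$ or $\Z/2$. This is the paper's argument. In your quotient picture $\mathbf{P}M=X_\bullet/\sigma\to X/\sigma$, it is the fact that the fibre's monodromy reverses orientation, so the fibre class is conjugate to its inverse in the abelian group $\pi_1(\mathbf{P}M)$. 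That is information $c_1$ does not see.

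\textbf{An alternative that makes Step~1 unnecessary.} You can instead settle the obstacle you flagged, which the paper only asserts.
\begin{itemize}
\item Since $\ev$ is a submersion, the map $w\mapsto\delta_x(w)(t_0)$ identifies $T_xX/T_xD_m$ with the normal plane to $\gamma_x$ at $m=\gamma_x(t_0)$.
\item Moving $m$ in the direction $u$ deforms $D_m$ by the section of $N_{D_m/X}\cong\O_{\CP^1}(2)$ whose value at $x$ is $u$ modulo $\dot\gamma_x(t_0)$. For $u\neq0$ this section vanishes only at the two knots tangent to $\pm u$, so it is nonzero.
\item Since $D_m$ spans its plane $H_m$, restriction embeds first-order deformations of $H_m$ into $H^0(D_m,N_{D_m/X})$.
\item Hence $m\mapsto H_m$ is an immersion of $M$ into the real locus of the dual $\CP^3$. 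That real locus is $\R\mathbf{P}^3$, not $S^3$; it is nonempty because each $H_m$ is $\sigma$-invariant.
\item By compactness the immersion is a covering.
\end{itemize}
This yields $M\cong S^3$ or $\R\mathbf{P}^3$ directly, together with the diffeomorphism or double cover that your Step~2 requires.
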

\begin{proof}
    From the exact sequence (\ref{exact}) one sees that the fundamental group $\pi_1(M) \cong \pi_1(X_\bullet)$ is generated by that of the (image of the) fibre, that is, the class of the knot. In particular, if $X$ is real, this class equals minus itself, as the knots passing along the vectors $v$ and $-v$ at one point $m$ are homotopic and differ only by the orientation. This implies that either $\pi_1(M) = 0$ (and thus $M \cong S^3$) or $\pi_1(M) = \Z/2$ (and $M \cong \R\mathbf{P}^3$).
    
    We know that $X \cong \CP^1 \times \CP^1$ or $X \cong \Sigma_2$. Any fixed-point-free involution on the Hirzebruch surface $\Sigma_2$ preserves its negative-square section $\Gamma$, hence in the latter case $\mathcal{X}|_\Gamma$ is also a real family. The number of knots in this family which pass through a generic point $m \in M$ is given by the intersection number $[\Gamma].[D_m] = s(2f+s) = 0$, which contradicts Proposition \ref{surjectivity of evaluation}.

    Thus $X \cong \CP^1 \times \CP^1$ is the only possible case. Consider the embedding $\varphi \colon X \to \CP^3$ via the linear system of the pinned curves. It realises $X$ as a smooth real quadric without real points inside a complex projective space with the standard real structure. Let $(V,q)$ be the corresponding real quadratic space ($q$ is only defined up to a scalar multiple, and thus may be considered positive definite). Define the map $\delta \colon M \to \mathbf{P}(V^*)$ by sending $m \in M$ to the real plane which cuts out the corresponding pinned curve $D_m \subset X$. It is a diffeomorphism for $M \cong \R\mathbf{P}^3$ and a double cover for $M \cong S^3$. If $M \supset \gamma \in X$ is a knot from our family, $\delta(\gamma) \subset \mathbf{P}(V^*)$ is the set of all real planes in $V$ which pass through the point $\varphi(\gamma)$, that is, a projective line in $\mathbf{P}(V^*)$. The quadratic form $q$ induces a metric $\tilde{q}$ on $\mathbf{P}(V^*)$ (resp. its double cover) for which the family of straight lines (resp. great circles) is a surface of knots. The standard construction identifies this surface with $\mathbf{P}\{q(v,v)=0\} = \varphi(X)$, and, if the orientation on $V$ is chosen correctly, the line $\delta(\gamma)$ goes to the point $\gamma$. By Theorem \ref{two-complex-structures-proposition}, $\delta^*\tilde{q} \in [g]$.    
\end{proof}

\begin{rmk}
    LeBrun proved \cite[Section 4]{LeBrun84} that one can reconstruct a conformal structure on a Riemannian threefold from certain data on its CR twistor space---namely, a CR structure, its fibration by holomorphic $\CP^1$s, an anti-CR holomorphic involution preserving the fibration, and a distribution which LeBrun calls the ``CR contact structure'' (it is an odd-dimensional counterpart of the holomorphic contact structure on the Atiyah--Hitchin--Singer twistor space). In principle, one may employ this theorem to prove the above result. In particular, it explains why the assumption of the surface being preserved by the real structure on the space of knots is crucial.
\end{rmk}

\begin{rmk}\label{singular rmk}
    If our definition of an holomorphic family were slightly more liberal, it would not be possible to exclude $\F_2$ the way we did: one may happen that $\Sigma \subset \F_2$ is contracted into a point. There exists an embedding of $\F_2 \setminus \Sigma$ into the space of round circles in $S^3$ (it is the once-pinned locus inside the non-compact complex threefold of all circles, or, equivalently, the space of straight lines in $\R^3$). If one allows circles to degenerate into points, this would be the map from the Hirzebruch surface to a space of knots which contracts $\Sigma$ into a point, much like the standard map $\F_2 \to \CP^3$ whose image is a quadratic cone does.
\end{rmk}

We end with a natural question that remains unanswered.

\begin{q}
Suppose that $(M, [g])$ admits a compact holomorphic family of knots of complex dimension $2$. Does it mean that $M$ is a sphere or $\R \mathbb{P}^3$ and $[g]$ is the class of conformal metric? More generally, are there other examples of compact complex surfaces of knots except for those constructed in Proposition \ref{surfaces of circles}? 
\end{q}

\vspace{-2mm}

\bibliography{references.bib}{}
\bibliographystyle{alpha}

\noindent {\sc Rodion D\'eev \\
Universit\'e libre de Bruxelles, Boulevard du Triomphe, CP262, 1050, Ixelles, Belgium} \\
\tt rodiondeev@ulb.be
\noindent {\sc Vasily Rogov \\
Universit\'e libre de Bruxelles, Boulevard du Triomphe, CP262, 1050, Ixelles, Belgium} \\
\tt vasirog@gmail.com

\end{document}